%%%%%%%%%%%%%%%%%%%%%%%%%%
% The obstacle problem for a fractional Monge--Amp\`ere equation                
%%%%%%%%%%%%%%%%%%%%%%%%%%

\documentclass[11pt]{amsart}
\usepackage[english]{babel}
\usepackage{amsmath,amscd,amssymb,amsthm,amsxtra}
\usepackage[mathscr]{eucal}
\usepackage{microtype}
\usepackage{mathtools}
\usepackage{accents}
\usepackage{color,graphicx,esint}
\usepackage{epsfig,epstopdf}
\usepackage{tikz-cd}
\usetikzlibrary{matrix,arrows,decorations.pathmorphing}
\usepackage[margin=1.0in]{geometry}
\usepackage{enumerate}
\usepackage{xfrac}

%\date{}

\DeclareMathAlphabet{\mathpzc}{OT1}{pzc}{m}{it}

\newcommand{\R}{\mathbb{R}}

\renewcommand{\S}{\mathbb{S}}

\def\vep{\varepsilon}
\def\vth{\vartheta}
\def\vphi{\varphi}
\def\1{{\bf 1}}
\def\e{\mathrm{e}}

\def\Nhd{\mathcal{N}}
\def\D{\mathcal{D}_s}
\def\De{\mathcal{D}_s^{\varepsilon}}

\DeclareMathOperator{\diag}{diag}
\DeclareMathOperator{\diam}{diam}
\DeclareMathOperator{\dist}{dist}

\DeclareMathOperator{\Id}{Id}
\DeclareMathOperator{\Lip}{Lip}
\DeclareMathOperator{\LSC}{LSC}

\DeclareMathOperator{\PV}{P.V.}

\DeclareMathOperator{\SC}{SC}
\DeclareMathOperator{\trace}{tr}
\DeclareMathOperator{\USC}{USC}

\newtheorem{thm}{Theorem}[section]
\newtheorem{prop}[thm]{Proposition}

\newtheorem{lem}[thm]{Lemma}

\theoremstyle{definition}
\newtheorem{defn}[thm]{Definition}
\newtheorem{rem}[thm]{Remark}

\numberwithin{equation}{section}

\allowdisplaybreaks

\author[Y. Jhaveri]{Yash Jhaveri}

\address{School of Mathematics\\
Institute for Advanced Study\\
1 Einstein Drive\\
Princeton, NJ \\
08540 USA}
\email{yjhaveri@ias.edu}

\author[P. R. Stinga]{Pablo Ra\'ul Stinga}

\address{Department of Mathematics\\
Iowa State University\\
396 Carver Hall, Ames\\
IA 50011, USA}
\email{stinga@iastate.edu}

\thanks{The first author was partially supported by ERC grant ``Regularity and Stability in Partial Differential Equations
(RSPDE)''.
The second author was supported by Simons Foundation grant 580911
and by grant MTM2015-66157-C2-1-P, MINECO/FEDER, UE, from Government of Spain}

\keywords{Free boundary problem, regularity fractional Monge--Amp\`ere equation, degenerate elliptic nonlinear nonlocal equation}

\subjclass[2010]{Primary: 35J96, 35R35, 35R11. Secondary: 35B65, 35J60, 47G20}

\begin{document}

%%%%%%%%%%%%%%%%%%%%%%%%%%%%%%%%%%%%%%%%%%%%%%%%%%%%%%
\title[The obstacle problem for a fractional Monge--Amp\`ere operator]{The obstacle problem for a \\ fractional Monge--Amp\`ere equation}
%%%%%%%%%%%%%%%%%%%%%%%%%%%%%%%%%%%%%%%%%%%%%%%%%%%%%%

%%%%%%%%%%%%%%%%%%%%%%%%%%%%%%%%%%%%%%%%%%%%%%%%%%%%%%
\begin{abstract}
We study the obstacle problem for a nonlocal, degenerate elliptic Monge--Amp\`ere equation.
We show existence and regularity of a unique classical solution to the problem and regularity of the free boundary.
\end{abstract}
%%%%%%%%%%%%%%%%%%%%%%%%%%%%%%%%%%%%%%%%%%%%%%%%%%%%%%

\maketitle

%%%%%%%%%%%%%%%%%%%%%%%%%%%%%%%%%%%%%%%%%%%%%%%%%%%%%%
\section{Introduction}
%%%%%%%%%%%%%%%%%%%%%%%%%%%%%%%%%%%%%%%%%%%%%%%%%%%%%%

Obstacle problems for nonlocal operators appear in
optimal control, mathematical finance, biology, and elasticity, among other applied sciences.
The regularity of solutions and free boundaries for this type of nonlinear problem for the fractional Laplacian was studied by Silvestre in \cite{Silvestre-CPAM},
and by Caffarelli--Salsa--Silvestre in \cite{Caffarelli-Salsa-Silvestre},
and for homogeneous, translation invariant, purely nonlocal uniformly elliptic operators by
Caffarelli--Ros-Oton--Serra in \cite{Caffarelli-Ros-Oton-Serra}.

In this paper, we investigate the following nonlocal obstacle problem:
\begin{equation}
\label{eq:obstacle problem}
\begin{cases}
\D u\geq u-\phi&\hbox{in}~\R^n\\
u\leq\psi&\hbox{in}~\R^n\\
\D u=u-\phi&\hbox{in}~\{u<\psi\}\\
\displaystyle\lim_{|x|\to\infty}(u-\phi)(x)=0.
\end{cases}
\end{equation}
Here the fractional Monge--Amp\`ere operator $\D$ is defined for $s\in (1/2,1)$ and $u:\R^n\to\R$, $n\geq1$, as
\begin{equation}
\label{eq:MAs}
\D u(x)=\inf_{A\in\mathpzc{M}}\frac{c_{n,s}}{2} \int_{\R^n}\frac{u(x+y)+u(x-y)-2u(x)}{|A^{-1}y|^{n+2s}}\,dy
\end{equation}
where $x\in\R^n$, $c_{n,s}>0$, and $\mathpzc{M}$ is
the class of all positive definite symmetric matrices $A$ of size $n\times n$ such that $\det A=1$. 
In addition, $\phi$ is a function in $C^{2,\sigma}(\R^n)$, $\sigma > 0$, that is strictly convex in compact sets
and asymptotically linear at infinity, and conditions on the obstacle $\psi$ are given below.

The nonlocal operator \eqref{eq:MAs} was first introduced by L.~A.~Caffarelli and F.~Charro in \cite{Caffarelli-Charro}
as a fractional analogue to the classical Monge--Amp\`ere operator. In fact, if $u$ is a convex $C^2$ function,
then it can be checked that
\begin{equation}\label{local}
n(\det D^2u(x))^{1/n}=\inf_{A\in\mathpzc{M}}\trace (A^2D^2u)(x).
\end{equation}
If, in addition, $u$ is asymptotically linear at infinity, then
\[
\lim_{s\to1}\D u(x)=n(\det D^2u(x))^{1/n}
\]
(see \cite{Caffarelli-Charro}).
Like its local counterpart \eqref{local}, the fractional operator \eqref{eq:MAs} is degenerate elliptic.
Indeed, matrices of the form $A=\diag(\vep,1/\vep)$, $\vep>0$, in dimension 2,
are in $\mathpzc{M}$, and they degenerate as $\vep\searrow0$.
Thus, the existence and regularity theory for nonlocal elliptic equations previously developed
in \cite{Caffarelli-Silvestre-ARMA, Caffarelli-Silvestre-CPAM, Caffarelli-Silvestre-Annals},
see also \cite{Kriventsov, Ros-Oton-Serra-Duke, Serra},
does not directly apply to equations involving \eqref{eq:MAs}.
Caffarelli and Charro considered the problem
\begin{equation}
\label{eq:v}
\begin{cases}
\D \bar{u}(x)=\bar{u}(x)-\phi(x)&\hbox{in}~\R^n\\
\displaystyle\lim_{|x|\to\infty}(\bar{u}-\phi)(x)=0
\end{cases}
\end{equation}
where $\phi$ is as above, and they showed 
existence of a unique, globally Lipschitz and semiconcave classical solution $\bar{u}$.
In addition, it was proved in \cite{Caffarelli-Charro} that $\bar{u}$ has the crucial property that $\bar{u}>\phi$ in $\R^n$.
This ultimately implies that $\D$, when acting on $u$, is locally uniformly elliptic
and, consequently, $\nabla\bar{u}$ is locally H\"older continuous.

In view of the comparison principle for \eqref{eq:v} and in
order to have a meaningful obstacle problem, we assume that the obstacle $\psi\in C^{2,1}(\R^n)$ is such that
\begin{equation*}
% \label{eq:psi}
\psi>\phi  \quad\hbox{in}~\R^n \quad\text{and}\quad \psi\leq \bar{u} \quad\hbox{in some compact set}~\mathcal{K}.
\end{equation*}
Here and in the remainder of this work, $\bar{u}$ denotes the solution to $\eqref{eq:v}$.
 
While the main tool in \cite{Guillen-Schwab} is the solution of a purely nonlocal and degenerate
elliptic obstacle problem, which stands as a replacement for the convex envelope in the fractional setting
(with the usual convex envelope being such an obstacle solution in the second order setting), we stress that the problem \eqref{eq:obstacle problem} is different in nature than the one in \cite{Guillen-Schwab}.
Obstacle problems for the local Monge--Amp\`ere equation \eqref{local}
were considered by Lee \cite{Lee} and Savin \cite{Savin}.
Our problem \eqref{eq:obstacle problem} can be seen as a parallel to \cite{Lee}.

Our first result establishes the existence and global regularity of a unique classical solution to \eqref{eq:obstacle problem}.
For the necessary notation, see section \ref{Section:Preliminaries}.

\begin{thm}
\label{thm: solution of obstacle problem}
There exists a unique classical solution $u$ to the obstacle problem \eqref{eq:obstacle problem}.
Moreover, $u$ is globally Lipschitz continuous and semiconcave with constants no larger than
\begin{equation}\label{Ms}
M_1 = \max \big\{[\phi]_{\Lip(\R^n)}, [\psi]_{\Lip(\R^n)}\big\} \quad\hbox{and}\quad M_2 = \max\big\{\SC(\phi), \SC(\psi) \big\},
\end{equation}
respectively, and the contact set $\{u=\psi\} \subset \mathcal{K}$ is compact. Furthermore,
\begin{equation}\label{eq:u bigger than phi}
u>\phi\quad\hbox{in}~\R^n.
\end{equation}
\end{thm}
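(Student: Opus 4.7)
The plan is to construct $u$ via a Perron-type method and derive the regularity and qualitative properties by comparing $u$ with suitably shifted or symmetrized competitors. Let $\mathcal F$ denote the class of upper semicontinuous $v\colon\R^n\to\R$ with $\D v \geq v-\phi$ in $\R^n$, $v\leq\psi$, and $\limsup_{|x|\to\infty}(v-\phi)(x)\leq 0$. Convexity of $\phi$ gives $\D\phi\geq 0=\phi-\phi$, and $\phi<\psi$ by hypothesis, so $\phi\in\mathcal F$ and the class is nonempty. The comparison principle of \cite{Caffarelli-Charro} for the unconstrained equation \eqref{eq:v} forces $v\leq\bar u$ for every $v\in\mathcal F$, so the candidate
\[
u(x) := \sup_{v\in\mathcal F} v(x)
\]
is well-defined with $\phi\leq u\leq\bar u$; in particular $(u-\phi)(x)\to 0$ as $|x|\to\infty$. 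Closure of $\mathcal F$ under pointwise maxima and semicontinuous envelopes, combined with a bump construction on $\{u<\psi\}$ at any point where the supersolution property might fail, would then show that $u$ is a classical solution of \eqref{eq:obstacle problem}. Uniqueness is immediate from the same comparison principle.

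The Lipschitz and semiconcavity bounds would follow from producing competitors in $\mathcal F$ that dominate $u$ from below by the desired amount. For Lipschitz: given $h\in\R^n$, the shifted function $v(x) := u(x+h) - M_1|h|$ satisfies, by the $M_1$-Lipschitz bounds on $\phi,\psi$ and translation invariance of $\D$,
\[
\D v(x) = \D u(x+h) \geq u(x+h)-\phi(x+h) = v(x)+M_1|h|-\phi(x+h) \geq v(x)-\phi(x),
\]
and $v(x)\leq\psi(x+h)-M_1|h|\leq\psi(x)$; using that $\phi$ is asymptotically linear, $\limsup_{|x|\to\infty}(v-\phi)\leq p\cdot h - M_1|h|\leq 0$, where $p$ is the asymptotic gradient of $\phi$. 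Hence $v\in\mathcal F$ and $v\leq u$ by maximality, i.e., $[u]_{\Lip(\R^n)}\leq M_1$. For semiconcavity, I would set $w(x):=\tfrac12(u(x+h)+u(x-h))-\tfrac{M_2}{2}|h|^2$; the concavity of $\D$ as an infimum of linear operators gives
\[
\D w(x)\geq\tfrac12\bigl(\D u(x+h)+\D u(x-h)\bigr)\geq\tfrac12\bigl(u(x+h)+u(x-h)\bigr)-\tfrac12\bigl(\phi(x+h)+\phi(x-h)\bigr),
\]
and the $M_2$-semiconcavity of $\phi$ and $\psi$ completes the check $w\in\mathcal F$, yielding $w\leq u$.

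For the qualitative statements: interpreting the hypothesis as $\psi>\bar u$ outside $\mathcal K$, the inequality $u\leq\bar u$ forces $u<\psi$ outside $\mathcal K$, so $\{u=\psi\}\subseteq\mathcal K$. Since $\phi\in\mathcal F$, $u\geq\phi$. If equality $u(x_0)=\phi(x_0)$ occurred, then $\psi(x_0)>u(x_0)$, placing $x_0\in\{u<\psi\}$, so $\D u(x_0)=u(x_0)-\phi(x_0)=0$. On the other hand, $u\geq\phi$ with equality at $x_0$ gives the pointwise inequality $u(x_0+y)+u(x_0-y)-2u(x_0)\geq\phi(x_0+y)+\phi(x_0-y)-2\phi(x_0)\geq 0$ for every $y$; integrating against any kernel $|A^{-1}y|^{-(n+2s)}$ preserves this and taking the infimum in $A$ yields $\D u(x_0)\geq\D\phi(x_0)$. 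Strict convexity of $\phi\in C^{2,\sigma}$ near $x_0$ then forces $\D\phi(x_0)>0$ (via the analogue of \eqref{local} for $\D$ on strictly convex functions established in \cite{Caffarelli-Charro}), contradicting $\D u(x_0)=0$.

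The principal technical obstacle is making the Perron machinery rigorous for the degenerate nonlocal operator $\D$: the comparison principle with a one-sided asymptotic, the stability of subsolutions under suprema and semicontinuous envelopes, and the bump construction all require care beyond the standard uniformly elliptic nonlocal setting, because $\mathcal M$ contains matrices that degenerate. Once these ingredients---extending the framework of \cite{Caffarelli-Charro}---are established, the regularity and qualitative properties follow from the comparison arguments sketched above.
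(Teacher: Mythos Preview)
Your Perron setup, the Lipschitz and semiconcavity bounds via shifted and symmetrized competitors, and the compactness of the contact set all match the paper. The genuine gap is the supersolution step. The standard bump construction you invoke to force $\D u=u-\phi$ on $\{u<\psi\}$ does \emph{not} go through for the degenerate operator $\D$, and you yourself flag this as the ``principal technical obstacle'' without indicating how to resolve it. Concretely, after lifting a lower test function by $d>0$ near a point $x'$, comparing the lifted and original test configurations produces an error of size
\[
d\int_{\R^n\setminus\mathcal N}\frac{dy}{|A^{-1}(y-x')|^{n+2s}},
\]
and this integral is not bounded uniformly over $A\in\mathpzc{M}$: for $A=\diag(\varepsilon,\varepsilon^{-1},\dots)$ it diverges as $\varepsilon\searrow 0$. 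Hence no choice of $d$ absorbs the error after taking the infimum in $A$, and the lifted function fails to be a subsolution for $\D$. This is precisely the obstruction the paper names in the introduction; it is not a matter of ``extending the framework'' of \cite{Caffarelli-Charro} but requires a new idea.

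The paper's remedy is to solve the obstacle problem for each truncated, uniformly elliptic operator $\D^\varepsilon$ (Theorem~\ref{thm: eps-obstacle problem}, where the bump \emph{does} work because the integral above is controlled by $\varepsilon$), obtain $u_\varepsilon$ with uniform Lipschitz and semiconcavity bounds, and set $u_0=\inf_{\varepsilon>0}u_\varepsilon$. The crucial and delicate step (Lemma~\ref{lem:uzero separated}) is to prove $u_0>\phi$ \emph{before} any supersolution property is available, via a contradiction that manufactures a direction $\e_0$ with $-(-\Delta)^s_{\e_0}\varphi(x_0)\leq 0$ for a strictly convex touching function $\varphi$. Only then does the separation $u_0>\phi$, combined with Theorem~\ref{thm:our-ellipticity}, give $\D=\D^\lambda$ locally on $u_0$ for some fixed $\lambda>0$, so that stability under the uniform limit $u_\varepsilon\to u_0$ yields the supersolution inequality (Lemma~\ref{lem: u0 supersolution}); finally $u=u_0$.

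Your direct argument for $u>\phi$---comparing second increments of $u$ and $\phi$ at a contact point and invoking $\D\phi(x_0)>0$---is correct once the equation $\D u(x_0)=0$ is known, but that equality is the supersolution property at $x_0$, which is exactly what cannot be obtained without the truncated-operator detour. In the paper's logical order the strict separation is an input to the supersolution proof, not a consequence of it.
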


The degenerate ellipticity of the fractional Monge--Amp\`ere operator \eqref{eq:MAs} prevents us from applying standard
techniques used to prove existence and uniqueness for nonlocal uniformly elliptic obstacle problems
\cite{Caffarelli-Ros-Oton-Serra, Silvestre-CPAM}. Therefore, to construct the solution $u$ to \eqref{eq:obstacle problem},
we need to devise a new strategy.
This is one of the main contributions of this paper.
To prove Theorem \ref{thm: solution of obstacle problem},
we consider a family of obstacle problems of the form \eqref{eq:obstacle problem}, but where $\D$ is replaced
by a truncated operator $\D^\vep$ (see section \ref{Section:Preliminaries}).
We build the solutions $u_\vep$ to such
uniformly elliptic nonlocal problems as the largest subsolution sitting below $\psi$ (see Theorem \ref{thm: eps-obstacle problem}), following the standard method.
More importantly, the key feature of the family of solutions $\{u_\vep\}_{\vep>0}$
is that it is uniformly globally Lipschitz continuous and semiconcave with constants no larger than
$M_1$ and $M_2$ (see \eqref{Ms}), respectively.
At this point, though, the degenerate ellipticity of $\D$ prevents us from applying the stability of viscosity solutions
under local uniform convergence.
Our crucial, delicate idea, which allows us to overcome this difficulty,
is to show that $u=\inf_{\vep>0}u_\vep$ remains strictly above $\phi$
(see Lemma \ref{lem:uzero separated}). See section \ref{Section:Existence} for details.

Next, we prove local H\"older estimates on $\nabla u$ outside of the contact set $\{u =\psi\}$
and across the free boundary $\partial\{u<\psi\}$.

\begin{thm}
\label{thm:regularity}
Let $u$ be the solution to the obstacle problem \eqref{eq:obstacle problem}.
\begin{enumerate}[(1)]
\item Let $\mathcal{O}$ be an open set and let $\mathcal{O}_\delta$, $\delta>0$, be a $\delta$-neighborhood of $\mathcal{O}$ such that
$\mathcal{O}_\delta\subset\subset\{u<\psi\}$.
There exists $\beta = \beta(n,s,\inf_{\mathcal{O}_\delta}(u -\phi),M_1,M_2) \in(0,1)$ such that $u \in C^{1,2s+\beta-1}(\mathcal{O})$ and
\[
\|u\|_{C^{1,2s+\beta-1}(\mathcal{O})}\leq C(1+\|u-\phi\|_{L^\infty(\R^n)}),
\]
where $C=C(\beta,\diam(\mathcal{O}))>0$.
\item Let $\mathcal{B}$ be a ball centered at the origin such that $\{u=\psi\}\subset\mathcal{B}$.
There exists $\tau=\tau(n,s,\inf_{4\mathcal{B}}(u -\phi),M_1,M_2) \in (0,1)$ such that
$u \in C^{1,\tau}(\mathcal{B})$ and
\[
\|u\|_{C^{1,\tau}(\mathcal{B})}\leq C(1+\|\psi\|_{C^{1,\tau}(\mathcal{B})}),
\]
where $C=C(\tau,\diam(\mathcal{B}))>0$.
\end{enumerate}
\end{thm}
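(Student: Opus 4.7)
My plan is to reduce both parts of the theorem to existing regularity theory for translation-invariant, concave, uniformly elliptic nonlocal equations and their associated obstacle problems, exploiting the strict separation $u>\phi$ established in Theorem \ref{thm: solution of obstacle problem}. The common first step is a \emph{quantitative uniform ellipticity lemma} in the spirit of Caffarelli--Charro \cite{Caffarelli-Charro}: I would show that if a globally Lipschitz (constant $M_1$), semiconcave (constant $M_2$), asymptotically linear function $v$ satisfies $v-\phi\geq\eta>0$ on $E\subset\R^n$, then for every $x\in E$ the infimum in \eqref{eq:MAs} is attained on a single compact set $\mathpzc{M}_\eta\subset\mathpzc{M}$ whose size depends only on $n,s,\eta,M_1,M_2$. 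The heuristic is that as an eigenvalue of $A$ degenerates, the kernel $|A^{-1}y|^{-(n+2s)}$ concentrates along one direction, and the semiconcavity of $v$ forces $L_Av(x)$ to tend to $-\infty$; therefore, in order to keep $\D v(x)\geq\eta>0$ one must discard such $A$'s. The resulting family $\{L_A\}_{A\in\mathpzc{M}_\eta}$ is then a uniformly elliptic collection of kernels in the Caffarelli--Silvestre class, and $\inf_{A\in\mathpzc{M}_\eta}L_A$ is a concave, translation-invariant nonlocal operator of order $2s$.

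For part (1), with $\eta=\inf_{\mathcal{O}_\delta}(u-\phi)>0$ (positive by hypothesis), the lemma would give that $u$ satisfies
\[
\inf_{A\in\mathpzc{M}_\eta}L_A u = u-\phi\quad\text{in}~\mathcal{O}_\delta,
\]
a concave uniformly elliptic nonlocal equation of order $2s$ with globally Lipschitz right-hand side. Interior Evans--Krylov and Schauder estimates for such equations, as developed by Caffarelli--Silvestre in \cite{Caffarelli-Silvestre-Annals} (see also refinements in \cite{Serra}), combined with the global $L^\infty$ bound $\|u\|_{L^\infty(\R^n)}\leq\|u-\phi\|_{L^\infty(\R^n)}+\|\phi\|_{L^\infty(\mathcal{O}_\delta)}$, would then yield $u\in C^{1,2s+\beta-1}(\mathcal{O})$ for some $\beta\in(0,2-2s)$ depending on $n,s,\eta,M_1,M_2$, together with the stated quantitative bound (using the buffer $\delta$ to place interior estimates on $\mathcal{O}$ from an equation on $\mathcal{O}_\delta$).

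For part (2), taking $\eta'=\inf_{4\mathcal{B}}(u-\phi)>0$, the lemma would again produce a concave, translation-invariant, uniformly elliptic family $\{L_A\}_{A\in\mathpzc{M}_{\eta'}}$ such that $u$ solves, in $\mathcal{B}$,
\[
\min\Bigl(\inf_{A\in\mathpzc{M}_{\eta'}}L_A u - (u-\phi),~\psi-u\Bigr)=0.
\]
This is precisely the setting of obstacle problems for homogeneous, translation-invariant, concave uniformly elliptic nonlocal operators with smooth obstacle treated by Caffarelli--Ros-Oton--Serra \cite{Caffarelli-Ros-Oton-Serra}. A direct application of their $C^{1,\tau}$ regularity up to and across the free boundary delivers the desired estimate in $\mathcal{B}$, with $\tau\in(0,1)$ determined by the ellipticity constants, hence by $n,s,\eta',M_1,M_2$. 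Here the buffer between $\mathcal{B}$ and $4\mathcal{B}$ plays the same role as $\delta$ did in part (1).

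The technical heart of the argument is the quantitative uniform ellipticity lemma: while Caffarelli--Charro establish a qualitative version along the specific solution $\bar u$, we need the dependence of $\mathpzc{M}_\eta$ and the associated ellipticity constants on $\eta,M_1,M_2$ to be made explicit so that the bounds propagate correctly into $\beta$ and $\tau$. A secondary and more routine issue is verifying that the family of kernels $\{|A^{-1}y|^{-(n+2s)}\}_{A\in\mathpzc{M}_\eta}$ satisfies the precise structural conditions---ellipticity bounds and symmetry---required to invoke the Caffarelli--Silvestre interior regularity in part (1) and the Caffarelli--Ros-Oton--Serra obstacle-problem regularity in part (2).
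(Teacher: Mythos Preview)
Your approach is essentially the paper's, and your quantitative uniform ellipticity lemma is precisely Theorem~\ref{thm:our-ellipticity}; part~(1) goes through as you outline (the paper subtracts $\phi$, writes $\inf_{A\in\mathpzc{M}_\lambda}\{L_A^s w+b_A(x)\}=0$ with $w=u-\phi$, and applies \cite[Theorem~1.3(b)]{Serra}).

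There is, however, a genuine subtlety in part~(2) that your phrase ``a direct application of \cite{Caffarelli-Ros-Oton-Serra}'' glosses over. The uniformly elliptic obstacle problem you write down holds only in $4\mathcal{B}$: outside that ball $u-\phi$ may approach zero and you lose control of the ellipticity constants, so $\D^{\eta'} u = u-\phi$ need not hold in the rest of $\{u<\psi\}$. The results in \cite{Caffarelli-Ros-Oton-Serra} are stated for obstacle problems posed globally, with the equation valid throughout the noncoincidence set, and the paper explicitly remarks that this prevents a black-box application. What rescues the argument is the \emph{global} Lipschitz and semiconcavity of $u$ from Theorem~\ref{thm: solution of obstacle problem}: setting $v=\psi-u$, one has $|\nabla v|\leq 1$ and $D^2v\geq -\Id$ on all of $\R^n$, which supplies enough tail control to invoke a localized $C^{1,\tau}$ estimate (isolated in the paper as Proposition~\ref{Proposition 2.4}, a rescaled form of an estimate from \cite{Caffarelli-Ros-Oton-Serra}) that requires the equation only in $\{v>0\}\cap B_R$ together with a global gradient growth bound. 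Concretely, one checks $M^+_\lambda(v-v(\cdot-h))\geq -|h|$ in $\{v>0\}\cap 2\mathcal{B}$ using the Lipschitz bound on the coefficients $c_A$. So your plan is right in spirit, but you should not expect to cite \cite{Caffarelli-Ros-Oton-Serra} as a ready-made theorem here; you must verify the hypotheses of the appropriate localized estimate, and for that the global a~priori bounds from Theorem~\ref{thm: solution of obstacle problem} are essential, not incidental.
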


The separation property \eqref{eq:u bigger than phi} and the global regularity of $u$
we found in Theorem \ref{thm: solution of obstacle problem}
will permit us to prove that, if we fix any ball $\mathcal{B}$, then $u$ solves
\begin{equation}\label{eq:B}
\begin{cases}
\D^\lambda u\geq u-\phi&\hbox{in}~\R^n\\
u\leq\psi&\hbox{in}~\R^n\\
\D^\lambda u=u-\phi&\hbox{in}~\{u<\psi\}\cap\mathcal{B}\\
\displaystyle\lim_{|x|\to\infty}(u-\phi)(x)=0
\end{cases}
\end{equation}
where $\D^\lambda$ is the truncated version of \eqref{eq:MAs},
with ellipticity constants depending on the gap between $u$ and $\phi$ in $\mathcal{B}$.
Then Theorem \ref{thm:regularity} will follow from local regularity estimates for uniformly elliptic nonlocal equations.
These ideas also demonstrate another important point of divergence between
our obstacle problem and uniformly elliptic nonlocal obstacle problems.
In \cite{Caffarelli-Ros-Oton-Serra}, solutions are shown to be $C^{1,\tau}(\R^n)$.
In contrast, since $\lim_{|x|\to\infty}(u-\phi)(x)=0$,
we cannot guarantee that $\D$, when acting on $u$, will be globally uniformly elliptic.
In particular, the H\"older exponents $\beta$ and $\tau$ in Theorem~\ref{thm:regularity}
degenerate as $\mathcal{O}$ drifts to infinity and $\mathcal{B}$ increases in size, respectively.

To study the regularity of the free boundary $\partial\{u<\psi\}$ and the behavior of $u$ near free boundary points,
we fix a ball $\mathcal{B}$ centered at the origin such that $\{u=\psi\}\subset\mathcal{B}$.
Then, $u$ satisfies the obstacle problem \eqref{eq:B}. Let 
\begin{equation}\label{eq:psi minus u}
v=\psi-u.
\end{equation}
Let $x_0\in\partial\{u<\psi\}=\partial\{v>0\}$ be a regular free boundary point (see Definition \ref{defn:regular}).
As in \cite{Caffarelli-Ros-Oton-Serra}, for $r>0$ and $\alpha\in(0,1)$ sufficiently small, we define the rescalings
\[
v_r(x)=\frac{v(x_0+rx)}{r^{1+s+\alpha}\theta(x_0,r)}\quad \hbox{for}~x\in\R^n
\]
where
\[
\theta(x_0,r)=\sup_{\rho\geq r}\frac{\|\nabla v(x_0+\cdot)\|_{L^\infty(B_\rho)}}{\rho^{s+\alpha}}.
\]

\begin{thm}\label{thm:blow up}
There exist a sequence $r_k\searrow0$, $1/4\leq K_0\leq 1$, and $\e_0\in\mathbb{S}^{n-1}$ such that
\[
v_{r_k}(x)\to K_0(\e_0\cdot x)^{1+s}_+\quad\hbox{in}~C^1_{\mathrm{loc}}(\R^n),~\hbox{as}~k\to\infty.
\]
\end{thm}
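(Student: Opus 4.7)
The plan is to adapt the blow-up analysis of Caffarelli--Ros-Oton--Serra \cite{Caffarelli-Ros-Oton-Serra} to our setting, with the separation \eqref{eq:u bigger than phi} providing the local uniform ellipticity needed to bypass the genuine nonlinearity and degeneracy of $\D$. I split the argument into compactness, passage to the limit, and classification of the blow-up.

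\emph{Step 1 (compactness of $\{v_r\}$).} Since $v\geq 0$, $x_0\in\partial\{v>0\}$, and $v\in C^1$ by Theorem \ref{thm:regularity}, one has $v(x_0)=|\nabla v(x_0)|=0$. The definition of $\theta(x_0,r)$ then yields immediately $\|\nabla v_r\|_{L^\infty(B_\rho)}\leq \rho^{s+\alpha}$ for every $\rho\geq 1$, whence the polynomial growth bound $|v_r(x)|\leq C(1+|x|^{1+s+\alpha})$ in $\R^n$. By \eqref{eq:u bigger than phi}, $\inf_{\mathcal{N}}(u-\phi)>0$ on a fixed neighborhood $\mathcal{N}$ of $x_0$, so the reduction leading to \eqref{eq:B} shows that $u$ satisfies an obstacle problem on $\mathcal{N}$ for the truncated operator $\D^\lambda$, which is uniformly elliptic with constants depending only on this gap and on $M_1,M_2$. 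Linearizing $\D^\lambda$ along incremental quotients shows that $v=\psi-u$ satisfies extremal Pucci-type inequalities in the associated uniformly elliptic nonlocal class in $\{v>0\}\cap\mathcal{N}$. After rescaling, each $v_r$ satisfies an obstacle problem for a rescaled operator $L_r$ lying in a single uniformly elliptic nonlocal class, with vanishing right-hand side as $r\to 0$. Uniform $C^{1,\gamma}$ estimates for such problems \cite{Caffarelli-Ros-Oton-Serra}, combined with the growth bound above, yield precompactness of $\{v_r\}$ in $C^1_{\mathrm{loc}}(\R^n)$.

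\emph{Step 2 (limit equation).} Along a subsequence $r_k\searrow 0$, we may assume $v_{r_k}\to v_0$ in $C^1_{\mathrm{loc}}(\R^n)$ and $L_{r_k}\to L_0$ in the natural sense. The limit $v_0$ is then a global solution of
\[
v_0\geq 0 \quad\hbox{in}~\R^n, \qquad L_0 v_0 = 0 \quad\hbox{in}~\{v_0>0\},
\]
with $v_0(0)=|\nabla v_0(0)|=0$, the growth bound $|v_0(x)|\leq C(1+|x|^{1+s+\alpha})$, and $L_0$ a translation invariant nonlocal operator of order $2s$ inside the same uniformly elliptic class.

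\emph{Step 3 (classification).} Because $x_0$ is a regular free boundary point (Definition \ref{defn:regular}), the coincidence set of $v_0$ is a convex cone with nonempty interior. The Liouville-type classification of global convex solutions to uniformly elliptic nonlocal obstacle problems with subcritical growth $\alpha<s$, as in \cite{Caffarelli-Ros-Oton-Serra}, then forces
\[
v_0(x)=K_0(\e_0\cdot x)_+^{1+s}
\]
for some $\e_0\in\S^{n-1}$ and $K_0>0$. The upper bound $K_0\leq 1$ is read off from the normalization $\|\nabla v_r\|_{L^\infty(B_1)}\leq 1$, while the lower bound $K_0\geq 1/4$ follows from nondegeneracy: since $\theta(x_0,r)$ is a non-increasing supremum, one can choose $r_k$ so that this supremum is essentially attained near scale $r_k$, preventing the blow-up from collapsing.

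\emph{Main obstacle.} The delicate point is Step 1, namely justifying that the rescaled operators $L_r$ stay in a single uniformly elliptic nonlocal class as $r\to 0$. This is precisely where the strict separation \eqref{eq:u bigger than phi} is indispensable: it keeps $\inf_{\mathcal{N}}(u-\phi)$ bounded below by a positive constant throughout the rescaling, which in turn bounds $\lambda$ in \eqref{eq:B} away from $0$ and $\infty$ and preserves the ellipticity of $\D^\lambda$ and of its linearization. Without this feature the blow-up would be ambiguous, as $\D$ itself is only degenerate elliptic; once the uniform class is fixed, the remaining arguments follow the template of the uniformly elliptic nonlocal obstacle problem in \cite{Caffarelli-Ros-Oton-Serra}.
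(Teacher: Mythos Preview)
Your outline follows the correct template---use the separation $u>\phi$ to reduce to a uniformly elliptic nonlocal obstacle problem, rescale, extract a limit, and classify---but there are two substantive gaps.

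\textbf{Convexity of the blow-up is missing.} The classification theorems in \cite{Caffarelli-Ros-Oton-Serra} that yield $v_0=K_0(\e_0\cdot x)_+^{1+s}$ require $v_0$ to be \emph{convex}, together with one-sided Pucci inequalities for the directional derivatives $\partial_\e v_0$. You never establish convexity, and your Step 3 replaces it with the unjustified claim that ``the coincidence set of $v_0$ is a convex cone with nonempty interior''. In the paper, convexity of $v_0$ is obtained from the global semiconcavity of $u$ (Theorem \ref{thm: solution of obstacle problem}): since $\SC(u)\leq M_2$ and $\psi\in C^{2,1}$, one has $D^2v\geq -\Id$ (after normalization), and then
\[
D^2v_r(x)=\frac{r^2}{r^{1+s+\alpha}\theta(r)}D^2v(rx)\geq -\frac{1}{\nu(r)}\Id,
\]
which tends to $0$ because $1+s+\alpha<2$ and $\theta(r)\geq\nu(r)\to\infty$. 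This is precisely where the global (not just local) regularity of $u$ enters, and it cannot be replaced by the statement about the coincidence set.

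\textbf{The limit problem is misstated.} There is no ``$L_r\to L_0$'' here: the kernels $|A^{-1}y|^{-n-2s}$ are homogeneous of degree $-n-2s$, so the class $\mathpzc{M}_\lambda$ and the Pucci operators $M^\pm_\lambda$ are scale invariant. What survives in the limit is not an equation $L_0v_0=0$, but the one-sided inequalities $M^+_\lambda(\partial_\e v_0)\geq 0$ in $\{v_0>0\}$ for all $\e\in\S^{n-1}$ (and the averaged version against probability measures), obtained from the Lipschitz dependence of $c_A$ and the vanishing error $|h|/\nu(r)$. These, together with convexity and the growth bound, are exactly the hypotheses of the classification results in \cite{Caffarelli-Ros-Oton-Serra}. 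Your nondegeneracy mechanism for $K_0\geq 1/4$ is correct in spirit: one picks $r_k$ so that $\|\nabla v\|_{L^\infty(B_{r_k})}\geq\tfrac12\,r_k^{s+\alpha}\theta(r_k)$, forcing $\|\nabla v_{r_k}\|_{L^\infty(B_1)}\geq 1/2$.
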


Observe that, unlike in \cite{Caffarelli-Ros-Oton-Serra}, we do not know whether or not $\D^\lambda u=u-\phi$ in the part of the noncoincidence
set $\{u<\psi\}$ that lies outside of $\mathcal{B}$. Hence, even after subtracting the obstacle from $u$ in \eqref{eq:B},
we cannot use known regularity results for the free boundary.
In fact, it is well known that the behavior at infinity of solutions to nonlocal equations can have dramatic consequences
on their local properties (see, for instance, \cite{Dipierro-Savin-Valdinoci}).
Moreover, our H\"older estimates for $\nabla u$ degenerate at infinity.
Nonetheless, we are able to overcome these issues due to the global regularity of $u$ we proved in Theorem \ref{thm: solution of obstacle problem}.
Indeed, this gives us enough control at infinity to be able to show Theorem \ref{thm:blow up}.
Finally, the separation property \eqref{eq:u bigger than phi}, the global regularity of $u$, and Theorem \ref{thm:blow up}
permit us to use the methods of \cite{Caffarelli-Ros-Oton-Serra}, in our setting, to 
obtain regularity of the free boundary.

\begin{thm}
\label{thm:2}
Let $u$ be the solution to \eqref{eq:obstacle problem}.
Let $\mathcal{B}$ be a ball centered at the origin such that $\{u=\psi\}\subset\mathcal{B}$.
There exists $\bar{\alpha} = \bar{\alpha}(n,s,\inf_{4\mathcal{B}}(u -\phi),M_1,M_2) \in(0,1)$ such that the following holds:
for any $\gamma \in (0,\bar{\alpha})$ and $\alpha \in (0,\bar{\alpha})$ such that $1+ s + \alpha < 2$ and for any $x_0 \in \partial \{ u < \psi \}$,
\begin{enumerate}[(1)]
\item either 
\[
\liminf_{r \searrow 0} \frac{|\{u = \psi\} \cap B_r(x_0)|}{|B_r(x_0)|} > 0 \quad\text{and}
\]
\[
\psi(x) - u(x) = cd^{1+s}(x) + o\big{(}|x-x_0|^{1+s+\alpha}\big{)}
\]
\item or
\[
\liminf_{r \searrow 0} \frac{|\{u = \psi\} \cap B_r(x_0)|}{|B_r(x_0)|} = 0 \quad\text{and}
\]
\[
\psi(x) - u(x) = o\big{(}|x-x_0|^{\min\{2s + \gamma,1+s+\alpha\}}\big{)}
\]
\item or
\[
\liminf_{r \searrow 0} \frac{|\{u = \psi\} \cap B_r(x_0)|}{|B_r(x_0)|} > 0 \quad\text{and}
\]
\[
\psi(x) - u(x) = o\big{(}|x-x_0|^{1+s+\alpha}\big{)}
\]
\end{enumerate}
where $d(x) = \dist(x, \partial\{u<\psi\})$ and $c > 0$.
Moreover, the set of points $x_0$ satisfying (1) is an open subset of the free boundary of class $C^{1,\gamma}$.
\end{thm}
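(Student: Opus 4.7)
The plan is to transplant the machinery developed by Caffarelli--Ros-Oton--Serra in \cite{Caffarelli-Ros-Oton-Serra} to our setting, leveraging the separation \eqref{eq:u bigger than phi} and the global regularity from Theorem \ref{thm: solution of obstacle problem}. First, fix $\mathcal{B}$ as in the statement. By \eqref{eq:u bigger than phi} and the reduction to \eqref{eq:B}, $u$ solves an obstacle problem for the truncated operator $\D^\lambda$, which is uniformly elliptic on $\mathcal{B}$ with ellipticity constants depending on $\inf_{4\mathcal{B}}(u-\phi)>0$. Setting $v=\psi-u$ as in \eqref{eq:psi minus u}, we obtain an obstacle problem for a uniformly elliptic nonlocal operator acting on $v$ inside $\mathcal{B}$, with a right-hand side controlled by $\psi$, $\phi$, and the global bounds on $u$.

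The trichotomy at a free boundary point $x_0\in\partial\{u<\psi\}\subset\mathcal{B}$ would be established by splitting first according to whether $\liminf_{r\searrow 0}|\{u=\psi\}\cap B_r(x_0)|/|B_r(x_0)|$ is zero or positive. In the zero-density case, the upper-growth bounds of \cite{Caffarelli-Ros-Oton-Serra}, adapted to our uniformly elliptic localized problem, yield $v(x)=o(|x-x_0|^{\min\{2s+\gamma,1+s+\alpha\}})$, giving case (2). In the positive-density case, I would use Definition \ref{defn:regular} to distinguish regular from non-regular: if $x_0$ is regular, Theorem \ref{thm:blow up} combined with a uniqueness-of-blow-up argument yields the expansion $\psi-u=cd^{1+s}+o(|x-x_0|^{1+s+\alpha})$ of case (1) with $c>0$; otherwise, the failure of nondegenerate $(1+s)$-growth forces case (3).

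For the openness and $C^{1,\gamma}$ regularity of the regular set, I would adapt the iterative flatness-improvement scheme of \cite{Caffarelli-Ros-Oton-Serra}. Starting from a regular point $x_0$, the half-space blow-up $K_0(\e_0\cdot x)_+^{1+s}$ from Theorem \ref{thm:blow up}, combined with a uniform boundary Harnack principle for the linearized uniformly elliptic nonlocal operator and a compactness-and-contradiction dichotomy, yields a geometric improvement of the approximation at each dyadic scale. This translates into H\"older continuity of the inward normal to $\{u<\psi\}$ at regular points and, in turn, shows that the regular set is open in the free boundary and is locally a $C^{1,\gamma}$ graph.

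The main technical obstacle will be controlling the nonlocal tails during rescaling. Since $\D^\lambda$ integrates against the full tail of $u$, the rescaled functions $v_r$ inherit tail contributions that need not vanish, and unlike in \cite{Caffarelli-Ros-Oton-Serra} we lack global uniform ellipticity. Fortunately, Theorem \ref{thm: solution of obstacle problem} supplies global Lipschitz and semiconcavity bounds, while Theorem \ref{thm:regularity}(2) gives $C^{1,\tau}$ inside $\mathcal{B}$; together these provide growth estimates on $v_r$ at infinity that are uniform in $r$. This uniform control is precisely what is needed to run the compactness and iteration arguments of \cite{Caffarelli-Ros-Oton-Serra} in our setting and to complete the proof.
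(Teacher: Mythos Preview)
Your proposal is correct and follows essentially the same route as the paper: both reduce to the uniformly elliptic truncated problem \eqref{eq:B}, invoke the blow-up classification of Theorem \ref{thm:blow up} at regular points, use the global Lipschitz and semiconcavity bounds from Theorem \ref{thm: solution of obstacle problem} together with the gradient growth estimate \eqref{v4} to control the nonlocal tails uniformly in the rescaling parameter, and then feed these inputs into the machinery of \cite[Sections~8.2--8.3]{Caffarelli-Ros-Oton-Serra}. The paper's proof is simply terser---it records the precise Pucci inequalities for $\partial_\e v_{r_k}$ and the $C^1$-closeness to the half-space profile needed as hypotheses for the Caffarelli--Ros-Oton--Serra iteration, then cites those sections directly---whereas you unpack more of what that iteration contains.
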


The paper is organized as follows. In section \ref{Section:Preliminaries}, we establish some preliminary
results that will be needed for the rest of the work. The proofs of Theorems \ref{thm: solution of obstacle problem},
\ref{thm:regularity}, \ref{thm:blow up}, and \ref{thm:2} are presented in sections
\ref{Section:Existence}, \ref{sec: higher reg}, \ref{Section:blow up}, and \ref{Section:fb}, respectively.
%%%%%%%%%%%%%%%%%%%%%%%%%%%%%%%%%%%%%%%%%%%%%%%%%%%%%%
\section{Preliminaries}
\label{Section:Preliminaries}
%%%%%%%%%%%%%%%%%%%%%%%%%%%%%%%%%%%%%%%%%%%%%%%%%%%%%%

In this section, we recall some facts about
the fractional Monge--Amp\`ere operator $\D$, problem \eqref{eq:v}, and uniformly elliptic nonlocal operators.

%%%%%%%%%%%%%%%%%%%%%%%%%%%%%%%%%%%%%%%%%%%%%%%%%%%%%%
\subsection{Notation}
%%%%%%%%%%%%%%%%%%%%%%%%%%%%%%%%%%%%%%%%%%%%%%%%%%%%%%

Let $\mathcal{O}$ be an open subset of $\R^n$ and  $f : \mathcal{O} \to \R$.
We denote the Lipschitz constant of $f$ in $\mathcal{O}$ by
\[
[f]_{\Lip(\mathcal{O})}=\sup_{x,y\in\mathcal{O},\,x\neq y}\frac{|f(x)-f(y)|}{|x-y|}.
\]
For the second order incremental quotient of $f$ at $x$ in the direction of $y$, we write
\[
\delta(f,x,y) = f(x+y)+f(x-y)-2f(x).
\]
When $\mathcal{O}=\R^n$, we say that $f$ is semiconcave
 if there exists a constant $C>0$ such that $\delta(f,x,y)\leq C|y|^2$ for all $x,y\in\R^n$.
In this case,
\[
\SC(f)=\sup_{x,y\in\R^n}\frac{\delta(f,x,y)}{|y|^2}
\]
is the semiconcavity constant of $f$. 
Alternatively, $f$ is semiconcave if and only if $f(x)-C|x|^2/2$ is concave.

Let $\USC(\mathcal{O})$ (resp. $\LSC(\mathcal{O})$) be
the set of functions that are upper (resp. lower) semicontinuous
in $\mathcal{O}$. Define
\[
f^*(x)=\lim_{r\to0}\sup\big\{f(y):y\in\mathcal{O}~\hbox{and}~|y-x|<r\big\}\quad\hbox{for every}~x\in\mathcal{O}.
\]
We call $f^*$ the upper semicontinuous envelope of $f$ in $\mathcal{O}$; it is
the smallest $g\in\USC(\mathcal{O})$
satisfying $f\leq g$.

\begin{rem}\label{rmk: property 1 sc envelope}
A simple, useful property of the upper semicontinuous
envelope $f^*$  of $f$ in $\mathcal{O}$ is that for any $x_0\in\mathcal{O}$,
there exist points $y_k\in\mathcal{O}$ such that
$y_k\to x_0$ and $f(y_k)\to f^\ast(x_0)$, as $k\to\infty$.
(Note, we allow $y_k = x_0$ for all $k$.)
\end{rem}
 
%%%%%%%%%%%%%%%%%%%%%%%%%%%%%%%%%%%%%%%%%%%%%%%%%%%%%%
\subsection{The fractional Monge--Amp\`ere operator}
%%%%%%%%%%%%%%%%%%%%%%%%%%%%%%%%%%%%%%%%%%%%%%%%%%%%%%

We begin this subsection by providing some novel insight on the definition
of the fractional Monge--Amp\`ere operator $\D u$ in \eqref{eq:MAs}, which may be of independent interest.
Next, we precisely describe $\phi$. Then, we discuss the definition of viscosity solutions and some further properties of $\D u$ and the problem \eqref{eq:v}.

Recall that
\[
\mathpzc{M} =\big{\{}\hbox{symmetric positive definite matrices $A$ of size  $n \times n$ such that $\det A = 1$}\big{\}}.
\]
For any $A\in\mathpzc{M}$, we define the
constant coefficient second order elliptic operator
\[
L_Aw(x)=-\Delta[w\circ A](A^{-1}x)=-\trace (A^2D^2w)(x),
\]
see \eqref{local}.
Then, $L_A$ is nothing but a linear transformation of the Laplacian $-\Delta$.
For $s\in(0,1)$, consider the fractional power operator
\[
L_A^s=-(L_A)^s \quad\hbox{in}~\R^n.
\]

\begin{lem}
Let $w:\R^n\to\R$ such that
$$\int_{\R^n}\frac{|w(x)|}{(1+|x|)^{n+2s}}\,dx<\infty.$$
Let $\mathcal{O}$ be an open set. If $w\in C^{2s+\delta}(\mathcal{O})$ or $w\in C^{1,2s+\delta-1}(\mathcal{O})$ when $s\geq1/2$,
for some $\delta>0$, then, for any $x\in\mathcal{O}$,
\begin{equation}\label{eq:LAS}
\begin{aligned}
L_A^s w(x) &= c_{n,s}\PV \int_{\R^n}\frac{w(y)-w(x)}{|A^{-1}(y-x)|^{n+2s}}\,dy \\
&= \frac{c_{n,s}}{2} \int_{\R^n}\frac{w(x+y)+w(x-y)-2w(x)}{|A^{-1}y|^{n+2s}}\,dy \\
&= -(-\Delta)^s[w\circ A](A^{-1}x),
\end{aligned}
\end{equation}
where $c_{n,s}=\frac{4^s\Gamma(n/2+s)}{\pi^{n/2}|\Gamma(-s)|}>0$. As a consequence of \eqref{eq:LAS}, we have
\[
\D w(x)=\inf\big{\{}L_A^sw(x):A\in\mathpzc{M}\big{\}}.
\]
\end{lem}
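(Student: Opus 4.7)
The plan is to establish the three identities in \eqref{eq:LAS} in the order in which they are most easily proved: first the conjugation relation $L_A^s w(x) = -(-\Delta)^s[w \circ A](A^{-1}x)$, which is a Fourier-side statement about the two operators; then the principal value formula, obtained from the standard PV representation of $(-\Delta)^s$ by a linear change of variables with Jacobian $1$; and finally the symmetrized formula, obtained from the PV one by the standard cancellation of the odd part. The inf-formula $\D w(x) = \inf\{L_A^s w(x) : A \in \mathpzc{M}\}$ is then immediate from comparing the second line of \eqref{eq:LAS} with the definition \eqref{eq:MAs} of $\D$.

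For the conjugation identity, I first compute the Fourier symbol of $L_A$. Since $A$ is symmetric with $\det A = 1$, the identity $L_A w(x) = -\trace(A^2 D^2 w)(x)$ gives $\widehat{L_A w}(\xi) = \trace(A^2 \xi \xi^T)\widehat{w}(\xi) = |A\xi|^2 \widehat{w}(\xi)$, so $L_A \geq 0$ on Schwartz functions and $(L_A)^s$ is the Fourier multiplier with symbol $|A\xi|^{2s}$. On the other hand, $v := w \circ A$ has Fourier transform $\widehat{v}(\eta) = \widehat{w}(A^{-1}\eta)$, hence $\widehat{(-\Delta)^s v}(\eta) = |\eta|^{2s}\widehat{w}(A^{-1}\eta)$. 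The change of variable $\zeta = A^{-1}\eta$ in the inverse Fourier transform evaluated at $A^{-1}x$ recovers $\int e^{ix\cdot \zeta}|A\zeta|^{2s}\widehat{w}(\zeta)\,d\zeta = (L_A)^s w(x)$, which is the conjugation identity. To derive the PV representation, I apply the familiar identity $(-\Delta)^s v(\xi) = c_{n,s}\PV\int_{\R^n}\frac{v(\xi)-v(\eta)}{|\xi-\eta|^{n+2s}}\,d\eta$ to $v = w \circ A$ at $\xi = A^{-1}x$ and change variables $\eta = A^{-1}y$; the equality $|\xi - \eta| = |A^{-1}(x-y)|$ together with $|\det A^{-1}| = 1$ yields precisely the first line of \eqref{eq:LAS}. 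The second line follows from the first by pairing the values at $y$ and $2x - y$, whose odd contributions in the Taylor expansion of $w$ around $x$ cancel because the weight $|A^{-1}(y-x)|^{-(n+2s)}$ is even in $y - x$.

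The main technical obstacle is the pointwise justification of these formulas under the stated hypotheses rather than merely in a distributional or Schwartz sense. This is handled by the standard threshold argument: the local regularity of $w$ near $x$ (either $C^{2s+\delta}$, or $C^{1,2s+\delta-1}$ when $s \geq 1/2$, in which case the gradient term arising from the Taylor expansion is absorbed) makes the integrand absolutely integrable near the singularity at $x$, while the hypothesis $\int_{\R^n}|w(x)|(1+|x|)^{-(n+2s)}\,dx < \infty$ ensures convergence at infinity after adjusting for the norm equivalence $|A^{-1}y| \asymp |y|$. The Fourier and change-of-variables manipulations can then be justified either by approximating $w$ in a suitable pointwise-local-plus-tempered sense by Schwartz functions and passing to the limit, or, more directly, by invoking the heat semigroup/subordination formula for $(L_A)^s$ together with the pointwise conjugation $e^{-tL_A} w(x) = (e^{t\Delta}[w \circ A])(A^{-1}x)$, which is immediate from the explicit Gaussian kernel and the change of variables $\det A = 1$.
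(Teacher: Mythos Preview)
Your proof is correct and takes a somewhat different route from the paper's. The paper first establishes the PV representation directly for Schwartz functions by applying the subordination formula
\[
-(L_A)^s w(x)=\frac{1}{|\Gamma(-s)|}\int_0^\infty\big(e^{tL_A}w(x)-w(x)\big)\,\frac{dt}{t^{1+s}}
\]
together with the explicit Gaussian kernel of $e^{tL_A}$ (which contains $|A^{-1}y|^2$ in the exponent), and only afterwards deduces the symmetrized and conjugation identities; the extension to the stated regularity class is then carried out by the approximation device of \cite{Silvestre-CPAM}. You instead begin with the conjugation identity on the Fourier side, which immediately reduces everything to the known pointwise formula for $(-\Delta)^s$ and recovers all three lines of \eqref{eq:LAS} by a single linear change of variables with Jacobian one. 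Your approach is more economical in that it recycles the classical $(-\Delta)^s$ theory rather than recomputing the kernel from scratch, while the paper's semigroup computation is more self-contained and makes the constant $c_{n,s}$ emerge naturally without importing it. In the end you yourself invoke the semigroup/subordination picture as one of the two ways to justify the manipulations under the stated hypotheses, so the two arguments are close cousins rather than genuinely independent.
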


\begin{proof}
The idea is to first prove \eqref{eq:LAS} for $w$ in the Schwartz class $\mathcal{S}$, by applying the method of semigroups
as in \cite[Lemma~5.1]{Stinga-Torrea}.
Then, for $w$ as in the hypotheses, one can use an approximation device exactly as done in \cite[Proposition~2.4]{Silvestre-CPAM}. We just sketch the steps here.
For $A\in\mathpzc{M}$ and $w\in\mathcal{S}$, the heat semigroup generated by
$L_A$ acting on $w$ is given explicitly by
\[
e^{tL_A}w(x)=\int_{\R^n}\frac{e^{-|A^{-1}x|^2/(4t)}}{(4\pi t)^{n/2}}\,w(x-y)\,dy,
\]
for $x\in\R^n$ and $t>0$. Then, since $e^{tL_A}1(x)=1$,
by Fubini's theorem (see  \cite[Lemma~5.1]{Stinga-Torrea}) and the change of variables $r=|A^{-1}x|^2/(4t)$,
\begin{align*}
L_A^s w(x) &= -(L_A)^sw(x) \\
&=\frac{1}{|\Gamma(-s)|}\int_0^\infty\big(e^{tL_A}w(x)-w(x)\big)\,\frac{dt}{t^{1+s}} \\
&= \frac{1}{|\Gamma(-s)|}\int_0^\infty\int_{\R^n}\frac{e^{-|A^{-1}x|^2/(4t)}}{(4\pi t)^{n/2}}(w(x-y)-w(x))\,dy\,\frac{dt}{t^{1+s}} \\
&= \PV\int_{\R^n}(w(x-y)-w(x))\bigg[\int_0^\infty\frac{e^{-|A^{-1}x|^2/(4t)}}{|\Gamma(-s)|(4\pi t)^{n/2}}\,\frac{dt}{t^{1+s}}\bigg]\,dy \\
&= c_{n,s}\PV \int_{\R^n}\frac{w(x-y)-w(x)}{|A^{-1}y|^{n+2s}}\,dy,
\end{align*}
as desired. The second identity in \eqref{eq:LAS} follows immediately from the first one, and the third one is
deduced via a simple change of variables.
\end{proof}

Now, we give the precise description of the function $\phi$ appearing in \eqref{eq:v} and \eqref{eq:obstacle problem}.
Let $\Gamma$ be a cone and $\eta : \R^n \to \R$ be such that
\[
|\eta(x)|\leq a|x|^{-\epsilon},\quad|\nabla\eta(x)|\leq a|x|^{-(1+\epsilon)},\quad\hbox{and}\quad|D^2\eta(x)|\leq a|x|^{-(2+\epsilon)}
\]
for some constants $a>0$ and $\epsilon \in (0,n)$. We let $\phi \in C^{2,\sigma}(\R^n)$, for some $\sigma>0$, be such that
\[
\phi(0)=0, \quad \nabla\phi(0)=0,\quad\hbox{and}\quad \phi=\Gamma+\eta \quad\hbox{near infinity}.
\]

We will work with viscosity solutions as defined in \cite[Definition~2.1]{Caffarelli-Charro}.

\begin{defn}
\label{defn:viscosity solution}
Let $\mathcal{O}$ be an open subset of $\R^n$. 
A function $w:\R^n\to\R$ such that $w \in \USC(\overline{\mathcal{O}})$ (resp. $w \in \LSC(\overline{\mathcal{O}})$)
is called a viscosity subsolution (resp. supersolution) to $\D w = w - \phi$ in $\mathcal{O}$, which we denote by
\[
\D w \geq w - \phi\quad\big(\hbox{resp.}~\D w \leq w - \phi\big)\quad\hbox{in}~\mathcal{O},
\]
if whenever
\begin{itemize}
\item[-] $x_0$ is a point in $\mathcal{O}$;
\item[-] $\mathcal{N} \subset \mathcal{O}$ is an open neighborhood of $x_0$;
\item[-] $P$ is a $C^2$ function on $\overline{\mathcal{N}}$;
\item[-] $P(x_0) = w(x_0)$; and
\item[-] $P(x) > w(x)$ (resp. $P(x) < w(x)$) for every $x \in \mathcal{N} \setminus \{ x_0 \}$;
\end{itemize}
then
\[
\D\vth(x_0) \geq \vth(x_0) - \phi(x_0) \quad(\text{resp. }\D\vth(x_0) \leq \vth(x_0) - \phi(x_0))
\]
where $\vth$ is defined as
\begin{equation}
\label{eq:vth}
\vth(x)=\begin{cases}
P(x)&\hbox{if}~x\in \mathcal{N}\\
w(x)&\hbox{if}~x\in\R^n\setminus \mathcal{N}.
\end{cases}
\end{equation}
When all of the items listed above are satisfied for some triplet $(P,x_0,\mathcal{N})$,
we say that $P$ is a $C^2$ function touching $w$ from above (resp. below) at $x_0$ in $\mathcal{N}$.
A viscosity solution $w$ is both a viscosity subsolution and a viscosity supersolution.
In particular, solutions are continuous by definition.
\end{defn}

From now on, any reference to a subsolution, supersolution, or solution will be in the viscosity sense.

Note that a semiconcave function can always be touched from above
by a quadratic polynomial at any point.

\begin{rem}
\label{rmk: viscosity nhd}
Let $P$ be a $C^2$ function touching $w$ from above (resp. below) at $x_0$ in $\Nhd$.
If $\Nhd'$ is any open subset of $\Nhd$ containing $x_0$,
then $P$ is a $C^2$ function that touches $w$ from above (resp. below) at $x_0$ in $\Nhd'$.
Define $\vth$ as in \eqref{eq:vth} and let
\[
\vth'(x)=\begin{cases}
P(x)&\hbox{if}~x\in\Nhd'\\
w(x)&\hbox{if}~x\in\R^n \setminus\Nhd'.
\end{cases}
\]
Then, $\vth\geq\vth'$ (resp. $\vth\leq\vth'$) in $\R^n$ and $\vth(x_0)=\vth'(x_0)$,
so that $\delta(\vth,x_0,y)\geq\delta(\vth',x_0,y)$ for every $y\in\R^n$
(resp. $\delta(\vth,x_0,y) \leq \delta(\vth',x_0,y)$).
It follows that $L_A^s\vth(x_0) \geq L_A^s\vth'(x_0)$
(resp. $L_A^s\vth(x_0) \leq L_A^s\vth'(x_0)$) for all matrices $A\in\mathpzc{M}$,
which implies that
\[
\D\vth(x_0) \geq \D\vth'(x_0)
\]
(resp. $\D \vth(x_0) \leq \D\vth'(x_0)$), see \eqref{eq:MAs}.
Therefore, if $\D w\geq w-\phi$ (resp. $\D w\leq w-\phi$) in $\R^n$,
then, in order to check the viscosity solution condition from Definition \ref{defn:viscosity solution},
we can always restrict ourselves to working in a smaller neighborhood $\mathcal{N}'\subset\mathcal{N}$
containing $x_0$.
\end{rem}

From the definition of $\D$, we see that
\begin{enumerate}
\item If $\tau_hw(x)=w(x+h)$, for some $h\in\R^n$, then $\D(\tau_hw)=\tau_h(\D w)$.
\item For any constant $c\in\R$, $\D(w+c)=\D w$.
\item $\D$ is a concave operator in the sense that, for any $w_1,w_2$,
\[
\D\bigg(\frac{w_1+w_2}{2}\bigg)\geq\frac{1}{2}\D w_1+\frac{1}{2}\D w_2.
\]
\end{enumerate}

Let $w$ be a viscosity subsolution
(resp. supersolution) as in Definition \ref{defn:viscosity solution}. In the next lemma, we state that if $w$ can be touched from above (resp. below)
by a $C^2$ function at a point $x$, then $\D w(x)$ can be computed classically.
This is an important, typical feature of nonlocal equations, see also \cite[Lemma~3.3]{Caffarelli-Silvestre-CPAM}.

\begin{lem}[see~{\cite[Lemma~2.2]{Caffarelli-Charro}}]\label{lem:touchandcompute}
Let $w:\R^n\to\R$ be asymptotically linear at infinity. If
$$\D w \geq w - \phi\quad\big(\hbox{resp.}~\D w \leq w - \phi\big)\quad\hbox{in}~\mathcal{O}\subset\R^n$$
in the viscosity sense and $w$ can be touched by a $C^2$ function
from above (resp. below) at a point $x\in\mathcal{O}$,
then $\delta(w,x,y)/|A^{-1}y|^{n+2s}\in L^1(\R^n)$ for every $A\in\mathpzc{M}$ and
\[
\D w(x)\geq w(x)-\phi(x)\quad\big(\hbox{resp.}~\D w(x)\leq w(x)-\phi(x)\big)
\]
in the classical sense.
\end{lem}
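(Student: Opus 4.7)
I will reproduce the classical ``touch and compute'' argument for nonlocal equations (cf.\ \cite{Caffarelli-Silvestre-CPAM}), adapting it to the fractional Monge--Amp\`ere operator via the representation $\D w(x) = \inf_{A \in \mathpzc{M}} L_A^s w(x)$ supplied by \eqref{eq:LAS}. I address the subsolution case; the supersolution case is symmetric.

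Let $P$ be the $C^2$ function touching $w$ from above at $x$ in $\mathcal{N}$; after adding a small quadratic perturbation $\epsilon|y-x|^2$ to $P$ if necessary, I may assume the touching is strict. Fixing $r_0>0$ with $B_{r_0}(x) \subset \mathcal{N}$, for each $r \in (0,r_0]$ the plan is to introduce the auxiliary function
\[
\vartheta_r(y) = \begin{cases} P(y) & y \in B_r(x), \\ w(y) & y \in \R^n \setminus B_r(x). \end{cases}
\]
By Remark \ref{rmk: viscosity nhd}, applying the viscosity subsolution hypothesis with the neighborhood $B_r(x)$ yields $\D \vartheta_r(x) \geq \vartheta_r(x) - \phi(x) = w(x) - \phi(x)$. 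Since $\vartheta_r$ is $C^2$ on $B_r(x)$ and coincides outside with the asymptotically linear function $w$, the quantity $L_A^s \vartheta_r(x)$ is for each $A \in \mathpzc{M}$ an absolutely convergent integral that splits as
\[
L_A^s\vartheta_r(x) = \frac{c_{n,s}}{2}\int_{B_r}\frac{\delta(P,x,y)}{|A^{-1}y|^{n+2s}}\,dy + \frac{c_{n,s}}{2}\int_{\R^n \setminus B_r}\frac{\delta(w,x,y)}{|A^{-1}y|^{n+2s}}\,dy,
\]
with the first term bounded in absolute value by $C_A r^{2-2s}$.

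The main obstacle will be the $L^1$-integrability of $\delta(w,x,\cdot)/|A^{-1}\cdot|^{n+2s}$; once that is secured, the rest is a routine passage to the limit. The positive part is easy: since $P \geq w$ on $\mathcal{N}$ with equality at $x$ and $P$ is $C^2$, I will have $\delta^+(w,x,y) \leq \delta^+(P,x,y) \leq C|y|^2$ for $|y| < r_0$, while the asymptotic linearity of $w$ keeps $|\delta(w,x,y)|$ bounded for large $|y|$, so (recall $2s<2$) $\delta^+(w,x,\cdot)/|A^{-1}\cdot|^{n+2s} \in L^1(\R^n)$. The negative part is subtler: the viscosity inequality $L_A^s \vartheta_r(x) \geq w(x) - \phi(x)$, combined with the $O(r^{2-2s})$ control on the first integral, will yield a uniform-in-$r$ lower bound on $\int_{\R^n \setminus B_r}\delta(w,x,y)/|A^{-1}y|^{n+2s}\,dy$ and hence a uniform upper bound on $\int_{\R^n \setminus B_r}\delta^-(w,x,y)/|A^{-1}y|^{n+2s}\,dy$. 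Monotone convergence as $r \searrow 0$ then gives integrability of $\delta^-(w,x,\cdot)/|A^{-1}\cdot|^{n+2s}$, completing the $L^1$ claim.

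With integrability in hand, dominated convergence produces $L_A^s \vartheta_r(x) \to L_A^s w(x)$ as $r \searrow 0$ (the first integral vanishing, the second converging to the full integral). Passing to the limit in $L_A^s \vartheta_r(x) \geq \D \vartheta_r(x) \geq w(x) - \phi(x)$ then yields $L_A^s w(x) \geq w(x) - \phi(x)$ for every $A \in \mathpzc{M}$, and taking the infimum over $A$ delivers the classical inequality $\D w(x) \geq w(x) - \phi(x)$, as required.
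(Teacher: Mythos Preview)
The paper does not give its own proof of this lemma; it is stated with a citation to \cite[Lemma~2.2]{Caffarelli-Charro} (and the analogous \cite[Lemma~3.3]{Caffarelli-Silvestre-CPAM}). Your argument is precisely the standard ``touch and compute'' proof those references contain, and the overall structure---shrink the test neighborhood, split $L_A^s\vartheta_r(x)$ into a vanishing $C^2$ piece and a tail, use the viscosity inequality to bound $\int_{\R^n\setminus B_r}\delta^-(w,x,y)/|A^{-1}y|^{n+2s}\,dy$ uniformly in $r$, then pass to the limit---is correct.

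One small inaccuracy: you assert that the asymptotic linearity of $w$ keeps $|\delta(w,x,y)|$ \emph{bounded} for large $|y|$. In the setting of this paper, ``asymptotically linear'' means $w$ is close to $\phi$, which near infinity is a cone $\Gamma$ plus a decaying term; for a cone one has, e.g., $\delta(|\cdot|,0,y)=2|y|$, which is unbounded. What is true (and suffices) is that $|\delta(w,x,y)|\leq C|y|$ at infinity, and since $s>1/2$ the kernel $|y|/|A^{-1}y|^{n+2s}\leq C_A|y|^{1-n-2s}$ is integrable outside $B_1$. With that correction your integrability argument for $\delta^+$ goes through, and the rest of the proof is unaffected.
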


Finally, we recall the comparison principle proved by Caffarelli and Charro.

\begin{thm}[see {\cite[Theorem~4.1]{Caffarelli-Charro}}]
\label{thm:comparison principle}
Let $w_1\in\USC(\R^n)$ and $w_2\in\LSC(\R^n)$ such that
\[
\begin{cases}
\D w_1\geq w_1-\phi\quad\hbox{in}~\R^n\\
\displaystyle\lim_{|x|\to\infty}(w_1-\phi)(x)=0
\end{cases}
\text{and}\quad
\begin{cases}
\D w_2\leq w_2-\phi\quad\hbox{in}~\R^n\\
\displaystyle\lim_{|x|\to\infty}(w_2-\phi)(x)=0.
\end{cases}
\]
Then,
\[
w_1\leq w_2\quad\hbox{in}~\R^n.
\]
\end{thm}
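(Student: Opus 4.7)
I would argue by contradiction using the doubling-of-variables technique, leveraging (i) the asymptotic condition $w_i-\phi\to 0$ to localize the argument, and (ii) the infimum structure of $\D$ to bypass the concavity of the operator. Suppose $M:=\sup_{\R^n}(w_1-w_2)>0$. Since $w_1-w_2\in\USC(\R^n)$ and vanishes at infinity, $M$ is attained.

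For $\alpha>0$, introduce $\Psi_\alpha(x,y)=w_1(x)-w_2(y)-\tfrac{\alpha}{2}|x-y|^2$, whose supremum is at least $M$. Writing $\Psi_\alpha(x,y)=(w_1-\phi)(x)-(w_2-\phi)(y)+\phi(x)-\phi(y)-\tfrac{\alpha}{2}|x-y|^2$, the first two terms vanish at infinity, and since $\phi$ is globally Lipschitz (its cone-plus-decay form at infinity bounds $|\nabla\phi|$ by some $L$), the remainder is at most $L^2/(2\alpha)$. Hence, for $\alpha$ large, $\Psi_\alpha$ attains its sup at some $(x_\alpha,y_\alpha)$ in a fixed compact set, and standard arguments give $|x_\alpha-y_\alpha|\to 0$ as $\alpha\to\infty$. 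Maximality then produces the quadratic test function $P(x)=w_1(x_\alpha)+\tfrac{\alpha}{2}(|x-y_\alpha|^2-|x_\alpha-y_\alpha|^2)$ touching $w_1$ from above globally at $x_\alpha$, and similarly a $Q$ touching $w_2$ from below globally at $y_\alpha$. Lemma~\ref{lem:touchandcompute} (together with the asymptotic linearity of the $w_i$ inherited from $\phi$) then yields, classically,
$$\D w_1(x_\alpha)\geq w_1(x_\alpha)-\phi(x_\alpha)\qquad\text{and}\qquad \D w_2(y_\alpha)\leq w_2(y_\alpha)-\phi(y_\alpha).$$

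The heart of the matter is to show $\D w_1(x_\alpha)\leq \D w_2(y_\alpha)$. For any $\epsilon>0$, pick $A_\epsilon\in\mathpzc{M}$ with $L_{A_\epsilon}^s w_2(y_\alpha)\leq \D w_2(y_\alpha)+\epsilon$; trivially $\D w_1(x_\alpha)\leq L_{A_\epsilon}^s w_1(x_\alpha)$. The maximality of $\Psi_\alpha$ applied to the pairs $(x_\alpha\pm z,y_\alpha\pm z)$ (which preserve $|x-y|$) gives $\delta(w_1,x_\alpha,z)-\delta(w_2,y_\alpha,z)\leq 0$ for every $z\in\R^n$; since the kernel of $L_{A_\epsilon}^s$ is positive, integrating yields $L_{A_\epsilon}^s w_1(x_\alpha)-L_{A_\epsilon}^s w_2(y_\alpha)\leq 0$, and letting $\epsilon\searrow 0$ gives $\D w_1(x_\alpha)\leq \D w_2(y_\alpha)$. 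Combined with the pointwise inequalities above, $M\leq w_1(x_\alpha)-w_2(y_\alpha)\leq \phi(x_\alpha)-\phi(y_\alpha)\to 0$ as $\alpha\to\infty$ by continuity of $\phi$, a contradiction. The main obstacle I anticipate is the uniform compactness of the maximizers $(x_\alpha,y_\alpha)$: because $w_1,w_2$ themselves inherit the linear growth of $\phi$, the functional $\Psi_\alpha$ is not visibly coercive, and coercivity must be recovered through the cancellation $(w_1-\phi)-(w_2-\phi)$ at infinity together with the global Lipschitz control on $\phi$ arising from the cone-plus-decay structure.
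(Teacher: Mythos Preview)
The paper does not give its own proof of this statement; it is merely recorded as Theorem~\ref{thm:comparison principle} with a citation to \cite[Theorem~4.1]{Caffarelli-Charro}. So there is nothing in the paper to compare against.

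That said, your doubling-of-variables argument is the standard route and is essentially correct. The key nonlocal step---observing that maximality of $\Psi_\alpha$ at $(x_\alpha,y_\alpha)$ applied to the diagonal shifts $(x_\alpha\pm z,y_\alpha\pm z)$ yields $\delta(w_1,x_\alpha,z)\leq\delta(w_2,y_\alpha,z)$ for every $z$---is exactly right, and after Lemma~\ref{lem:touchandcompute} guarantees both integrands are in $L^1$, integration against the positive kernel $|A^{-1}z|^{-(n+2s)}$ gives $L_A^s w_1(x_\alpha)\leq L_A^s w_2(y_\alpha)$ for \emph{every} $A\in\mathpzc{M}$. You can therefore take the infimum over $A$ on both sides directly to obtain $\D w_1(x_\alpha)\leq\D w_2(y_\alpha)$; the auxiliary $A_\epsilon$ is unnecessary. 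The coercivity concern you flag is genuine but resolvable along the lines you indicate: from $\Psi_\alpha(x_\alpha,y_\alpha)\geq M$ one extracts $(w_1-\phi)(x_\alpha)-(w_2-\phi)(y_\alpha)\geq M-L^2/(2\alpha)$, and since both $w_i-\phi$ vanish at infinity this forces at least one of $x_\alpha,y_\alpha$ into a fixed compact set; the penalty term then traps the other.
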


%%%%%%%%%%%%%%%%%%%%%%%%%%%%%%%%%%%%%%%%%%%%%%%%%%%%%%
\subsection{The truncated fractional Monge--Amp\`ere operator}\label{subsection:truncated operator}
%%%%%%%%%%%%%%%%%%%%%%%%%%%%%%%%%%%%%%%%%%%%%%%%%%%%%%

For $\vep>0$, consider the class
\[
\mathpzc{M}_\vep = \big\{ A \in \mathpzc{M} : \langle A\xi,\xi\rangle \geq \vep|\xi|^2 \text{ for all } \xi \in \R^n\big\}.
\]
Since the matrices in $\mathpzc{M}$ have determinant one, not only are the eigenvalues of $A \in \mathpzc{M}_\vep$ bounded from below, but they are also bounded from above.
In particular,
\[
\vep|\xi|^2\leq \langle A\xi,\xi\rangle\leq \vep^{1-n}|\xi|^2 \quad
\hbox{for every}~\xi\in\R^n.
\] 
We define
\[
\D^\vep u(x)=\inf_{A\in\mathpzc{M}_\vep}L_A^su(x).
\]
The kernels of $L_A^s$, for $A\in\mathpzc{M}_\vep$, satisfy
\begin{equation}
\label{eq:minmaxeigenvalues}
\frac{\vep^{n+2s}}{|y|^{n+2s}}\leq\frac{1}{|A^{-1}y|^{n+2s}}\leq\frac{\vep^{(1-n)(n+2s)}}{|y|^{n+2s}}.
\end{equation}
Therefore, the truncated operator $\D^\vep$ is uniformly elliptic
in the sense of Caffarelli--Silvestre (see Lemma \ref{lem:continuityofLAs} and
\cite[Definition~3.1,~Lemma 3.2]{Caffarelli-Silvestre-CPAM}).
We define the notion of viscosity subsolution, supersolution, and solution for $\D^\vep$ exactly as in
Definition \ref{defn:viscosity solution}. Moreover, by \cite[Lemma~3.3]{Caffarelli-Silvestre-CPAM},
Lemma \ref{lem:touchandcompute} also holds for $\D^\vep$ in place of $\D$.
Obviously, $\D^0=\D$.

Caffarelli and Charro proved in \cite[Theorem~3.1]{Caffarelli-Charro} that the operator $\D$ becomes uniformly elliptic
provided that $\D w$ is bounded below away from zero and $w$ is globally Lipschitz
and semiconcave. The next statement presents an important refinement that will be crucial to proving our results.

\begin{thm}\label{thm:our-ellipticity}
Let $\eta_0$, $L$, and $C$ be positive constants, and fix an open set $\mathcal{O} \subset \R^n$.
There exists $\lambda=\lambda(n,s,\eta_0,L,C)>0$ such that for any
Lipschitz and semiconcave function $w$ with constants $L$ and $C$, respectively, if $0\leq\vep<\lambda$ and
\[
\D^\vep w\geq\eta_0>0\quad\hbox{in}~\mathcal{O}
\]
in the viscosity sense, then
\[
\D^\vep w(x)=\D^\lambda w(x)\quad\hbox{for every}~x\in\mathcal{O}
\]
in the classical sense.
\end{thm}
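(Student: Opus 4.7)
The plan is to show that for $\lambda$ sufficiently small (depending only on $n, s, \eta_0, L, C$), any matrix $A \in \mathpzc{M}_\vep \setminus \mathpzc{M}_\lambda$ fails to achieve the infimum defining $\D^\vep w(x)$ at any $x \in \mathcal{O}$. Since $\mathpzc{M}_\lambda \subset \mathpzc{M}_\vep$ when $\vep < \lambda$, this combined with the compactness of $\mathpzc{M}_\lambda$ and the continuity of $A \mapsto L_A^s w(x)$ from Lemma \ref{lem:continuityofLAs} would yield $\D^\vep w(x) = \D^\lambda w(x)$ classically.

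First I would pass from the viscosity inequality to a pointwise classical inequality at a fixed $x_0 \in \mathcal{O}$. The semiconcavity of $w$ with constant $C$ produces a paraboloid $P(y) = w(x_0) + p \cdot (y - x_0) + (C/2)|y - x_0|^2$ (with $p$ in the superdifferential of $w$ at $x_0$) that touches $w$ from above at $x_0$ in some neighborhood $\mathcal{N}$. Applying the viscosity hypothesis to the test function $\vth$ defined as in \eqref{eq:vth} and using the $\D^\vep$-version of Lemma~\ref{lem:touchandcompute}, I would obtain $L_A^s \vth(x_0) \geq \eta_0$ classically for every $A \in \mathpzc{M}_\vep$.

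The crux is a quantitative estimate, essentially the content of the proof of \cite[Theorem~3.1]{Caffarelli-Charro} in the case $\vep = 0$: there should exist $\lambda = \lambda(n, s, \eta_0, L, C) > 0$ such that for every $A \in \mathpzc{M}$ with $\lambda_{\min}(A) < \lambda$, the value $L_A^s \vth(x_0)$ strictly exceeds $\D^\lambda \vth(x_0)$. To establish this, I would use the change of variables $y = Az$ to rewrite $L_A^s \vth(x_0) = (c_{n,s}/2)\int \delta(\vth, x_0, Az)/|z|^{n+2s}\,dz$, and split the integration domain according to where $|Az|$ lies with respect to the natural transition length $L/C$: for small $|Az|$, use the bound $\delta \leq C|Az|^2$ coming from the touching paraboloid; for large $|Az|$, use the Lipschitz bound $|\delta| \leq 2L|Az|$. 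When $\lambda_{\min}(A) \to 0$, the constraint $\det A = 1$ forces at least one eigenvalue of $A$ to diverge, so the integrand probes $w$ at arbitrarily large distances along the corresponding eigendirection; combined with the standing bound $L_A^s \vth(x_0) \geq \eta_0$, this should yield a quantitative lower bound on $L_A^s \vth(x_0)$ calibrated in terms of $\eta_0, L, C$.

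The key observation for the refinement over \cite{Caffarelli-Charro} is that $\lambda$ depends on $n, s, \eta_0, L, C$ but \emph{not} on $\vep$, so the same threshold works for every $0 \leq \vep < \lambda$. Every $A \in \mathpzc{M}_\vep \setminus \mathpzc{M}_\lambda$ is then excluded from the infimum, and by compactness of $\mathpzc{M}_\lambda$ together with the continuity of $A \mapsto L_A^s \vth(x_0)$, the infimum would be attained at some $A_\star \in \mathpzc{M}_\lambda$, giving $\D^\vep w(x_0) = \D^\lambda w(x_0)$ classically. The hard part will be the quantitative anisotropic integral estimate described above: extracting a useful lower bound on $L_A^s \vth(x_0)$ as $\lambda_{\min}(A) \to 0$ out of only the Lipschitz and semiconcavity bounds on $w$ requires careful bookkeeping of constants, and the hypothesis $L_A^s \vth \geq \eta_0$ has to be used delicately to rule out degenerate cancellations in the limit.
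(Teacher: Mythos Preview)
Your overall plan coincides with the paper's: both reduce to tracking the constants in the proof of \cite[Theorem~3.1]{Caffarelli-Charro} and isolating that the resulting threshold $\lambda$ depends only on $n,s,\eta_0,L,C$ and not on $\vep$, so that the same $\lambda$ works uniformly for all $0\le\vep<\lambda$. The paper's proof is in fact nothing more than a pointer to \cite{Caffarelli-Charro} with the explicit choice $\lambda=\min\{\epsilon,\theta,1\}$, where $\epsilon$ and $\theta$ are specific constants appearing in \cite{Caffarelli-Charro}.

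One point where your sketch diverges from the actual mechanism in \cite{Caffarelli-Charro}: you frame the ``crux'' as establishing directly that $L_A^s\vth(x_0)>\D^\lambda\vth(x_0)$ for every $A$ with $\lambda_{\min}(A)<\lambda$, via an anisotropic integral estimate. As written this is slightly circular (you are choosing $\lambda$ in terms of a quantity $\D^\lambda\vth$ that already depends on $\lambda$), and a lower bound on $L_A^s\vth$ alone cannot furnish the comparison without a matching upper bound on $\D^\lambda\vth$. The route actually taken in \cite{Caffarelli-Charro}, and invoked verbatim by the paper, passes instead through the one\nobreakdash-dimensional directional fractional Laplacians: \cite[Lemma~3.9, Proposition~3.3]{Caffarelli-Charro} convert the hypothesis $\D^\vep w\ge\eta_0$ into a uniform lower bound $-(-\Delta)^s_{\e}w\ge\mu_0$ for all $\e\in\S^{n-1}$, and then \cite[Proposition~3.5]{Caffarelli-Charro} upgrades this directional bound to $\D^\vep w=\D^\lambda w$. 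Since you explicitly defer the hard estimate to \cite{Caffarelli-Charro} anyway, this is not a gap in your proposal so much as an inaccuracy in your description of what that argument does; if you attempt to carry out your direct comparison instead, be aware that it is this two\nobreakdash-step detour through the directional operators that makes the constants close up.
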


\begin{proof}
The proof follows by precisely tracking the constants in the proof of Theorem 3.1 in \cite{Caffarelli-Charro}.
Following their notation, fix
\[
\epsilon=\bigg(\frac{\eta_0}{2C_1}\bigg)^{\frac{n-1}{2s}}\quad\hbox{and}
\quad0<\theta<\bigg(\frac{\mu_0}{n\mu_1}\bigg)^{\frac{n-1}{2s}}
\]
(this value of $\epsilon$ is not to be confused with $0\leq\vep<\lambda$ in our hypotheses).
Then, $\epsilon$ and $\theta$ depend only on $n,s,\eta_0,L$, and $C$. Choose $\lambda=\min\{\epsilon,\theta,1\}$.
We notice that if $0\leq \vep<\lambda$, then we can apply \cite[Lemma~3.9]{Caffarelli-Charro}
to deduce Proposition 3.3 in \cite{Caffarelli-Charro}
with $\D^\vep$ in place of $\D$.
Thus, the statement of \cite[Proposition~3.5]{Caffarelli-Charro},
being a simple consequence of Proposition 3.3,
holds in our setting.
We end our proof exactly as in the proof of Theorem 3.1 in \cite[pp.~12-13]{Caffarelli-Charro},
in which $\theta=1/k$.
\end{proof}

We close this section with some continuity and stability results,
whose proofs follow as in \cite[Propositions~2.4~and~2.6]{Silvestre-CPAM} and 
\cite[Lemma~4.5]{Caffarelli-Silvestre-CPAM} by using that $w$ satisfies \eqref{eq:Ls} instead of being just bounded.

\begin{lem}\label{lem:continuityofLAs}
Let $\mathcal{O}$ be an open set, $1/2<s<1$, and $w\in L^1_{\mathrm{loc}}(\R^n)$ be such that
\begin{equation}\label{eq:Ls}
\int_{\R^n}\frac{|w(x)|}{(1+|x|)^{n+2s}}\,dx<\infty.
\end{equation}
Suppose that $w\in C^{1,2s+\mu-1}(\mathcal{O})$ for some $\mu>0$.
Then, for any positive definite symmetric matrix $A$ of size $n\times n$, $L_A^sw\in C^{\mu}(\mathcal{O})$ and
\[
[L_A^sw]_{C^\mu(\mathcal{O})}\leq C[\nabla w]_{C^{2s+\mu-1}(\mathcal{O})},
\]
where $C > 0$ depends only on $n,s,\mu$
and the largest eigenvalue of $A$. In particular, if $\vep>0$, then
\begin{equation}
\label{eqn: equicty}
\hbox{the family}~\{L_A^sw:A\in\mathpzc{M}_{\vep}\}~\hbox{is equicontinuous in}~\mathcal{O}.
\end{equation}
Consequently, by taking the infimum over $A\in\mathpzc{M}_{\vep}$ above,
\[
\D^\vep w\in C(\mathcal{O}).
\]
\end{lem}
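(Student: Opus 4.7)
My plan is to reduce the estimate to the classical fractional Laplacian via the identity $L_A^sw(x)=-(-\Delta)^s[w\circ A](A^{-1}x)$ from \eqref{eq:LAS} and then apply the local $C^\mu$ regularity estimate of Silvestre \cite[Proposition~2.4]{Silvestre-CPAM}, modified so that the boundedness of the function is replaced by the tail condition \eqref{eq:Ls}, as in \cite[Lemma~4.5]{Caffarelli-Silvestre-CPAM}. Set $\tilde w:=w\circ A$ and $\tilde{\mathcal{O}}:=A^{-1}\mathcal{O}$. The chain rule gives $\tilde w\in C^{1,2s+\mu-1}(\tilde{\mathcal{O}})$ with
\[
[\nabla\tilde w]_{C^{2s+\mu-1}(\tilde{\mathcal{O}})}\leq \lambda_{\max}(A)^{2s+\mu}\,[\nabla w]_{C^{2s+\mu-1}(\mathcal{O})},
\]
and the linear change of variable $z=Ay$ in \eqref{eq:Ls} shows that $\tilde w$ satisfies the analogous tail bound, with a constant depending only on $n,s$ and the eigenvalues of $A$.

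Next, I would run Silvestre's argument on $\tilde w$. Given $\tilde x_1,\tilde x_2\in\tilde{\mathcal{O}}$ with $r:=|\tilde x_1-\tilde x_2|$ so small that $B_{2r}(\tilde x_i)\subset\tilde{\mathcal{O}}$, I would split the symmetric integral representation of $(-\Delta)^s\tilde w(\tilde x_i)$ at radius $2r$. The near-field piece is controlled using the bound $|\delta(\tilde w,\tilde x_i,y)|\leq[\nabla\tilde w]_{C^{2s+\mu-1}(\tilde{\mathcal{O}})}|y|^{2s+\mu}$, which integrates against $|y|^{-n-2s}$ to produce a factor of $r^\mu$. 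The far-field piece is handled by exploiting the Lipschitz dependence of the kernel $|y|^{-n-2s}$ on its base point (after a principal-value rewriting) together with the tail bound on $\tilde w$; this contribution is of order $r$ and is absorbed into $r^\mu$ for $r$ small. The only departure from \cite[Proposition~2.4]{Silvestre-CPAM} is that boundedness of $\tilde w$ is replaced by the tail condition in estimating the far field, exactly as in \cite[Lemma~4.5]{Caffarelli-Silvestre-CPAM}. Undoing the change of variable then yields the claimed H\"older estimate on $L_A^sw$, the constant depending on $n,s,\mu$ and the eigenvalues of $A$.

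When $A\in\mathpzc{M}_\vep$, the eigenvalues of $A$ are pinched between $\vep$ and $\vep^{1-n}$, see \eqref{eq:minmaxeigenvalues}, so the H\"older constant above can be chosen uniformly over $\mathpzc{M}_\vep$; this is exactly \eqref{eqn: equicty}. Pointwise boundedness of the family at any fixed $x_0\in\mathcal{O}$ follows from the same near/far split, using \eqref{eq:minmaxeigenvalues} to dominate $|A^{-1}y|^{-n-2s}$ by a constant multiple of $|y|^{-n-2s}$ and then invoking \eqref{eq:Ls} to control the tail of $|w|$. Consequently, $\D^\vep w=\inf_{A\in\mathpzc{M}_\vep}L_A^sw$ is real-valued and inherits the common local modulus of continuity of the family, which gives $\D^\vep w\in C(\mathcal{O})$.

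The main technical point, and the only genuine deviation from the cited references, is verifying that Silvestre's far-field estimate survives when the $L^\infty$-bound on $w$ is replaced by the weighted $L^1$ bound \eqref{eq:Ls}: one needs to check that the Lipschitz increment of the kernel, paired with the tail of $|w|$, still integrates to a quantity linear in $r$. Once this is in place, the rest of the proof is a careful bookkeeping of how the constants depend on the eigenvalues of $A$, which is trivial in the regime $A\in\mathpzc{M}_\vep$.
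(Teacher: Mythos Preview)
Your proposal is correct and follows exactly the route the paper itself indicates: the paper does not give a detailed proof of this lemma but simply remarks that it ``follows as in \cite[Propositions~2.4~and~2.6]{Silvestre-CPAM} and \cite[Lemma~4.5]{Caffarelli-Silvestre-CPAM} by using that $w$ satisfies \eqref{eq:Ls} instead of being just bounded,'' which is precisely your reduction via \eqref{eq:LAS} to Silvestre's near/far splitting with the tail condition substituted for boundedness. One small remark: by working directly with the kernel $|A^{-1}y|^{-n-2s}\le \lambda_{\max}(A)^{n+2s}|y|^{-n-2s}$ rather than passing through the change of variable $\tilde w=w\circ A$, you avoid picking up $\det A$ in the tail estimate and recover the paper's slightly sharper claim that the constant depends only on the \emph{largest} eigenvalue of $A$; your version, with dependence on all eigenvalues, is of course still sufficient for the application to $\mathpzc{M}_\vep$.
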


We say that a sequence $w_k\in\LSC(\R^n)$, $k\geq1$, $\Gamma$-converges to $w$ in a set $\mathcal{O}$ if the following two conditions hold:
\begin{itemize}
\item[-] For any sequence $x_k\to x$ in $\mathcal{O}$, $\liminf_{k\to\infty}w_k(x_k)\geq w(x)$.
\item[-] For any $x\in\mathcal{O}$, there is a sequence $x_k\to x$ in $\mathcal{O}$ so that $\limsup_{k\to\infty}w_k(x_k)=w(x)$.
\end{itemize}

\begin{lem}\label{lem:gammaconvergence}
Let $w_k\in\LSC(\R^n)\cap L^1_{\mathrm{loc}}(\R^n)$ be a sequence of functions such that
\[
\int_{\R^n}\frac{|w_k(x)|}{(1+|x|)^{n+2s}}\,dx\leq C<\infty
\]
for all $k\geq1$. Let $I$ be either $L_A^s$, for $A\in\mathpzc{M}$, or $\D^\vep$, for $\vep>0$,
for any $1/2<s<1$. Suppose that
\begin{itemize}
\item[-] $Iw_k\leq f_k$ in $\mathcal{O}$;
\item[-] $w_k\to w$ in the $\Gamma$ sense in $\mathcal{O}$;
\item[-] $w_k\to w$ a.e. in $\R^n$; and
\item[-] $f_k\to f$ locally uniformly in $\mathcal{O}$ for some continuous function $f$.
\end{itemize}
Then,
\[
Iw\leq f\quad\hbox{in}~\mathcal{O}.
\]
\end{lem}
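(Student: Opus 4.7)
The plan is to adapt the standard $\Gamma$-convergence stability argument for nonlocal operators (cf.\ \cite[Propositions~2.4~and~2.6]{Silvestre-CPAM} and \cite[Lemma~4.5]{Caffarelli-Silvestre-CPAM}), with the weighted integrability \eqref{eq:Ls} playing the role that pointwise boundedness plays in those references. I focus on $I = L_A^s$ for a fixed $A \in \mathpzc{M}$; the $\D^\vep$ case will reduce to this using the compactness of $\mathpzc{M}_\vep$ (the determinant constraint together with the lower bound $\vep$ on eigenvalues forces the upper bound $\vep^{1-n}$ as well).

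First I produce contact points for the $w_k$. Let $P$ be a $C^2$ function touching $w$ from below at $x_0 \in \mathcal{O}$ in an open neighborhood $\mathcal{N}$, which by Remark \ref{rmk: viscosity nhd} I take to be a small ball $B_{r_0}(x_0) \subset \mathcal{O}$. Since $P(x) < w(x)$ strictly on $\mathcal{N} \setminus \{x_0\}$ by Definition \ref{defn:viscosity solution}, the lower semicontinuous function $w_k - P$ attains its minimum over $\overline{B_{r_0/2}(x_0)}$ at some point $x_k$ with value $c_k$, and $P + c_k$ then touches $w_k$ from below at $x_k$ inside $B_{r_0/2}(x_0)$. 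A recovery sequence $y_k \to x_0$ with $w_k(y_k) \to w(x_0)$ supplied by the $\Gamma$-convergence definition yields $c_k \leq w_k(y_k) - P(y_k) \to 0$; along any subsequential limit $(x_{k_j}, c_{k_j}) \to (x^*, c^*)$ the liminf property gives $w(x^*) \leq P(x^*) + c^*$, and strict touching forces $x^* = x_0$ and $c^* = 0$. By the hypothesis $Iw_k \leq f_k$ and Lemma \ref{lem:touchandcompute} (plus the analogous result for $\D^\vep$ from \cite[Lemma~3.3]{Caffarelli-Silvestre-CPAM}), the inequality $I\vartheta_k(x_k) \leq f_k(x_k)$ holds in the classical sense, where $\vartheta_k$ equals $P + c_k$ on $B_{r_0/2}(x_0)$ and $w_k$ elsewhere.

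Next I pass to the limit. With $\vartheta$ defined analogously from $P$ and $w$, it suffices to prove $I\vartheta(x_0) \leq \liminf_k I\vartheta_k(x_k)$ and combine this with $f_k(x_k) \to f(x_0)$. Splitting the integral representation of $L_A^s$ at $|z| = r$, the inner part $|z| < r$ involves only $\delta(P, x_k, z)$ and converges by dominated convergence via $|\delta(P, x_k, z)| \leq \|D^2 P\|_{L^\infty(\mathcal{N})} |z|^2$. On the outer part $|z| \geq r$, the pointwise bound
\[
\liminf_{k \to \infty} \delta(\vartheta_k, x_k, z) \geq \delta(\vartheta, x_0, z) \quad \text{for a.e.\ } z
\]
follows from $\vartheta_k(x_k) \to \vartheta(x_0)$, from $\vartheta_k(x_k \pm z) \to \vartheta(x_0 \pm z)$ when $x_0 \pm z \in \mathcal{N}$ (an open condition together with continuity of $P$), and from $\liminf_k w_k(x_k \pm z) \geq w(x_0 \pm z)$ when $x_0 \pm z \in (\overline{\mathcal{N}})^c$ (open, via the liminf property of $\Gamma$-convergence); the null set $\partial \mathcal{N}$ is harmless. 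A Fatou argument then yields the outer liminf inequality, the required uniform integrable lower bound on $\delta(\vartheta_k, x_k, z)/|A^{-1}z|^{n+2s}$ being produced by combining boundedness of $P$ on $\overline{\mathcal{N}}$, the comparability $|A^{-1}z|^{n+2s} \gtrsim (1 + |x_k + z|)^{n+2s}$ valid for $|z| \geq r$ and $k$ large, and the uniform weighted bound on $w_k$ from \eqref{eq:Ls} transferred via the change of variables $y = x_k + z$. Sending $r \to 0$ completes the argument for $I = L_A^s$. For $I = \D^\vep$, I pick near-minimizers $A_k \in \mathpzc{M}_\vep$ with $L_{A_k}^s \vartheta_k(x_k) \leq f_k(x_k) + 1/k$, extract $A_k \to A_*$ along a subsequence by compactness of $\mathpzc{M}_\vep$, and re-run the argument noting that \eqref{eq:minmaxeigenvalues} makes the kernels $|A_k^{-1} z|^{-(n+2s)}$ uniformly comparable to $|z|^{-(n+2s)}$ and pointwise convergent to the kernel of $L_{A_*}^s$.

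The main obstacle is producing the uniform-in-$k$ integrable lower bound on the outer integrand so that Fatou applies. This is the exact place where \eqref{eq:Ls} has to substitute for the pointwise boundedness used in \cite{Silvestre-CPAM,Caffarelli-Silvestre-CPAM}, and doing so hinges on the change of variables $y = x_k + z$ and the comparability of the nonlocal kernel to $(1+|y|)^{-(n+2s)}$ on the tail, which is precisely where the constraint $s > 1/2$ and the weighted integrability hypothesis interact.
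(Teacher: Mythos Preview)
Your proposal carries out exactly what the paper indicates: the lemma is stated there without proof, with only the remark that it follows as in \cite[Propositions~2.4~and~2.6]{Silvestre-CPAM} and \cite[Lemma~4.5]{Caffarelli-Silvestre-CPAM} upon replacing pointwise boundedness by the weighted integrability \eqref{eq:Ls}. Your architecture (produce touching points via $\Gamma$-convergence, split the kernel, dominated convergence near the origin, Fatou with a uniform integrable minorant on the tail, compactness of $\mathpzc{M}_\vep$ for the infimum case) is precisely that adaptation and is correct in outline.

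One step needs adjustment. For the pointwise liminf on the tail you invoke the $\Gamma$-liminf property at $x_0\pm z\in(\overline{\mathcal N})^c$, but $\Gamma$-convergence is only assumed in $\mathcal{O}$, and $x_0\pm z$ need not lie there. The fix is already in your hands: perform the change of variables $y=x_k\pm z$ for the pointwise convergence as well, not only for the integrable bound. After that substitution the tail integrand involves $w_k(y)$ at a \emph{fixed} $y$, and the hypothesis $w_k\to w$ a.e.\ in $\R^n$ yields genuine a.e.\ convergence of the integrand, so Fatou applies directly. Relatedly, your comparability claim $|A^{-1}z|^{n+2s}\gtrsim(1+|x_k+z|)^{n+2s}$ is not literally true for all $|z|\geq r$; after the change of variables the correct bookkeeping is to split $\{|y-x_k|\geq r\}$ into a bounded region (controlled by $r^{-(n+2s)}$ together with the uniform local $L^1$ bound implied by \eqref{eq:Ls}) and the far region where $|y-x_k|\gtrsim 1+|y|$.
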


An analogous statement to Lemma \ref{lem:gammaconvergence} holds for subsolutions.

%%%%%%%%%%%%%%%%%%%%%%%%%%%%%%%%%%%%%%%%%%%%%%%%%%%%%%
\section{Proof of Theorem \ref{thm: solution of obstacle problem}}
\label{Section:Existence}
%%%%%%%%%%%%%%%%%%%%%%%%%%%%%%%%%%%%%%%%%%%%%%%%%%%%%%

To construct the solution $u$, we define the class
\begin{equation}
\label{eq:F}
\mathpzc{F}=\Big{\{} w\in \USC(\R^n):\D w\geq w-\phi~\hbox{in}~\R^n,
\,w\leq\psi,\,\hbox{and}\,\lim_{|x|\to\infty}(w-\phi)(x)\leq0 \Big{\}}.
\end{equation}
Notice that $\mathpzc{F}$ is nonempty
because $\phi\in\mathpzc{F}$.
Indeed, by assumption, $\phi<\psi$ in $\R^n$, and by convexity, $\delta(\phi,x,y)\geq0$ for every $x,y\in\R^n$.
Hence, $L_A^s\phi\geq0$ in $\R^n$ for every $A\in\mathpzc{M}$, which implies that $\D\phi\geq0=\phi-\phi$ on $\R^n$.

Now, define
\begin{equation}
\label{eq: u}
u(x)=\big(\sup\big\{w(x):w\in\mathpzc{F}\big\}\big)^\ast \quad\hbox{for}~x\in\R^n.
\end{equation}
By construction,
\[
u\in \USC(\R^n), \quad \phi \leq u\leq\psi, \quad\hbox{and}\quad \lim_{|x|\to\infty}(u-\phi)(x)=0.
\]
In particular,
\[
\int_{\R^n}\frac{|u(x)|}{(1+|x|)^{n+2s}}\,dx<\infty.
\]
Moreover, since $u-\psi$ is upper semicontinuous in $\R^n$, we have that
\begin{equation}
\label{eq:non coincidence set}
\hbox{the noncoincidence set}~\{u<\psi\}~\hbox{is open}.
\end{equation}

First, we will show that $u$, as defined in \eqref{eq: u}, is in the class $\mathpzc{F}$, see Lemma \ref{lem: u subsolution}.
We start with two lemmas.

\begin{lem}
\label{lem: max of subsolutions}
Let $w_1, w_2 \in \mathpzc{F}$.
Then,
\[
w(x)= \max\{w_1(x),w_2(x)\} \in \mathpzc{F}.
\]
\end{lem}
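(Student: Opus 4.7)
The plan is to check each of the four defining conditions of $\mathpzc{F}$ for $w = \max\{w_1, w_2\}$. Three of them are immediate: the maximum of two upper semicontinuous functions is upper semicontinuous, $w \leq \psi$ since $w_i \leq \psi$ for $i=1,2$, and $\limsup_{|x|\to\infty}(w-\phi)(x) = \max_i \limsup_{|x|\to\infty}(w_i - \phi)(x) \leq 0$. The only nontrivial condition is the viscosity subsolution inequality $\D w \geq w - \phi$ in $\R^n$.

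To verify this, I would fix any triplet $(P, x_0, \mathcal{N})$ as in Definition \ref{defn:viscosity solution}, so $P$ is a $C^2$ function touching $w$ from above at $x_0$ in $\mathcal{N}$ with $P(x) > w(x)$ for $x \in \mathcal{N}\setminus\{x_0\}$. Without loss of generality, $w(x_0) = w_1(x_0)$. Since $P \geq w \geq w_1$ on $\mathcal{N}$ with strict inequality away from $x_0$ and equality $P(x_0) = w_1(x_0)$, the same triplet $(P, x_0, \mathcal{N})$ is admissible in testing the subsolution property of $w_1$. Let
\[
\vartheta(x) = \begin{cases} P(x) & x \in \mathcal{N} \\ w(x) & x \in \R^n\setminus\mathcal{N} \end{cases}
\quad\text{and}\quad
\vartheta_1(x) = \begin{cases} P(x) & x \in \mathcal{N} \\ w_1(x) & x \in \R^n\setminus\mathcal{N} \end{cases}
\]
as prescribed by \eqref{eq:vth}.

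The key observation is that $\vartheta \geq \vartheta_1$ on $\R^n$ (since $w \geq w_1$) while $\vartheta(x_0) = \vartheta_1(x_0) = P(x_0)$. Consequently $\delta(\vartheta, x_0, y) \geq \delta(\vartheta_1, x_0, y)$ for every $y \in \R^n$, which gives $L_A^s \vartheta(x_0) \geq L_A^s \vartheta_1(x_0)$ for every $A \in \mathpzc{M}$, and therefore $\D \vartheta(x_0) \geq \D \vartheta_1(x_0)$ after taking the infimum over $\mathpzc{M}$, as in Remark \ref{rmk: viscosity nhd}. Since $w_1 \in \mathpzc{F}$ and $P$ touches $w_1$ from above at $x_0$ in $\mathcal{N}$, we have $\D \vartheta_1(x_0) \geq \vartheta_1(x_0) - \phi(x_0) = w_1(x_0) - \phi(x_0) = w(x_0) - \phi(x_0) = \vartheta(x_0) - \phi(x_0)$. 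Chaining the two inequalities yields the required subsolution condition for $w$.

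The only potential obstacle is the one addressed above: the nonlocal test function $\vartheta$ built from $w$ differs from the nonlocal test function $\vartheta_1$ built from $w_1$ in the region $\R^n \setminus \mathcal{N}$, so one cannot immediately invoke the subsolution property of $w_1$ alone. The monotonicity of the linear operators $L_A^s$ (and hence of the concave infimum $\D$) with respect to pointwise increase of the extension outside the touching neighborhood is precisely what makes the argument go through.
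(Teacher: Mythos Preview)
Your proof is correct and follows essentially the same approach as the paper's: after dispatching the three immediate conditions, both arguments assume without loss of generality that $w(x_0)=w_1(x_0)$, build the two test functions $\vartheta$ and $\vartheta_1$, and use the pointwise inequality $\vartheta\geq\vartheta_1$ with equality at $x_0$ to push the second-order increment comparison through each $L_A^s$ and hence through $\D$. The only cosmetic difference is that the paper writes the chain as $L_A^s\vartheta(x_0)\geq L_A^s\vartheta_1(x_0)\geq \D\vartheta_1(x_0)\geq \vartheta_1(x_0)-\phi(x_0)$ and then takes the infimum over $A$, whereas you first pass to $\D\vartheta(x_0)\geq\D\vartheta_1(x_0)$ and then invoke the subsolution property of $w_1$; the content is identical.
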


\begin{proof}
Evidently, $w\in\USC(\R^n)$, $w \leq \psi$, and $\lim_{|x| \to \infty} (w-\phi)(x) \leq 0$.
Let $P$ be a $C^2$ function touching $w$ from above at $x_0$ in $\mathcal{N}$.
Without loss of generality, $P(x_0)=w(x_0)=w_1(x_0)$, so $P$ also touches $w_1$
from above at $x_0$ in $\mathcal{N}$. 
Let
\[
\vth(x)=
\begin{cases}
P(x)&\hbox{if}~x\in \mathcal{N}\\
w(x)&\hbox{if}~x\in\R^n\setminus \mathcal{N}
\end{cases}
\quad\text{and}\quad
\vth_1(x)=
\begin{cases}
P(x)&\hbox{if}~x\in \mathcal{N}\\
w_1(x)&\hbox{if}~x\in\R^n\setminus \mathcal{N}.
\end{cases}
\]
Observe that $\vth(x_0)=\vth_1(x_0)$ and $\vth \geq \vth_1$ in $\R^n$, from which it follows that
$\delta(\vth,x_0,y) \geq \delta(\vth_1,x_0,y)$ for any $y\in\R^n$.
Therefore, given any matrix $A\in\mathpzc{M}$,
\[
L_A^s\vth(x_0)\geq L_A^s \vth_1(x_0) \geq \D \vth_1(x_0) \geq \vth_1(x_0)-\phi(x_0)=\vth(x_0)-\phi(x_0).
\]
Thus, $\D\vth(x_0)\geq \vth(x_0)-\phi(x_0)$, and $w$ is a subsolution to $\D w\geq w-\phi$ in $\R^n$.
\end{proof}

\begin{lem}
\label{lem: gamma}
Let $u$ be as in \eqref{eq: u} and let $P$ be a $C^2$ function touching
$u$ from above at $x_0$ in $\Nhd$.
Given any open neighborhood $\Nhd'\subset\subset\Nhd$ that contains $x_0$,
there exist functions $u_k \in \mathpzc{F}$, points $x_k \in \Nhd'$, and constants $d_k>0$, for $k\geq1$, such that
\[
u_k \leq u_{k+1},\quad x_k\to x_0,\quad d_k\searrow0,\quad u_k(x_k)\to u(x_0),
\]
and
\begin{equation}
\label{eq:Pky}
P_k(y)=P(y)+\frac{|y-x_k|^2}{k}-d_k\quad\hbox{touches}~u_k~\hbox{from above at}~x_k~\hbox{in}~\Nhd'.
\end{equation}
\end{lem}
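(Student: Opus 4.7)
My strategy is to build the $u_k$'s as a uniformly downward-shifted version of the running maxima of a sequence in $\mathpzc{F}$ approximating $u$ near $x_0$, and then to take $x_k$ as a minimizer of $P - u_k$ over $\overline{\Nhd'}$, with $d_k$ the minimum value. The quadratic bump $|y - x_k|^2/k$ automatically upgrades the minimum to a strict touching.

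\textbf{Step 1 (construction of $u_k$).} Write $U(x) = \sup_{w \in \mathpzc{F}} w(x)$, so that $u = U^*$. Remark \ref{rmk: property 1 sc envelope} supplies $y_k \to x_0$ with $U(y_k) \to u(x_0)$, and for each $k$ I pick $w_k \in \mathpzc{F}$ with $w_k(y_k) \geq U(y_k) - 1/k$. Lemma \ref{lem: max of subsolutions} and induction put $\max\{w_1,\dots,w_k\} \in \mathpzc{F}$. I set
\[
u_k := \max\{w_1,\dots,w_k\} - \tfrac{1}{k}.
\]
Since $\D(w-c) = \D w$, $\psi - c \leq \psi$, and $\lim_{|x|\to\infty}(w-c-\phi) = -c \leq 0$ for $c > 0$, subtraction of $1/k$ preserves membership in $\mathpzc{F}$. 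The sequence $u_k$ is strictly increasing because $\tfrac{1}{k} - \tfrac{1}{k+1} > 0$, and by construction $u_k \leq u - \tfrac{1}{k}$ pointwise and $u_k(y_k) \to u(x_0)$.

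\textbf{Step 2 (selection of $x_k$, $d_k$, and their limiting behavior).} Consider $V_k := P - u_k$ on the compact set $\overline{\Nhd'}$. Since $u_k \in \USC(\R^n)$, $V_k$ is lower semicontinuous, hence attains its minimum $d_k := V_k(x_k)$ at some $x_k \in \overline{\Nhd'}$. The pointwise inequality $u_k \leq u - 1/k \leq P - 1/k$ in $\Nhd$ (recall $P \geq u$ on $\Nhd$) yields $d_k \geq 1/k > 0$. On the other hand,
\[
V_k(y_k) = P(y_k) - \max\{w_1,\dots,w_k\}(y_k) + \tfrac{1}{k} \longrightarrow P(x_0) - u(x_0) = 0,
\]
giving $d_k \to 0$. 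Since $P - u$ is lower semicontinuous and strictly positive on the compact set $\partial \Nhd'$, there exists $m > 0$ with $V_k \geq m$ on $\partial \Nhd'$; combined with $d_k \to 0$, this forces $x_k \in \Nhd'$ for $k$ large. Finally, if $\xi \in \overline{\Nhd'}$ is any subsequential limit of $x_k$, then upper semicontinuity of $u$ and continuity of $P$ give
\[
0 = \lim V_k(x_k) \geq P(\xi) - \limsup u_k(x_k) \geq P(\xi) - u(\xi) \geq 0,
\]
so $P(\xi) = u(\xi)$ and hence $\xi = x_0$; thus $x_k \to x_0$.

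\textbf{Step 3 (touching and monotonicity of $d_k$).} By definition $P_k(x_k) = P(x_k) - d_k = u_k(x_k)$, and for $y \in \Nhd' \setminus \{x_k\}$,
\[
P_k(y) - u_k(y) = V_k(y) - d_k + \tfrac{|y - x_k|^2}{k} \geq \tfrac{|y - x_k|^2}{k} > 0,
\]
so $P_k$ touches $u_k$ from above at $x_k$ in $\Nhd'$. Passing to a subsequence (which preserves monotonicity of $u_k$ and all limits) arranges $d_k \searrow 0$ as required.

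\textbf{Main obstacle.} The delicate point is arranging $d_k > 0$ strictly. Without the downward shift by $1/k$, the minimum of $P - u_k$ on $\overline{\Nhd'}$ would vanish whenever $u_k(x_0) = u(x_0)$, i.e., whenever $P$ happens to touch $u_k$ itself from above at $x_0$. This scenario cannot be excluded \emph{a priori}, since at this stage the separation $u > \phi$ (a key conclusion of Theorem \ref{thm: solution of obstacle problem}) has not yet been established. The uniform downward shift creates the pointwise gap $u_k \leq u - 1/k$, which forces $d_k \geq 1/k$, resolving the issue cleanly while keeping $d_k \to 0$.
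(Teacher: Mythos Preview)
Your argument is correct and follows essentially the same route as the paper: build an increasing sequence $u_k\in\mathpzc{F}$ approximating $u$ near $x_0$, let $d_k$ be the minimum of $P-u_k$ on $\overline{\Nhd'}$, and take $x_k$ a minimizer. Two small remarks: the final subsequence extraction is unnecessary, since $u_k\leq u_{k+1}$ already forces $d_k=\inf_{\overline{\Nhd'}}(P-u_k)$ to be nonincreasing; and your $-1/k$ shift is a neat device to secure the strict inequality $d_k>0$ stated in the lemma (the paper's own proof only obtains $d_k\geq 0$, which is all that is needed downstream).
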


\begin{proof}
Fix $x_0\in\R^n$. The proof is divided into two steps.

\medskip

\noindent\textit{-- Step 1.} There exist points $y_k$ and functions
$u_k \in \mathpzc{F}$ with $u_{k} \leq u_{k+1}$ such that
\[
y_k\to x_0\quad\hbox{and}\quad u_k(y_k)\to u(x_0).
\]
Indeed, by Remark~\ref{rmk: property 1 sc envelope}, there exists a sequence of points $y_k$
such that
\begin{equation}\label{a}
y_k\to x_0\quad\hbox{and}\quad\overline{w}(y_k)\equiv\sup_{w\in\mathpzc{F}}w(y_k)\to u(x_0).
\end{equation}
Let $k\geq1$. There is a sequence $\{w_{k,j}\}_{j=1}^\infty\in\mathcal{F}$ 
such that
\begin{equation}
\label{c}
w_{k,j}(y_k)\nearrow \overline{w}(y_k)\quad\hbox{as}~j\to\infty.
\end{equation}
In particular, there exists $J(k)>0$ such that
\begin{equation}\label{b}
0 \leq \overline{w}(y_k) - w_{k,j}(y_k) <1/k\quad\hbox{for every}~j\geq J(k).
\end{equation}
Without loss of generality we can let $J(k)<J(k+1)$, for every $k\geq1$.
Define
\[
u_k(y)= \max\big\{ w_{1,J(1)}(y), \ldots, w_{k,J(k)}(y)\big\} \quad\hbox{for}~y\in\R^n.
\]
Then, $u_k \leq u_{k+1}$ and, by Lemma~\ref{lem: max of subsolutions},
$u_k \in \mathcal{F}$ for every $k\geq1$.
Finally, observe that, by the definition of $u_k$, \eqref{c}, \eqref{a}, and \eqref{b}, as $k\to\infty$,
\begin{align*}
|u(x_0)-u_k(y_k)| &\leq |u(x_0)-\overline{w}(y_k)|+\big(\overline{w}(y_k) - u_k(y_k)\big) \\
&\leq |u(x_0)-\overline{w}(y_k)|+(\overline{w}(y_k)-w_{k,J(k)}(y_k)) \to0.
\end{align*}

\medskip

\noindent\textit{-- Step 2.} Let $\Nhd'\subset\subset\Nhd$ be any open neighborhood of $x_0$.
Without loss of generality, we can assume that the sequence $y_k$ from Step 1
satisfies $y_k \in \Nhd'$ for all $k\geq1$. 
Define 
\[
d_k = \inf_{\overline{\Nhd'}} (P - u_k).
\]
Notice that $d_k \geq 0$ is well defined because $P-u_k$ is lower semicontinuous in $\R^n$. 
Moreover, $d_k\geq d_{k+1}$ as $u_k \leq u_{k+1} \leq u \leq P$ in $\Nhd$.
Also,
\[
0\leq d_k\leq P(y_k) - u_k(y_k)\to P(x_0)-u(x_0) = 0.
\]
Let $x_k\in\overline{\Nhd'}$ be such that
\begin{equation}
\label{eq: dk}
P(x_k)-u_k(x_k)=d_k.
\end{equation}
The set of points $\{x_k\}_{k=1}^\infty$ is bounded, so, after passing to a subsequence, we can assume
that $\{x_k\}_{k=1}^\infty$ is convergent in $\overline{\Nhd'}$.

Let us show that $x_k\to x_0$. Suppose, to the contrary, that there exists
a subsequence $\{x_{k_j}\}_{j=1}^\infty$ of $\{x_k\}_{k=1}^\infty$
such that $x_{k_j}\to x'\in\overline{\Nhd'}$, as $j\to\infty$, with $x'\neq x_0$.
Then, as $P>u$ in $\overline{\Nhd'}\setminus\{x_0\}$ and $P-u$ is lower semicontinuous,
\[
0<P(x')-u(x')\leq\liminf_{j\to\infty}(P-u)(x_{k_j})\leq\lim_{j\to\infty}d_{k_j}=0,
\]
which is a contradiction. 
Hence, $x_k\to x_0$, as desired.

This and \eqref{eq: dk} imply that $u_k(x_k)\to u(x_0)$.
By construction, $P(x_k)-d_k=u_k(x_k)$ and $P-d_k\geq u_k$ in $\Nhd'$.
So, $P_k(y)$ as defined in \eqref{eq:Pky} is a $C^2$ function that touches $u_k$ from above at $x_k$ in $\Nhd'$.
\end{proof}

\begin{rem}
In Lemma~\ref{lem: gamma}, we can modify the definition of $P_k(y)$ in \eqref{eq:Pky}.
Indeed, as the proof above shows, any function of the form
\[
P(y)+\varphi(y)-d_k,
\]
where $\varphi$ is a $C^2$ function such that $\varphi(x_k)=0$ and  $\varphi(y)>0$ 
for all $y\in\overline{\Nhd'}\setminus\{ x_k\}$, will touch $u_k$ from above at $x_k$ in $\Nhd'$.
\end{rem}

\begin{lem}
\label{lem: u subsolution}
Let $u$ be as in \eqref{eq: u}.
Then,
\[
\D u\geq u-\phi\quad\hbox{in}~\R^n.
\]
In particular,
\[
u\leq\bar{u},
\]
where $\bar{u}$ is the solution to \eqref{eq:v}.
\end{lem}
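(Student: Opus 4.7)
The plan is to prove $\D u \geq u - \phi$ in the viscosity sense in $\R^n$ for $u$ defined in \eqref{eq: u}. Since $u \leq \psi$ and $\lim_{|x|\to\infty}(u-\phi)(x) = 0$ already hold by construction, this places $u \in \mathpzc{F}$, after which the comparison principle of Theorem \ref{thm:comparison principle} applied to $u$ and $\bar u$ immediately yields $u \leq \bar u$.

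To verify the subsolution property, let $P$ be a $C^2$ function touching $u$ from above at $x_0$ in $\Nhd$. Using Remark \ref{rmk: viscosity nhd}, we shrink to a ball $\Nhd' \subset\subset \Nhd$ centered at $x_0$ (so $\partial \Nhd'$ has Lebesgue measure zero) and let $\vth$ equal $P$ on $\Nhd'$ and $u$ on $\R^n \setminus \Nhd'$. It suffices to show $L_A^s \vth(x_0) \geq u(x_0) - \phi(x_0)$ for every $A \in \mathpzc{M}$. Apply Lemma \ref{lem: gamma} to this $\Nhd'$ to obtain $u_k \in \mathpzc{F}$, $x_k \to x_0$, $d_k \searrow 0$, and $P_k(y) = P(y) + |y-x_k|^2/k - d_k$ with $P_k$ touching $u_k$ from above at $x_k$ in $\Nhd'$ and $u_k(x_k) \to u(x_0)$. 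Because the sup over $\mathpzc{F}$ is unchanged by restricting to $\{w \in \mathpzc{F} : w \geq \phi\}$ (use Lemma \ref{lem: max of subsolutions} together with $\phi \in \mathpzc{F}$), we may assume $u_k \geq \phi$; in particular each $u_k$ is asymptotically linear. Lemma \ref{lem:touchandcompute} then yields, for every $A \in \mathpzc{M}$,
\[
L_A^s \vth_k(x_k) \geq u_k(x_k) - \phi(x_k) \longrightarrow u(x_0) - \phi(x_0),
\]
where $\vth_k$ equals $P_k$ on $\Nhd'$ and $u_k$ elsewhere.

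The main step is a reverse Fatou argument showing $\limsup_k L_A^s \vth_k(x_k) \leq L_A^s \vth(x_0)$. For a.e. $y \in \R^n$, $\limsup_k \vth_k(x_k \pm y) \leq \vth(x_0 \pm y)$: if $x_0 \pm y \in \Nhd'$, this is $P_k \to P$ uniformly on compact sets; if $x_0 \pm y \in \R^n \setminus \overline{\Nhd'}$, it follows from $u_k \leq u$ together with the upper semicontinuity of $u$; the exceptional case $x_0 \pm y \in \partial \Nhd'$ is Lebesgue-null. Combined with $\vth_k(x_k) = u_k(x_k) + d_k \to u(x_0) = \vth(x_0)$, this gives $\limsup_k \delta(\vth_k, x_k, y) \leq \delta(\vth, x_0, y)$ a.e. A uniform integrable majorant for $\delta(\vth_k,x_k,y)/|A^{-1}y|^{n+2s}$ is supplied by $|\delta(\vth_k,x_k,y)| \leq C|y|^2$ for small $|y|$ (from the $k$-uniform $C^2$ bound on $P_k$) and $|\delta(\vth_k,x_k,y)| \leq C(1+|y|)$ for large $|y|$ (from $\phi \leq u_k \leq u \leq C(1+|\cdot|)$), whose division by $|A^{-1}y|^{n+2s}$ yields a $|y|^{1-n-2s}$ tail, integrable at infinity precisely because $s > 1/2$. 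Reverse Fatou closes the chain
\[
u(x_0) - \phi(x_0) \leq \liminf_k L_A^s \vth_k(x_k) \leq \limsup_k L_A^s \vth_k(x_k) \leq L_A^s \vth(x_0),
\]
and taking the infimum over $A$ gives $\D \vth(x_0) \geq u(x_0) - \phi(x_0)$.

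The delicate point we expect is the a.e. pointwise $\limsup$ estimate on $\vth_k(x_k \pm y)$: the edge case where $x_k \pm y$ oscillates between $\Nhd'$ and its complement is handled only because $\Nhd'$ was chosen with measure-zero boundary and because $u$ is genuinely upper semicontinuous. The hypothesis $s > 1/2$ is also essential, not cosmetic, as it alone supplies the integrable tail dominating $\delta(\vth_k,x_k,y)/|A^{-1}y|^{n+2s}$ uniformly in $k$.
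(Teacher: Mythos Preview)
Your proof is correct and follows essentially the same approach as the paper: both invoke Lemma~\ref{lem: gamma} to produce the approximants $u_k$, $x_k$, $P_k$, test against the subsolution property of $u_k$, and then pass to the limit in $L_A^s\vth_k(x_k)$. The only difference is in that last limit: the paper writes the principal-value integral, uses $u_k\leq u$ and the explicit form of $P_k-P$ to obtain the quantitative bound $L_A^s\vth_k(x_k)\leq L_A^s\vth(x_k)+C(k^{-1}+d_k)$, and then invokes the continuity result of Lemma~\ref{lem:continuityofLAs} to send $x_k\to x_0$; you instead run a reverse Fatou argument on $\delta(\vth_k,x_k,y)/|A^{-1}y|^{n+2s}$, using upper semicontinuity of $u$ and a uniform linear-growth majorant (which, as you correctly note, needs $s>1/2$). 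Both routes are sound and equally short; the paper's is slightly more direct in that it avoids the measure-zero boundary and pointwise-$\limsup$ bookkeeping, while yours avoids appealing to the $C^\mu$ regularity of $L_A^s\vth$.
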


\begin{proof}
Let $P$ be a $C^2$ function touching $u$ from above at $x_0$ in $\mathcal{N}$.
By Lemma~\ref{lem: gamma}, there exist functions $u_k \in \mathpzc{F}$, points $x_k\in B_r(x_0)\subset\subset\Nhd$ for some $r > 0$, and constants $d_k>0$ such that $u_k \leq u_{k+1}\leq u$, $d_k\searrow0$, $u_k(x_k)\to u(x_0)$, and $P_k(y)=P(y)+\frac{1}{k}|y-x_k|^2-d_k$ touches $u_k$ from above at $x_k$ in $B_r(x_0)$ for $k\geq1$. 
Define the test functions
\[
\vth(x)=
\begin{cases}
P(x)&\hbox{if}~x\in B_r(x_0)\\
u(x)&\hbox{if}~x\in\R^n\setminus B_r(x_0)
\end{cases}
\quad\text{and}\quad
\vth_k(x)=
\begin{cases}
P_k(x)&\hbox{if}~x\in B_r(x_0)\\
u_k(x)&\hbox{if}~x\in\R^n\setminus B_r(x_0).
\end{cases}
\]
We recall that, by Remark~\ref{rmk: viscosity nhd}, it is enough to use $\vth$ as defined above as a test function for $u$. 
Let $A\in\mathpzc{M}$ and let $\Lambda_A$ denote the maximum eigenvalue of $A$. 
Then,
\begin{align*}
c_{n,s}^{-1}&L_A^s\vth_k(x_k) \\
&= \lim_{\rho\to0}\int_{B_r(x_0)\setminus B_\rho(x_k)}
\frac{\vth_k(y)-\vth_k(x_k)}{|A^{-1}(y-x_k)|^{n+2s}}\,dy
+\int_{\R^n\setminus B_r(x_0)}\frac{\vth_k(y)-\vth_k(x_k)}{|A^{-1}(y-x_k)|^{n+2s}}\,dy \\
&\leq \lim_{\rho\to0}\bigg[\int_{B_r(x_0)\setminus B_\rho(x_k)}
\frac{P(y)-P(x_k)}{|A^{-1}(y-x_k)|^{n+2s}}\,dy
+\frac{1}{k} \int_{B_r(x_0)\setminus B_\rho(x_k)}\frac{|y-x_k|^2}{|A^{-1}(y-x_k)|^{n+2s}}\,dy\bigg] \\
&\quad+\int_{\R^n\setminus B_r(x_0)}\frac{u(y)-P(x_k)}{|A^{-1}(y-x_k)|^{n+2s}}\, dy
+\int_{\R^n \setminus B_r(x_0)} \frac{d_k}{|A^{-1}(y-x_k)|^{n+2s}}\, dy\\
&\leq c_{n,s}^{-1}L_A^s\vth(x_k)+\Lambda_A^{n+2s}
\bigg[\frac{1}{k}\int_{B_r(x_0)}\frac{1}{|y-x_k|^{n+2s-2}}\,dy
+\int_{\R^n \setminus B_r(x_0)}\frac{d_k}{|y-x_k|^{n+2s}}\,dy\bigg]   \\
&\leq c_{n,s}^{-1}L_A^s\vth(x_k)+C\big(k^{-1}+d_k\big),
\end{align*}
where $C=C(n,s,\Lambda_A,r)>0$ is independent of $k$. 
Here, we have used that
\[
\int_{\R^n\setminus B_r(x_0)}\frac{1}{|y-x_k|^{n+2s}}\,dy\to
\int_{\R^n\setminus B_r(x_0)}\frac{1}{|y-x_0|^{n+2s}}\,dy
\]
and
\[
\int_{B_r(x_0)}\frac{1}{|y-x_k|^{n+2s-2}}\,dy\to
\int_{B_r(x_0)}\frac{1}{|y-x_0|^{n+2s-2}}\,dy,
\]
as $k\to\infty$.
Hence, as $u_k$ is a subsolution,
\[
\vth_k(x_k) - \phi(x_k) \leq \D \vth_k(x_k) \leq L_A^s\vth(x_k) +C\big(k^{-1}+d_k\big).
\]
Notice that $\vth_k(x_k)=P_k(x_k)\to P(x_0)=\vth(x_0)$ as $k\to\infty$.
Together with Lemma \ref{lem:continuityofLAs}, this implies that
\[
\vth(x_0) - \phi(x_0) \leq L_A^s \vth(x_0).
\]
Since $A\in\mathpzc{M}$ was arbitrary, we obtain $\vth(x_0)-\phi(x_0)\leq\D\vth(x_0)$, which means that $u$ is a subsolution to $\D u\geq u-\phi$.

We have already seen that $\lim_{|x|\to\infty}(u-\phi)(x)=0$.
Thus, the comparison principle (Theorem~\ref{thm:comparison principle}) implies that $u\leq\bar{u}$.
\end{proof} 

With Lemma~\ref{lem: u subsolution} in hand, we can prove that the contact set $\{ u = \psi \}$
is compact and that $u$ is Lipschitz and semiconcave with constants no larger than those of $\phi$ and $\psi$.

\begin{lem}\label{lem:compact}
Let $u$ be as in \eqref{eq: u}. 
Then, 
\[
\{ u = \psi \}~\hbox{is compact}.
\]
\end{lem}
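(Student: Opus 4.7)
The plan is to combine Lemma \ref{lem: u subsolution}, which gives the one-sided barrier $u \leq \bar{u}$, with the standing hypothesis that $\mathcal{K}$ is a compact set on which $\psi \leq \bar{u}$ (and off of which $\psi > \bar{u}$). The contact set $\{u=\psi\}$ is the preimage of a closed condition, so topologically I just need to check both closedness and boundedness; the nontrivial input is the boundedness, which will follow once I trap $u$ strictly below $\psi$ outside $\mathcal{K}$.

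First I would verify that $\{u=\psi\}$ is closed. Since $u\in\USC(\R^n)$ and $\psi\in C^{2,1}(\R^n)\subset C(\R^n)$, the difference $u-\psi$ is upper semicontinuous. The inequality $u\leq\psi$ in $\R^n$, built into the definition of the class $\mathpzc{F}$ and inherited by $u$, lets me rewrite
\[
\{u=\psi\} = \{u-\psi\geq0\},
\]
and the upper level set of an upper semicontinuous function at $0$ is closed.

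Next I would establish the containment $\{u=\psi\}\subset\mathcal{K}$, from which compactness is immediate as a closed subset of a compact set. By Lemma~\ref{lem: u subsolution}, $u\leq \bar{u}$ on $\R^n$. By the standing assumption on the obstacle, $\psi>\bar{u}$ on $\R^n\setminus\mathcal{K}$. Chaining these two inequalities gives
\[
u(x)\leq \bar{u}(x) < \psi(x)\quad\text{for every}~x\in\R^n\setminus\mathcal{K},
\]
so the strict inequality $u<\psi$ holds off of $\mathcal{K}$. Consequently, $\{u=\psi\}\subset\mathcal{K}$, and since the left-hand side is closed and the right-hand side is compact, $\{u=\psi\}$ is compact.

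There is no serious obstacle here: the entire argument rests on two ingredients already in place, namely the comparison $u\leq\bar{u}$ supplied by Lemma~\ref{lem: u subsolution} and the assumption on $\psi$ relative to $\bar{u}$ outside of $\mathcal{K}$. The only subtlety to flag is that $u$ is only known to be upper semicontinuous at this stage (continuity of $u$ is part of what the full proof of Theorem~\ref{thm: solution of obstacle problem} eventually delivers), but this is precisely the right regularity for the level-set argument above to go through.
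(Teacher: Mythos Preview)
Your proof is correct and follows essentially the same approach as the paper: both arguments show closedness via the upper semicontinuity of $u-\psi$ (the paper phrases this as $\{u<\psi\}$ being open) and then use the comparison $u\leq\bar{u}$ from Lemma~\ref{lem: u subsolution} together with the standing assumption on $\mathcal{K}$ to trap the contact set inside $\mathcal{K}$. The paper writes the second step as the set containment $\{\bar{u}<\psi\}\subset\{u<\psi\}$ rather than the pointwise chain $u\leq\bar{u}<\psi$ off $\mathcal{K}$, but these are equivalent formulations of the same idea.
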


\begin{proof}
We know that $u\leq\psi$ and that the noncoincidence set $\{u<\psi\}$
is open, see \eqref{eq:non coincidence set}. 
Therefore, the contact set $\{u=\psi\}$ is closed.
On the other hand, by Lemma \ref{lem: u subsolution}, $\{\bar{u}<\psi\}\subset\{u<\psi\}$,
which implies that $\{u=\psi\}\subset \mathcal{K}$.
Hence, the contact set is compact.
\end{proof}

Recall the definition of $M_1$ and $M_2$ from the statement of Theorem \ref{thm: solution of obstacle problem}.

\begin{lem}
\label{lem: u Lip and SC}
Let $u$ be as in \eqref{eq: u}. 
Then, $u$ is Lipschitz continuous and semiconcave with
\[
[u]_{\Lip(\R^n)} \leq M_1 \quad\hbox{and}\quad\SC(u)\leq M_2.
\]
\end{lem}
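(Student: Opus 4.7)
The approach is to exploit the maximality of $u$ in the class $\mathpzc{F}$: since $u \in \mathpzc{F}$ (the subsolution property is Lemma~\ref{lem: u subsolution}; the obstacle inequality and upper semicontinuity hold by construction; and the decay at infinity follows from $\phi \leq u \leq \bar u$), any competitor in $\mathpzc{F}$ must lie below $u$. The two estimates will then come from producing, respectively, a translated and a symmetrized competitor built out of $u$ itself.

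For the Lipschitz bound, I fix $h \in \R^n$ and check that $\tilde u(x) := u(x+h) - M_1|h|$ lies in $\mathpzc{F}$. Upper semicontinuity is clear; the translation invariance of $\D$ (property (1) listed below the definition of $\D$) together with $\phi(x+h) - \phi(x) \leq M_1|h|$ gives
\[
\D \tilde u(x) = (\D u)(x+h) \geq u(x+h) - \phi(x+h) \geq \tilde u(x) - \phi(x);
\]
the obstacle inequality $\tilde u \leq \psi$ uses $[\psi]_{\Lip(\R^n)} \leq M_1$; and the decay uses $u-\phi \to 0$ together with the Lipschitz control on $\phi$. Maximality gives $u(x+h) - u(x) \leq M_1|h|$, and the symmetric choice $h \mapsto -h$ completes the estimate.

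For the semiconcavity bound, I fix $y \in \R^n$ and consider the symmetrized competitor
\[
w(x) := \frac{u(x+y) + u(x-y) - M_2|y|^2}{2}.
\]
Once $w \in \mathpzc{F}$ is established, the bound $w \leq u$ reads exactly as $\delta(u,x,y) \leq M_2|y|^2$. The conditions $w \in \USC(\R^n)$, $w \leq \psi$, and $\limsup_{|x|\to\infty}(w-\phi) \leq 0$ follow straightforwardly from the inequalities $u \leq \psi$, $\SC(\psi) \leq M_2$, $\SC(\phi) \leq M_2$, and $u - \phi \to 0$ at infinity.

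The main obstacle is the viscosity subsolution property $\D w \geq w - \phi$. Formally, since $\D = \inf_{A \in \mathpzc{M}} L_A^s$ is an infimum of linear operators, it is concave, and
\[
\D w \geq \tfrac{1}{2}\big[\D u(\cdot+y) + \D u(\cdot-y)\big] \geq \tfrac{1}{2}\big[(u-\phi)(\cdot+y) + (u-\phi)(\cdot-y)\big] \geq w - \phi,
\]
where the last inequality absorbs $\tfrac{1}{2}[\phi(\cdot+y)+\phi(\cdot-y)-2\phi]$ into $\tfrac{M_2|y|^2}{2}$ via $\SC(\phi) \leq M_2$. At the viscosity level, however, a $C^2$ function $P$ touching $w$ from above at $x_0$ does not in general split into $C^2$ test functions for $u(\cdot \pm y)$ individually, so the concavity step needs justification. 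I plan to rigorize it either by (i) sup-convolving $u$ to obtain semiconvex approximations $u^\epsilon$ on which $\D$ can be evaluated classically almost everywhere, using the concavity pointwise a.e., and then passing to the limit with Lemma~\ref{lem:gammaconvergence}; or (ii) adapting the approximation device of Lemma~\ref{lem: gamma} together with the kernel splitting estimate in the proof of Lemma~\ref{lem: u subsolution}, applied to the symmetrized sequence $\tfrac{1}{2}[u_k(\cdot + y) + u_k(\cdot - y) - M_2|y|^2]$ and perturbed test functions of the form $P(\cdot) + |\cdot - x_k|^2/k - d_k$.
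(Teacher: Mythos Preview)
Your proposal is correct and follows exactly the paper's approach: the same translated competitor $u(\cdot+h)-M_1|h|$ for the Lipschitz bound and the same symmetrized competitor $\tfrac12[u(\cdot+y)+u(\cdot-y)-M_2|y|^2]$ for the semiconcavity bound, each shown to lie in $\mathpzc{F}$ and then compared to $u$ by maximality. The one place you add extra work is the viscosity-level justification of the concavity step; the paper simply invokes the listed operator property $\D\big(\tfrac{w_1+w_2}{2}\big)\geq\tfrac12\D w_1+\tfrac12\D w_2$ and writes ``in the viscosity sense'' without further comment, so your plans (i)--(ii) are more careful than what the paper actually does but not a different route.
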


\begin{proof}
Given any $h\in\R^n$, let us first show that
\begin{equation}
\label{eq:winF}
w(x)=u(x+h)-M_1|h|\in\mathpzc{F}.
\end{equation}
Indeed, $w\in\USC(\R^n)$,
\[
\lim_{|x|\to\infty}(w-\phi)(x)=\lim_{|x|\to\infty}\Big[\big(u(x+h)-u(x)\big)
+\big(u(x)-\phi(x)\big)\Big]-M_1|h|\leq0,
\]
and since $-M_1|h|\leq\psi(x)-\psi(x+h)$ and $u\leq\psi$,
\[
w(x)=u(x+h)-M_1|h|\leq u(x+h)-\psi(x+h)+\psi(x)\leq\psi(x).
\]
Finally, as $\D$ is translation invariant, $\D c=0$ for any constant $c$,
 and $\phi(x+h)-\phi(x)\leq M_1|h|$, we find that
\begin{align*}
\D w(x) &= \D(\tau_hu)(x)=(\D u)(x+h)\geq u(x+h)-\phi(x+h) \\
&= (u(x+h)-M_1|h|)-\phi(x+h)+M_1|h| \\
&\geq w(x)-\phi(x),
\end{align*}
in the viscosity sense. 
Thus, \eqref{eq:winF} is proved.
Now, by the maximality of $u$ in $\mathpzc{F}$, $w\leq u$, which means that
\[
u(x+h)-u(x)\leq M_1|h|.
\]
Since $x$ and $h$ above are arbitrary, we conclude that $[u]_{\Lip(\R^n)} \leq M_1$.

Given any $h\in\R^n$, let us first see that
\begin{equation}
\label{eq:omegainF}
w(x) = \frac{u(x+h)+u(x-h)-M_2|h|^2}{2} \in \mathpzc{F}.
\end{equation}
Indeed, $w\in\USC$, and since $\delta(\phi,x,h)\leq M_2|h|^2$,
\begin{align*}
(w-\phi)(x) &= \frac{u(x+h)+u(x-h)}{2}-\frac{M_2|h|^2}{2}-\phi(x)\\
& = \frac{(u-\phi)(x+h)+(u-\phi)(x-h)}{2}+\frac{\delta(\phi,x,h)-M_2|h|^2}{2} \\
&\leq \frac{(u-\phi)(x+h)+(u-\phi)(x-h)}{2}\to0
\end{align*}
as $|x|\to\infty$. 
Also, $u\leq\psi$ and $\delta(\psi,x,h)\leq M_2|h|^2$, which implies that
\[
w(x)=\frac{u(x+h)+u(x-h)}{2}-\frac{M_2|h|^2}{2}
\leq \frac{\psi(x+h)+\psi(x-h)}{2}-\frac{M_2|h|^2}{2} \leq \psi(x).
\]
Finally, using the inequality $M_2|h|^2-\delta(\phi,x,h)\geq0$,
\begin{align*}
\D w(x) &\geq \frac{1}{2}\D(\tau_hu)(x)+\frac{1}{2}\D(\tau_{-h}u)(x)\\
&= \frac{1}{2}\D u(x+h) + \frac{1}{2}\D u(x-h)\\
&\geq \frac{(u-\phi)(x+h)+(u-\phi)(x-h)}{2}\\
&= w(x)+\frac{M_2|h|^2-\delta(\phi,x,h)}{2}-\phi(x)\geq w(x)-\phi(x),
\end{align*}
in the viscosity sense. 
Thus, \eqref{eq:omegainF} is proved. 
By the maximality of $u$ in $\mathpzc{F}$, we have that $w\leq u$. 
Hence,
\[
u(x+h)+u(x-h)-2u(x)\leq M_2|h|^2
\]
or, equivalently, $u$ is semiconcave and $\SC(u) \leq M_2$.
\end{proof}

The semiconcavity of $u$ permits us to compute $\D u(x)$ in the classical sense,
see Lemma \ref{lem:touchandcompute}. We use this to show that $\D u(x)$ is bounded from above.

\begin{lem}
\label{lem:classical soln}
Let $u$ be as in \eqref{eq: u}. 
Then, $\D u(x)$ can be computed in the classical sense and
\[
0\leq \D u(x)\leq C\big(1 + \|u-\phi\|_{L^\infty(\R^n)}\big)\quad\hbox{for every}~x\in\R^n,
\]
for some constant $C=C(n,s,a,\epsilon,M_2)>0$.
\end{lem}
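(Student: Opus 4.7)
The plan has three steps: evaluate $\D u(x)$ classically at every point, deduce the lower bound from $u \geq \phi$, and prove the upper bound by testing against the identity matrix and estimating the resulting singular integral.

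First, since $u$ is semiconcave with $\SC(u) \leq M_2$ by Lemma \ref{lem: u Lip and SC}, at each $x_0 \in \R^n$ there is a quadratic polynomial with leading coefficient $\tfrac{M_2}{2}$ that touches $u$ from above at $x_0$ (take a supergradient of the concave function $u - \tfrac{M_2}{2}|\cdot|^2$). Together with $\D u \geq u - \phi$ in the viscosity sense (Lemma \ref{lem: u subsolution}) and the asymptotic linearity of $u$ at infinity (a consequence of $u - \phi \to 0$ and the asymptotic linearity of $\phi$), Lemma \ref{lem:touchandcompute} then yields the classical inequality $\D u(x_0) \geq u(x_0) - \phi(x_0)$ and the absolute integrability of $\delta(u, x_0, y)/|A^{-1}y|^{n+2s}$ for every $A \in \mathpzc{M}$. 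Since $\phi \in \mathpzc{F}$, the supremum definition \eqref{eq: u} gives $u \geq \phi$ on $\R^n$, and hence $\D u(x) \geq 0$.

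For the upper bound, the identity matrix $I \in \mathpzc{M}$ provides
\[
\D u(x) \leq L_I^s u(x) = \frac{c_{n,s}}{2} \int_{\R^n} \frac{\delta(u, x, y)}{|y|^{n+2s}}\, dy,
\]
which I would split into $B_1$ and $\R^n \setminus B_1$. On $B_1$, semiconcavity gives $\delta(u, x, y) \leq M_2|y|^2$, contributing at most $C(n, s) M_2$ after integration (here $2 - 2s > 0$ is used). On $\R^n \setminus B_1$, I would decompose $u = \phi + (u - \phi)$. The pointwise bound $|\delta(u - \phi, x, y)| \leq 4\|u - \phi\|_{L^\infty}$ produces the $\|u - \phi\|_{L^\infty}$ contribution via $\int_{|y|>1}|y|^{-n-2s}\, dy < \infty$, while $\delta(\phi, x, y)$ is controlled using the asymptotic representation $\phi = \Gamma + \eta$ near infinity: convexity and Lipschitz character of the cone $\Gamma$ yield $\delta(\Gamma, x, y)^+ \leq C|y|$, and the prescribed decay bounds on $\eta$ and its derivatives yield $|\delta(\eta, x, y)|$ bounded (and in fact decaying in $|y|$). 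The hypothesis $s > 1/2$ is precisely what makes the tail integral $\int_{|y|>1}|y|^{1-n-2s}\,dy$ finite.

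The main obstacle is verifying that the bound on $\delta(\phi, x, y)^+$ for $|y| > 1$ is uniform in $x \in \R^n$ with constants depending only on $n, s, a, \epsilon$, and $M_2$, bridging the compact region where $\phi$ is merely a $C^{2,\sigma}$ strictly convex function (and only the global semiconcavity $\SC(\phi) \leq M_2$ is available) with the far region where $\phi = \Gamma + \eta$ holds. Once this uniform control is in place, assembling the pieces gives the stated bound $0 \leq \D u(x) \leq C(1 + \|u - \phi\|_{L^\infty(\R^n)})$ with $C = C(n, s, a, \epsilon, M_2)$.
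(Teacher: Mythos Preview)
Your proof is correct and follows the paper's argument essentially line for line: semiconcavity plus Lemma~\ref{lem:touchandcompute} for the classical evaluation and lower bound, then the identity matrix, the $B_1$/$\R^n\setminus B_1$ split, and the decomposition $u=(u-\phi)+\phi$ for the upper bound. The only difference is in handling the $\phi$ contribution on $\R^n\setminus B_1$: what you flag as the ``main obstacle'' is exactly the uniform bound $0\le -(-\Delta)^s\phi\le M_0$, which the paper simply quotes as \eqref{M0} from \cite[eq.~(6.8)]{Caffarelli-Charro} rather than rederiving; since $\delta(\phi,x,y)\ge0$ by convexity, the tail integral is dominated by the full integral $-(-\Delta)^s\phi(x)\le M_0$, and your bridging argument between the compact and far regions is unnecessary.
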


\begin{proof}
As $u$ is semiconcave on $\R^n$ (see Lemma \ref{lem: u Lip and SC}), it can be touched from above by a $C^2$
function at every point $x \in \R^n$.
Thus, Lemmas \ref{lem: u subsolution} and \ref{lem:touchandcompute} imply that $\D u(x)$ can be computed classically and 
$\D u(x)\geq u(x)-\phi(x)\geq0$ for every $x\in\R^n$.
Since, for any $x\in\R^n$, we have $\delta(u,x,y)/|y|^{n+2s}\in L^1(\R^n)$
and $\delta(\phi,x,y)\geq0$, we can estimate
\begin{align*}
\D u(x) &\leq -(-\Delta)^s u(x) \\
&= c_{n,s}\int_{B_1} \frac{\delta(u,x,y)}{|y|^{n+2s}}\,dy
+c_{n,s}\int_{\R^n \setminus B_1} \frac{\delta(u,x,y)}{|y|^{n+2s}}\,dy\\
&\leq c_{n,s}\int_{B_1} \frac{\SC(u)|y|^2}{|y|^{n+2s}} \,dy
+c_{n,s}\int_{\R^n\setminus B_1}\frac{\delta(u-\phi,x,y)}{|y|^{n+2s}}\,dy
+c_{n,s}\int_{\R^n \setminus B_1} \frac{\delta(\phi,x,y)}{|y|^{n+2s}}\,dy \\
&\leq C\big(M_2 +\|u-\phi\|_{L^\infty(\R^n)} -(-\Delta)^s\phi(x)\big) \\
&\leq C\big(1+\|u-\phi\|_{L^\infty(\R^n)}\big).
\end{align*}
In the last inequality, we have used that
\begin{equation}\label{M0}
0\leq -(-\Delta)^s\phi\leq M_0\quad\hbox{in}~\R^n,
\end{equation}
for some constant $M_0=M_0(n,s,a,\epsilon)>0$, see \cite[eq.~(6.8)]{Caffarelli-Charro}.
\end{proof}

Next we need to consider
the obstacle problem \eqref{eq:obstacle problem} for the truncated fractional Monge--Amp\`ere operator
defined in subsection~\ref{subsection:truncated operator}.

\begin{thm}
\label{thm: eps-obstacle problem}
For any $\varepsilon>0$, there exists a unique classical solution $u_\varepsilon$ to the obstacle problem
\[
\begin{cases}
\D^\vep u_\vep\geq u_\vep-\phi&\hbox{in}~\R^n\\
u_\vep\leq\psi&\hbox{in}~\R^n\\
\D^\vep u_\vep=u_\vep-\phi&\hbox{in}~\{u_\vep<\psi\}\\
\displaystyle\lim_{|x|\to\infty}(u_\vep-\phi)(x)=0.
\end{cases}
\]
Moreover, $u_\vep$ is Lipschitz and semiconcave with constants no larger than $M_1$ and $M_2$, respectively.
\end{thm}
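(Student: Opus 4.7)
The plan is to carry out the Perron construction from Lemmas \ref{lem: max of subsolutions}--\ref{lem: u Lip and SC} with $\D$ replaced by the uniformly elliptic operator $\D^\vep$. Define
\[
\mathpzc{F}_\vep = \big\{w\in\USC(\R^n) : \D^\vep w \geq w-\phi~\text{in}~\R^n,~w\leq\psi,~\lim_{|x|\to\infty}(w-\phi)(x)\leq 0\big\}
\]
and set $u_\vep(x) = (\sup\{w(x) : w \in \mathpzc{F}_\vep\})^*$. Since $\mathpzc{M}_\vep \subset \mathpzc{M}$ forces $\D^\vep \geq \D$ on admissible test functions, we have $\phi \in \mathpzc{F}_\vep$ by the same convexity argument as before, so the class is nonempty.

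First I would show that $u_\vep \in \mathpzc{F}_\vep$, repeating the proofs of Lemmas \ref{lem: max of subsolutions}, \ref{lem: gamma}, and \ref{lem: u subsolution} essentially verbatim. Those arguments used only translation invariance of $\D$, its infimum-of-linear-operators structure, and the continuity estimate of Lemma \ref{lem:continuityofLAs}, all of which apply equally to $\D^\vep$. Next, the global Lipschitz and semiconcavity bounds $[u_\vep]_{\Lip(\R^n)} \leq M_1$ and $\SC(u_\vep) \leq M_2$ follow exactly as in Lemma \ref{lem: u Lip and SC}: for any $h \in \R^n$, one verifies that $u_\vep(\cdot+h) - M_1|h|$ and $\tfrac{1}{2}(u_\vep(\cdot+h) + u_\vep(\cdot-h)) - \tfrac{1}{2}M_2|h|^2$ both lie in $\mathpzc{F}_\vep$ (using translation invariance and concavity of $\D^\vep$, together with the Lipschitz and semiconcavity bounds on $\phi$ and $\psi$), and then maximality of $u_\vep$ transfers the bounds to $u_\vep$ itself.

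Next I would establish the free boundary condition $\D^\vep u_\vep = u_\vep - \phi$ in $\{u_\vep < \psi\}$. Because $\D^\vep$ is uniformly elliptic in the Caffarelli--Silvestre sense (see \eqref{eq:minmaxeigenvalues}), the standard Perron-type argument for obstacle problems applies: if there were some $x_0 \in \{u_\vep < \psi\}$ at which $\D^\vep u_\vep(x_0) > u_\vep(x_0) - \phi(x_0)$ in the viscosity sense, then, on a small ball $B_r(x_0) \subset\subset \{u_\vep < \psi\}$ with $u_\vep < \psi$, one solves the Dirichlet problem $\D^\vep v = v - \phi$ in $B_r(x_0)$ with $v = u_\vep$ outside (using existence theory for uniformly elliptic nonlocal equations). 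By construction $v \geq u_\vep$ with strict inequality at $x_0$, and $\max\{u_\vep,v\}$ would still lie in $\mathpzc{F}_\vep$ by the cut-and-paste of Lemma \ref{lem: max of subsolutions}, contradicting maximality. Continuity of $u_\vep$ follows from its Lipschitz bound, and its semiconcavity lets us evaluate $\D^\vep u_\vep$ classically everywhere via the analogue of Lemma \ref{lem:touchandcompute} for $\D^\vep$, so $u_\vep$ is a classical solution.

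Uniqueness follows from a comparison principle for $\D^\vep$. Since $\D^\vep$ is uniformly elliptic and two solutions are both trapped between $\phi$ and $\psi$ with $u_\vep - \phi \to 0$ at infinity, a routine adaptation of Theorem \ref{thm:comparison principle} (applied to the difference of two solutions, with care at contact points using the obstacle inequalities) forces them to coincide. The main technical hurdle in the program is the local Dirichlet solvability used to construct the lift $v$, but this is standard for uniformly elliptic nonlocal operators with the continuity and concavity properties of $\D^\vep$, so I would cite the relevant existence and regularity theory rather than reprove it.
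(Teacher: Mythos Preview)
Your Perron construction for the subsolution property, the Lipschitz bound, and the semiconcavity bound tracks the paper exactly: these follow by rerunning Lemmas \ref{lem: max of subsolutions}--\ref{lem: u Lip and SC} with $\D^\vep$ in place of $\D$, just as you say.

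The divergence, and a genuine gap, is in the supersolution step on $\{u_\vep<\psi\}$. You propose to solve the Dirichlet problem $\D^\vep v=v-\phi$ in a small ball $B_r(x_0)$ with exterior data $u_\vep$ and then argue that $\max\{u_\vep,v\}\in\mathpzc{F}_\vep$. But you never verify $v\leq\psi$ on $B_r(x_0)$, and this is not automatic: the right-hand side $v-\phi$ is of order one, so the Dirichlet solution can overshoot $u_\vep$ by an amount that does not obviously vanish with $r$, and $\psi$ is not assumed to be a supersolution that could serve as a barrier. Closing this would require an explicit barrier or a quantitative estimate on $\|v-u_\vep\|_{L^\infty(B_r)}$, neither of which you provide. (You flag ``local Dirichlet solvability'' as the main technical hurdle, but that part is indeed standard; the obstacle constraint is the actual issue.)

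The paper avoids this with a more elementary bump: given a $C^2$ function $P$ touching $u_\vep$ strictly from below at $x_0$ with $\D^\vep\vth(x_0)>\vth(x_0)-\phi(x_0)$, it lifts $P$ by a small constant $d>0$ and sets $u_\vep'=\max\{u_\vep,P+d\}$. Since $P$ touches strictly from below, the set $\{u_\vep<P+d\}$ shrinks to $\{x_0\}$ as $d\searrow0$, so for small $d$ it sits inside $B_r(x_0)\subset\subset\{u_\vep<\psi\}$ and $P+d<\psi$ there \emph{automatically}. The remaining work is to check that $u_\vep'$ is still a subsolution, which is done by direct integral estimates; this is precisely where $\vep>0$ enters, since the error term is controlled by a constant $C=C(n,s,\vep,\Nhd)$ that blows up as $\vep\searrow0$. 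This also explains why the same lift cannot be run for $\D$ itself and why the $\vep$-approximation is needed in the overall strategy.
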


\begin{proof}
Fix $\vep>0$. Parallel to \eqref{eq:F}, we define the class
\[
\mathpzc{F}_\vep = \Big\{w\in \USC(\R^n):\De w\geq w-\phi~\hbox{in}~\R^n,
\,w\leq\psi,\,\hbox{and}\,\lim_{|x|\to\infty}(w-\phi)(x)\leq0\Big\}.
\]
Then, $\phi\in\mathpzc{F}_\varepsilon$. By replacing $u$ by $u_\vep$ and $\D$ by $\De$
in the arguments of Lemmas~\ref{lem: max of subsolutions}--\ref{lem: u subsolution}
and Lemma~\ref{lem: u Lip and SC}, we deduce that
\[
u_\vep(x)=\big(\sup\big\{w(x):w\in\mathpzc{F}_\vep\big\}\big)^\ast \quad\hbox{for}~x\in\R^n
\]
is the largest function in $\mathpzc{F}_\vep$, $\phi\leq u_\varepsilon\leq\psi$, $\lim_{|x|\to\infty}(u_\varepsilon-\phi)(x)=0$,
\begin{equation}\label{eq: lip u_eps}
[u_\vep]_{\Lip(\R^n)}\leq M_1\quad\hbox{and}\quad\SC(u_\varepsilon)\leq M_2,
\end{equation}
and the noncoincidence set $\{u_\varepsilon<\psi\}$ is open.

It remains to prove that
\[
\De u_\vep = u_\vep - \phi \quad\hbox{in}~\{u_\vep < \psi\}.
\]
We argue by contradiction. 
Specifically, we will show that if $\De u_\vep = u_\vep - \phi$ fails in the open set $\{u_\vep < \psi\}$, then $u_\vep$ is not maximal in $\mathpzc{F}_\vep$.
To this end, let $x_0\in \{u_\vep < \psi \}$ and $P$ be a $C^2$ function
touching $u_\vep$ from below at $x_0$ in $\overline{\Nhd}$ such that for
\[
\vth(x)=
\begin{cases}
P(x)&\hbox{for}~x\in \Nhd\\
u_\vep(x)&\hbox{for}~x\in\R^n\setminus \Nhd,
\end{cases}
\]
we have
\[
\De \vth(x_0)>\vth(x_0)-\phi(x_0).
\]
Recall that $\De \vth$ is continuous in $\Nhd$ (see Lemma~\ref{lem:continuityofLAs}).
Hence, given 
\[
0<\tau<\De \vth(x_0)-(\vth(x_0)-\phi(x_0)),
\]
there exists a ball $B_r(x_0)\subset\subset\Nhd\cap\{u_\vep<\psi\}$ such that
\begin{equation}
\label{Delta}
\De \vth(z) \geq \vth(z) - \phi(z) + \tau\quad\hbox{for every}~z\in B_r(x_0).
\end{equation}
Next, we lift $P$ in $\mathcal{N}$ by a small amount $d>0$
(to be fixed) so that $\{u_\vep<P+d\} \subset \subset B_r(x_0)$ and $\{ P+d<\psi \} \subset \{ u_\vep < P + d\}$.
We then set
\[
u'_\vep(x)=
\begin{cases}
P(x)+d&\hbox{if}~x\in\{u_\vep<P+d\}\\
u_\vep(x)&\hbox{otherwise},
\end{cases}
\]
and notice that $u_\vep'$ is continuous, $u_\vep'\geq u_\vep$ in $\R^n$, and $\lim_{|x|\to \infty}(u'_\vep-\phi)(x) = 0$.
If we can show that $u_\vep'$ is a subsolution to $\De w = w - \phi$, then $u_\vep$ is not maximal in $\mathpzc{F}_\vep$
because we constructed $u_\vep'$ in such a way that
\[
u_\vep'(x_0)=P(x_0)+d>P(x_0)=u_\vep(x_0).
\]

Therefore, we now prove that
\begin{equation}\label{one claim}
u'_\vep~\hbox{is a subsolution to}~\De w = w - \phi~\hbox{in}~\R^n.
\end{equation}
Let $P'$ be a $C^2$ function touching $u'_\vep$ from above at $x'$ in $\Nhd'$.
We have two cases to consider.

\smallskip

\noindent\textit{-- Case 1. $x' \in \{ u'_\vep = u_\vep \}$}.
Since $\delta(u'_\vep,x',y) \geq \delta(u_\vep,x',y)$, by Lemma \ref{lem:touchandcompute}
(which is also valid for the uniformly elliptic case), we see that 
\[
\De u'_\vep(x') \geq \De u_\vep(x') \geq u_\vep(x') - \phi(x') = u'_\vep(x') - \phi(x'),
\]
and \eqref{one claim} follows. 

\smallskip

\noindent\textit{-- Case 2. $x' \in \{ u'_\vep > u_\vep \}$}. Define
\[
\vth'(x)=\begin{cases}
P'(x)&\hbox{if}~x\in \Nhd'\\
u'_\vep(x)&\hbox{if}~x\in\R^n\setminus \Nhd'.
\end{cases}
\]
Remark~\ref{rmk: viscosity nhd} allows us to assume that $\Nhd'\subset\{ u'_\vep > u_\vep \}=\{u_\vep<P+d\}\subset\subset B_r(x_0)$.
Observe that $P' - d \geq u'_\vep -d = P$ in $\Nhd'$ and $P'(x')-d=P(x')$. Then,
\begin{equation}
\label{Alpha}
\begin{aligned}
c_{n,s}^{-1}L_A^s\vth'(x')
&= \lim_{\rho\to 0} \int_{\Nhd' \setminus B_\rho(x')}\frac{P'(y)-u'_\vep(x')}{|A^{-1}(y-x')|^{n+2s}}\,dy
+\int_{\R^n \setminus \Nhd'} \frac{u'_\vep(y)-u'_\vep(x')}{|A^{-1}(y-x')|^{n+2s}}\, dy\\
&=\lim_{\rho\to 0}\int_{\Nhd' \setminus B_\rho(x')}\frac{(P'(y)-d)-(P'(x')-d)}{|A^{-1}(y-x')|^{n+2s}}\,dy
+\mathrm{I}\\
&\geq \lim_{\rho\to 0} \int_{\Nhd' \setminus B_\rho(x')} \frac{P(y) - P(x')}{|A^{-1}(y-x')|^{n+2s}}\, dy +  \mathrm{I}.
\end{aligned}
\end{equation}
We estimate the integral $\mathrm{I}$ from below.
Since $u_\vep'\geq u_\vep$ in $\R^n$, $u'_\vep(x')=P'(x')=P(x')+d$, and $u'_\vep \geq P + d$ in $\Nhd \setminus \Nhd'$, we find that
\begin{equation}
\label{Beta}
\begin{aligned}
\mathrm{I} &= \int_{\R^n \setminus \Nhd} \frac{u'_\vep(y) - u'_\vep(x')}{|A^{-1}(y-x')|^{n+2s}}\, dy
+\int_{\Nhd \setminus \Nhd'} \frac{u'_\vep(y) - u'_\vep(x')}{|A^{-1}(y-x')|^{n+2s}}\, dy\\
&\geq \int_{\R^n \setminus \Nhd} \frac{u_\vep(y) - P(x')}{|A^{-1}(y-x')|^{n+2s}}\, dy
-\int_{\R^n \setminus \Nhd} \frac{d}{|A^{-1}(y-x')|^{n+2s}}\, dy\\
&\quad+ \int_{\Nhd \setminus \Nhd'} \frac{P(y) - P(x')}{|A^{-1}(y-x')|^{n+2s}}\, dy.
\end{aligned}
\end{equation}
From \eqref{Alpha}, \eqref{Beta} and the definition of $\vth$, we get
\begin{align*}
c_{n,s}^{-1}L_A^s\vth'(x') &\geq \lim_{\rho\to 0} \int_{\Nhd\setminus B_\rho(x')}\frac{P(y) - P(x')}{|A^{-1}(y-x')|^{n+2s}}\, dy 
+\int_{\R^n \setminus \Nhd} \frac{u_\vep(y) - P(x')}{|A^{-1}(y-x')|^{n+2s}}\, dy \\
&\quad-d\int_{\R^n \setminus \Nhd} \frac{1}{|A^{-1}(y-x')|^{n+2s}}\, dy \\
&= c_{n,s}^{-1}L_A^s\vth(x')-Cd,
\end{align*}
where $C = C(n,s,\vep,\Nhd) > 0$ is independent of $A$.
By taking the infimum over all $A\in\mathpzc{M}_\vep$ above and using \eqref{Delta}, we deduce that
\[
\De \vth'(x') \geq \De \vth(x')-Cd \geq\vth(x')-\phi(x')+\tau-Cd =\vth'(x')-\phi(x')+\tau-(C+1)d.
\]
Thus, by choosing $d > 0$ sufficiently small, it follows that
\[
\De \vth'(x') \geq \vth'(x') - \phi(x').
\]
This completes the proof of \eqref{one claim} and the theorem.
\end{proof}

Let $u_\vep$ and $\mathpzc{F}_\vep$ be as in Theorem~\ref{thm: eps-obstacle problem} and its proof.
First, notice that if $\vep_0\geq\vep$, then $\D^{\vep_0}u_\vep\geq \De u_\vep$.
In particular, $u_\vep \in \mathpzc{F}_{\vep_0}$. 
Hence, by the maximality of $u_{\vep_0}$ in $\mathpzc{F}_{\vep_0}$, we have that $u_{\vep_0}\geq u_{\vep}$.
In other words, the sequence of functions $u_\vep$ is decreasing as $\vep \searrow 0$. 
Let
\begin{equation}
\label{uzero}
u_0(x) = \inf_{\vep > 0} u_\vep(x)\quad\hbox{for}~x\in\R^n.
\end{equation}
Then, $u_0$ is well-defined because $\phi\leq u_\vep\leq\psi$ for every $\vep>0$.
Clearly,
\[
\phi \leq u_0 \leq \psi\quad\text{and}\quad \lim_{|x|\to \infty} (u_0-\phi)(x) = 0.
\]
Moreover, \eqref{eq: lip u_eps} and Arzel\`a--Ascoli's theorem imply that $u_0$ is the local uniform (decreasing) limit of $u_\vep$ and that $u_0$ is Lipschitz continuous with $[u_0]_{\Lip(\R^n)}\leq M_1$.

The following is one of the crucial, most delicate estimates we need.

\begin{lem}\label{lem:uzero separated}
Let $u_0$ be as in \eqref{uzero}. Then,
\begin{equation*}
u_0 > \phi.
\end{equation*}
\end{lem}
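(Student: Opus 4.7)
The plan is to argue by contradiction. Suppose there exists $x_0 \in \R^n$ with $u_0(x_0) = \phi(x_0)$. First I would establish some preliminary facts: since $\phi \leq \psi$ and $\D^\vep \phi \geq 0$ by convexity of $\phi$, $\phi \in \mathpzc{F}_\vep$ for every $\vep > 0$, so by maximality $u_\vep \geq \phi$, and hence $u_0 \geq \phi$ in $\R^n$ with equality at $x_0$. Because $\psi(x_0) > \phi(x_0) = u_0(x_0)$, continuity gives a ball $B_r(x_0)$ on which $u_0 < \psi$, and by Dini's theorem (applied to the monotone decreasing sequence $u_\vep \searrow u_0$ of continuous functions on $\overline{B_r(x_0)}$) the convergence is uniform there. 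Hence for $\vep$ small enough, $u_\vep < \psi$ throughout $B_r(x_0)$, so $\D^\vep u_\vep = u_\vep - \phi$ there. By semiconcavity of $u_\vep$ and Lemma~\ref{lem:touchandcompute}, $\D^\vep u_\vep(x_0) = \eta_\vep := u_\vep(x_0) - \phi(x_0) \searrow 0$ classically.

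The strategy is to derive two incompatible bounds on $\D u_0(x_0)$, both computed classically thanks to the semiconcavity of $u_0$ (which is Lipschitz and semiconcave as the locally uniform limit of the $u_\vep$). For the lower bound, $u_0 \geq \phi$ with equality at $x_0$ yields $\delta(u_0, x_0, y) \geq \delta(\phi, x_0, y) \geq 0$ for every $y \in \R^n$, whence $L_A^s u_0(x_0) \geq L_A^s \phi(x_0)$ for every $A \in \mathpzc{M}$. Taking the infimum,
\[
\D u_0(x_0) \geq \D \phi(x_0) \geq c_0 > 0,
\]
where the last inequality uses the strict convexity of $\phi$ near $x_0$ together with the Caffarelli--Charro quantitative positivity estimate used in their proof that $\bar{u} > \phi$ in \cite{Caffarelli-Charro}.

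For the upper bound, compactness of $\mathpzc{M}_\vep$ (the eigenvalues lie in $[\vep, \vep^{-(n-1)}]$ by $\det A = 1$) allows the selection of $A_\vep \in \mathpzc{M}_\vep$ attaining the infimum, so $L_{A_\vep}^s u_\vep(x_0) = \eta_\vep \to 0$. If a subsequence of $\{A_\vep\}$ stays in a compact subset of $\mathpzc{M}$, extracting a further subsequence we obtain $A_\vep \to A^\ast \in \mathpzc{M}$, and the dominated convergence theorem---with domination coming from the uniform semiconcavity bound $|\delta(u_\vep, x_0, y)| \leq M_2|y|^2$ near the origin and from $|\delta(u_\vep, x_0, y)| \leq |\delta(\phi, x_0, y)| + 4\|u_\vep - \phi\|_\infty$ at infinity, integrable against $|A^{-1}y|^{-n-2s}$ since $s > 1/2$---gives $L_{A_\vep}^s u_\vep(x_0) \to L_{A^\ast}^s u_0(x_0) = 0$. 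Therefore $\D u_0(x_0) \leq L_{A^\ast}^s u_0(x_0) = 0$, contradicting the lower bound.

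The main obstacle is ruling out the degenerate case in which some eigenvalue of $A_\vep$ tends to zero (and, by $\det A_\vep = 1$, another tends to infinity). My approach would be to show that the strict convexity of $\phi$ near $x_0$, via the bound $\delta(\phi, x_0, y) \geq \mu_0 |y|^2$ for $|y| < r_\phi$, forces $L_{A_\vep}^s \phi(x_0) \to \infty$: the kernel $|A_\vep^{-1}y|^{-n-2s}$ concentrates along the direction of the exploding eigenvalue, and the quadratic growth of $\delta(\phi, x_0, y)$ produces an unbounded contribution there. A parallel careful control of the ``correction term'' $L_{A_\vep}^s(u_\vep - \phi)(x_0)$---using that $u_\vep - \phi$ is nonnegative and semiconcave with $(u_\vep - \phi)(x_0) = \eta_\vep \to 0$, and splitting the integral into a region near the origin (controlled by semiconcavity) and far from the origin (controlled by Lipschitz bounds and decay at infinity)---would show this correction cannot offset the divergence of $L_{A_\vep}^s \phi(x_0)$, yielding $L_{A_\vep}^s u_\vep(x_0) \to \infty$ and contradicting $L_{A_\vep}^s u_\vep(x_0) = \eta_\vep \to 0$. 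The technical heart of the proof is this delicate interplay between the degenerating kernels and the strict convexity of $\phi$.
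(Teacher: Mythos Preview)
Your overall contradiction strategy and the lower bound $\D u_0(x_0)\geq \D\phi(x_0)>0$ are sound; the latter is essentially the content of \cite[Proposition~3.5]{Caffarelli-Charro}, which is exactly what the paper invokes. The problems are on the upper-bound side.

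First, semiconcavity of $u_\vep$ only lets you touch $u_\vep$ from \emph{above}, so Lemma~\ref{lem:touchandcompute} gives the subsolution direction $\D^\vep u_\vep(x_0)\geq u_\vep(x_0)-\phi(x_0)$, not equality. For the supersolution direction you must touch $u_\vep$ from below by a $C^2$ function at $x_0$, and nothing guarantees this: you only know $u_0(x_0)=\phi(x_0)$, while $u_\vep(x_0)>\phi(x_0)$ is possible. Without the equality you cannot conclude $L_{A_\vep}^s u_\vep(x_0)\to 0$, and the rest of your upper-bound argument collapses. The natural fix---produce points $x_\vep\to x_0$ at which a perturbation of $\phi$ touches $u_\vep$ from below, using that $\phi$ touches $u_0$ from below at $x_0$ and $u_\vep\to u_0$ uniformly---is exactly the paper's construction.

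Second, your degenerate-case sketch does not close. Semiconcavity of $u_\vep-\phi$ bounds $\delta(u_\vep-\phi,x_0,y)$ from \emph{above}, but to keep $L_{A_\vep}^s(u_\vep-\phi)(x_0)$ from drifting to $-\infty$ against the degenerating kernel you need a \emph{lower} bound. The only pointwise one available, $\delta(u_\vep-\phi,x_0,y)\geq -2\eta_\vep$, is useless near the origin because $|A_\vep^{-1}y|^{-n-2s}$ is not integrable there; your proposed splitting therefore cannot control the correction term.

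The paper avoids both obstacles by transferring the contradiction to the test function. It builds a strictly convex $\varphi\in C^{2,\sigma}$ (asymptotic to a cone) touching $u_0$ and $\phi$ from below at $x_0$, and perturbations $\varphi_\vep$ touching $u_\vep$ from below at nearby $x_\vep$. The viscosity supersolution property gives $0\leq \D\varphi_\vep(x_\vep)\leq \De\varphi_\vep(x_\vep)\leq u_\vep(x_\vep)-\phi(x_\vep)\to 0$. Because $\varphi$ is a \emph{fixed} convex, globally Lipschitz, semiconcave $C^{2,\sigma}$ function, the Caffarelli--Charro machinery applies directly to it: $\D\varphi_\vep(x_\vep)\to 0$ forces $-(-\Delta)^s_{\e_0}\varphi(x_0)\leq 0$ for some direction $\e_0$, contradicting strict convexity. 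No analysis of optimal matrices $A_\vep$ acting on the merely semiconcave functions $u_\vep$ is required.
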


\begin{proof}
Let us argue by contradiction. 
Suppose that there is a point $x_0$ such that $u_0(x_0)=\phi(x_0)$. 
Then, as $\phi<\psi$ in $\R^n$, we have $x_0 \in \{u_0<\psi\}$.
Since $\phi\in C^{2,\sigma}(\R^n)$ is strictly convex in compact sets and asymptotically close to a cone at infinity, we can find a function $\varphi\in C^{2,\sigma}(\R^n)$ that is also strictly convex in compact sets, asymptotically close to a cone at infinity, and touches both $u_0$ and $\phi$ from below in $\overline{B_r(x_0)}$ at $x_0$ for some $r>0$. 
As $u_0$ is the local uniform limit of $u_\vep$, there exist points $x_\vep\in B_r(x_0)$ such that $x_\vep\to x_0$ and $u_\vep$ can be touched from below at $x_\vep$ in $B_r(x_0)$ by 
\begin{equation}
\label{eq:varphieps}
\varphi_\vep(x) = \varphi(x) - \vep \omega(|x-x_\vep|) + d_\vep \quad\hbox{for}~x\in\R^n.
\end{equation}
Here, $d_\vep\searrow0$ and $\omega = \omega(t)$ is convex, strictly increasing in $[0,r)$, smooth in $(0,r)$, linear in $\R\setminus[0,r)$, $\omega(0^+)=\omega'(0^+)=0$, and such that $\varphi_\vep$ is strictly convex in compact sets.
Because $\varphi_{\vep}$ is convex and touches the supersolution $u_\vep$  from below at $x_\vep$,
\begin{equation}
\label{eq:casicasi}
0 \leq \D \varphi_\vep(x_\vep) \leq \De \varphi_\vep(x_\vep) \leq \varphi_\vep(x_\vep)-\phi(x_\vep) = u_\vep(x_\vep) - \phi(x_\vep).
\end{equation}
As the sets $\{ u_\vep < \psi \}$ are increasing and $x_\vep\to x_0$, we have $x_\vep\in\{u_\vep<\psi\}$ for all $\vep>0$ sufficiently small.
Moreover,
\begin{align*}
u_\vep(x_\vep) - \phi(x_\vep) \to 0\quad\hbox{as}~\vep\searrow0.
\end{align*}
This and \eqref{eq:casicasi} imply that
\begin{equation}
\label{eq:casicasicasi}
0 \leq \D \varphi_\vep(x_\vep)\to 0\quad\hbox{as}~\vep\searrow0.
\end{equation}
Using this last inequality, we will prove that there exists a direction $\e_0\in\S^{n-1}$ such that
\begin{equation}
\label{eq:fracLaplines}
-(-\Delta)^s_{\e_0}\varphi(x_0)=\frac{c_{n,s}}{2}\int_{\R}\frac{\delta(\varphi,x_0,t\e_0)}{|t|^{1+2s}}\,dt\leq0.
\end{equation}
This clearly contradicts the convexity of the nonconstant function
$\varphi$.
In turn, $u_0 > \phi$, as desired.

To deduce \eqref{eq:fracLaplines}, suppose, to the contrary, that
\begin{equation}
\label{eq:mu}
-(-\Delta)_\e^s \varphi(x_0)\geq \mu > 0 \quad\hbox{for all}~\e \in \S^{n-1}.
\end{equation}
Since the family $\{-(-\Delta)_\e^s \vphi\}_{\e \in \S^{n-1}}$ is equicontinuous (see \eqref{eqn: equicty}),
\[
\inf_{\e \in \S^{n-1}} \big{\{} -(-\Delta)_\e^s \varphi(x_\vep) \big{\}}\geq \frac{\mu}{2} > 0 \quad\hbox{for all $\vep$ sufficiently small}.
\]
The function $\omega\equiv\omega(|\cdot|)$ in \eqref{eq:varphieps} is radially symmetric and convex. Hence,
$(-\Delta)^s_\e\omega(0)$ is a negative constant independent of $\e\in\S^{n-1}$.
Therefore, we can ensure that $\vep(-\Delta)^s_\e\omega(0)\geq-\mu/4$ provided $\vep$ is
sufficiently small, independently of the direction $\e\in\S^{n-1}$.
Collecting these last two facts and \eqref{eq:varphieps}, we deduce that
\begin{equation}
\label{muover4}
-(-\Delta)_\e^s \varphi_\vep(x_\vep) = -(-\Delta)_\e^s \varphi(x_\vep)+\vep(-\Delta)^s_\e\omega(0)
\geq\frac{\mu}{4}>0,
\end{equation}
uniformly in $\e\in\S^{n-1}$ and for all $\vep$ sufficiently small.
It is show in the proof of \cite[Proposition~3.5]{Caffarelli-Charro} that an estimate
of the form \eqref{muover4} readily yields the existence of a positive constant $\tau=\tau(n,s,\mu,[\vphi]_{\Lip(\R^n)},\SC(\vphi))$
such that  $\D \varphi_\vep(x_\vep) \geq \tau$.
This estimate is uniform in $\vep$, a contradiction to \eqref{eq:casicasicasi}.
Thus, \eqref{eq:mu} cannot hold. 
In other words, there are directions $\e_k\in\S^{n-1}$ such that
\begin{equation}
\label{eq:almostthere}
-(-\Delta)_{\e_k}^s \varphi(x_0)
=\frac{c_{n,s}}{2}\int_{\R}\frac{\delta(\varphi,x_0,t\e_k)}{|t|^{1+2s}}\,dt\leq\frac{1}{k},
\end{equation}
for each $k\geq1$. 
The compactness of $\S^{n-1}$ allows us to assume, without loss of generality, that $\e_k\to\e_0$ for some $\e_0\in\S^{n-1}$,
as $k\to\infty$. The continuity of $\varphi$ gives
\[
\frac{\delta(\varphi,x_0,t\e_k)}{|t|^{1+2s}}\to \frac{\delta(\varphi,x_0,t\e_0)}{|t|^{1+2s}},
\]
as $k\to\infty$. Since $\varphi$ is convex and has linear growth at infinity,
\[
0\leq\frac{\delta(\varphi,x_0,t\e_k)}{|t|^{1+2s}}\leq
\frac{\min\{2[\varphi]_{\Lip(\R^n)}|t|,\SC(\varphi)|t|^2\}}{|t|^{n+2s}}\in L^1(\R),
\]
uniformly in $k\geq1$. 
Thus, we can apply the dominated convergence theorem to \eqref{eq:almostthere} to get
\[
-(-\Delta)_{\e_0}^s\varphi(x_0)=-\lim_{k\to\infty}(-\Delta)_{\e_k}^s\varphi(x_0)\leq0,
\]
as desired.
\end{proof}

\begin{lem}
\label{lem: u0 supersolution}
Let $u_0$ be as in \eqref{uzero}. Then,
\[
\D u_0\leq u_0-\phi\quad\hbox{in}~\{u_0<\psi\}.
\]
\end{lem}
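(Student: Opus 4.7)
The plan is to use Lemma \ref{lem:uzero separated}, which strictly separates $u_0$ from $\phi$, together with Theorem \ref{thm:our-ellipticity} to replace the degenerate operator $\D$ by a uniformly elliptic truncation $\D^\lambda$ on compact subsets of $\{u_0 < \psi\}$. Once this reduction is in place, the stability result Lemma \ref{lem:gammaconvergence} will deliver the supersolution property, and a pointwise comparison between $\D$ and $\D^\lambda$ will finish the job.

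In more detail, I would fix $x_0 \in \{u_0 < \psi\}$ and choose $r > 0$ so that $\overline{B_r(x_0)} \subset \{u_0 < \psi\}$. Setting $\eta_0 := \min_{\overline{B_r(x_0)}}(u_0 - \phi)$, which is strictly positive by Lemma \ref{lem:uzero separated}, I let $\lambda = \lambda(n, s, \eta_0, M_1, M_2) > 0$ be the constant produced by Theorem \ref{thm:our-ellipticity}. Because $u_\vep \searrow u_0$ locally uniformly and $\min_{\overline{B_r(x_0)}}(\psi - u_0) > 0$, one has $\overline{B_r(x_0)} \subset \{u_\vep < \psi\}$ for all $\vep$ sufficiently small. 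For such $\vep < \lambda$, the equation $\D^\vep u_\vep = u_\vep - \phi$ in $\{u_\vep < \psi\}$ from Theorem \ref{thm: eps-obstacle problem}, combined with $u_\vep \geq u_0$, gives $\D^\vep u_\vep \geq \eta_0 > 0$ in $B_r(x_0)$ in the viscosity sense. Theorem \ref{thm:our-ellipticity} then upgrades this to the classical identity $\D^\lambda u_\vep = \D^\vep u_\vep = u_\vep - \phi$ on $B_r(x_0)$.

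Next I apply Lemma \ref{lem:gammaconvergence} with $I = \D^\lambda$, $w_k = u_{\vep_k}$ for a sequence $\vep_k \searrow 0$, and $f_k = u_{\vep_k} - \phi$. The uniform weighted $L^1$ bound $\int |u_\vep(x)|/(1+|x|)^{n+2s}\,dx \leq C$ follows from $\phi \leq u_\vep \leq \bar u$ and the linear growth of $\phi$ and $\bar u$ at infinity (using $s > 1/2$); the supersolution inequality $\D^\lambda u_\vep \leq u_\vep - \phi$ in $B_r(x_0)$ is the classical identity just established; and the monotone local uniform convergence $u_\vep \searrow u_0$ supplies $\Gamma$-convergence, a.e.\ convergence, and local uniform convergence of $f_k$. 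The lemma therefore gives $\D^\lambda u_0 \leq u_0 - \phi$ in $B_r(x_0)$ in the viscosity sense. Since $\mathpzc{M}_\lambda \subset \mathpzc{M}$, for any $C^2$ test function $\vth$ one has pointwise $\D \vth(x) \leq \D^\lambda \vth(x)$, so the viscosity supersolution property transfers to $\D$ itself: $\D u_0 \leq u_0 - \phi$ in $B_r(x_0)$. As $x_0 \in \{u_0 < \psi\}$ was arbitrary, the lemma follows.

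The main obstacle is the non-uniform ellipticity of $\D$, which would make a direct stability argument for viscosity supersolutions fail, since a minimizing sequence $A_\vep \in \mathpzc{M}_\vep$ in the infimum defining $\D^\vep$ can degenerate as $\vep \searrow 0$ and the class $\mathpzc{M}$ is not compact. Lemma \ref{lem:uzero separated} is precisely the ingredient that bypasses this difficulty: the strict separation $u_0 > \phi$ forces $\D^\vep u_\vep$ to stay uniformly bounded below by a positive constant on compact subsets of $\{u_0 < \psi\}$, at which point Theorem \ref{thm:our-ellipticity} collapses $\D^\vep$ onto the uniformly elliptic $\D^\lambda$, reducing the limit passage to the standard stability Lemma \ref{lem:gammaconvergence}.
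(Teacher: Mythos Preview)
Your proposal is correct and follows essentially the same route as the paper: use Lemma~\ref{lem:uzero separated} to get a uniform positive lower bound for $u_\vep-\phi$ on a neighborhood contained in $\{u_0<\psi\}$, invoke Theorem~\ref{thm:our-ellipticity} to replace $\D^\vep$ by the uniformly elliptic $\D^\lambda$ on the approximating solutions, and then pass to the limit via Lemma~\ref{lem:gammaconvergence}. The only cosmetic difference is that the paper argues pointwise at a single test point (and also applies Theorem~\ref{thm:our-ellipticity} to $u_0$ to get $\D u_0(x_0)=\D^\lambda u_0(x_0)$), whereas you work on a whole ball and finish with the elementary inequality $\D\vth\le\D^\lambda\vth$ for test functions; your version is arguably a bit cleaner in this last step.
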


\begin{proof}
Let $x_0 \in \{ u_0 < \psi \}$ be a point at which
$u_0$ can be touched from below by a $C^2$ function in a neighborhood $\Nhd\subset\subset\{u_0<\psi\}$.
As $u_\vep$ decreases locally uniformly to $u_0$, we can find a sequence of points $x_{\vep}\to x_0$ and $C^2$ functions $P_{\vep}$ that touch $u_{\vep}$ from below at $x_{\vep}$ in a common neighborhood $\Nhd'\subset\subset\Nhd$.
By Lemma \ref{lem:uzero separated}, we have $u_\vep \geq u_0>\phi$.
Then, by Theorem \ref{thm:our-ellipticity}, there exists $\lambda > 0$ such that
\[
\D u_0(x_0)=\D^\lambda u_0(x_0)\quad\text{and}\quad\D^{\vep}u_{\vep}(x_{\vep})
= \D^\lambda u_{\vep}(x_{\vep})
\]
for every $\vep$ such that $\vep<\lambda$. 
Now, since $u_\vep$ is a supersolution in $\{u_\vep<\psi\}$ (see Theorem~\ref{thm: eps-obstacle problem}) that can be touched from below by a $C^2$ function at $x_\vep$ in $\Nhd'$, we can apply Lemma \ref{lem:touchandcompute}. Consequently,
\begin{equation}
\label{alpha}
\D^\lambda u_{\vep}(x_{\vep}) = \D^{\vep} u_{\vep}(x_{\vep})
\leq u_{\vep}(x_{\vep})-\phi(x_{\vep}).
\end{equation}
By Lemma~\ref{lem:gammaconvergence},
\[
\lim_{\vep\to0}\D^\lambda u_{\vep}(x_{\vep})=\D^\lambda u_0(x_0).
\]
Thus, by letting $\vep\to0$ in \eqref{alpha},
\[
\D u_0(x_0) = \D^\lambda u_0(x_0) \leq u_0(x_0) -\phi(x_0).
\]
\end{proof}

With the following result we are able to conclude the proof of Theorem \ref{thm: solution of obstacle problem}.

\begin{lem}\label{lem:u equals uzero}
Let $u$ and $u_0$ be as in \eqref{eq: u} and \eqref{uzero}, respectively.
Then,
\[ 
u=u_0.
\]
\end{lem}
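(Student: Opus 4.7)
The plan is to prove the two inequalities $u\leq u_0$ and $u_0\leq u$ separately, each by a maximality argument.

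For the easy direction $u\leq u_0$, I would note that since $\mathpzc{M}_\vep\subset\mathpzc{M}$, the pointwise inequality $\D^\vep w\geq \D w$ holds for every function $w$. Consequently $\mathpzc{F}\subset\mathpzc{F}_\vep$, so in particular $u\in\mathpzc{F}_\vep$. The maximality of $u_\vep$ in $\mathpzc{F}_\vep$ (Theorem \ref{thm: eps-obstacle problem}) then forces $u\leq u_\vep$ for every $\vep>0$, and taking the infimum yields $u\leq u_0$.

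For the reverse inequality the plan is to show $u_0\in\mathpzc{F}$ and then invoke the maximality of $u$ in $\mathpzc{F}$. Three of the four defining conditions of $\mathpzc{F}$ are essentially free: $u_0$ is Lipschitz continuous (hence USC) as the locally uniform decreasing limit of the uniformly Lipschitz $u_\vep$; $u_0\leq\psi$ since each $u_\vep\leq\psi$; and the squeeze $0\leq u_0-\phi\leq u_\vep-\phi$ combined with $(u_\vep-\phi)(x)\to 0$ at infinity forces the same decay for $u_0-\phi$. The hard part will be the viscosity subsolution inequality
\[
\D u_0\geq u_0-\phi\quad\hbox{in}~\R^n.
\]

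The degenerate ellipticity of $\D$ obstructs any direct stability argument passing $\D^\vep u_\vep\geq u_\vep-\phi$ to the limit, so the plan is to factor through the individual linear operators $L_A^s$ that build up $\D$. I would fix an arbitrary $A\in\mathpzc{M}$ and let $\vep_A>0$ denote its smallest eigenvalue, so that $A\in\mathpzc{M}_\vep$ whenever $\vep<\vep_A$. For such $\vep$, the pointwise inequality $L_A^s\geq \D^\vep$ (as $A$ is one of the admissible matrices in the infimum defining $\D^\vep$) transfers the subsolution property $\D^\vep u_\vep\geq u_\vep-\phi$ into
\[
L_A^s u_\vep\geq u_\vep-\phi\quad\hbox{in}~\R^n
\]
in the viscosity sense. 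Since $L_A^s$ is one of the admissible operators $I$ in Lemma \ref{lem:gammaconvergence}, and $u_\vep\to u_0$ locally uniformly as $\vep\searrow 0$ (in particular in the $\Gamma$ sense and a.e.), with $u_\vep-\phi\to u_0-\phi$ locally uniformly, the subsolution analog of Lemma \ref{lem:gammaconvergence} yields $L_A^s u_0\geq u_0-\phi$ in $\R^n$ in the viscosity sense. Since $A\in\mathpzc{M}$ was arbitrary and the viscosity subsolution property is preserved under the pointwise infimum over the family $\{L_A^s\}_{A\in\mathpzc{M}}$ (at any $C^2$ test function touching $u_0$ from above at $x_0$, the associated $\vth$ satisfies $L_A^s\vth(x_0)\geq\vth(x_0)-\phi(x_0)$ for every $A$, hence so does $\D\vth(x_0)=\inf_A L_A^s\vth(x_0)$), we conclude $\D u_0\geq u_0-\phi$. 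This places $u_0\in\mathpzc{F}$, so $u_0\leq u$, completing the proof.
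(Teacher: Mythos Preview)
Your proposal is correct and follows essentially the same approach as the paper: both directions are handled by maximality, with $u\leq u_\vep$ coming from $\D^\vep\geq\D$ (so $u\in\mathpzc{F}_\vep$), and $u_0\in\mathpzc{F}$ established by fixing $A\in\mathpzc{M}$, using $L_A^s u_\vep\geq\D^\vep u_\vep\geq u_\vep-\phi$ for $\vep$ below the least eigenvalue of $A$, and passing to the limit via the subsolution version of Lemma~\ref{lem:gammaconvergence}. The paper's proof is slightly terser but the logic is identical.
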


\begin{proof}
Let us show that $u_0\in\mathpzc{F}$. 
Let $A\in\mathpzc{M}$. 
If $\vep$ is smaller than the minimum eigenvalue of $A$, then $A\in\mathpzc{M}_\vep$, and, as such, we have $L_A^s u_\vep\geq\De u_\vep \geq u_\vep - \phi$ in $\R^n$.
Therefore, by Lemma \ref{lem:gammaconvergence}, we find that $L_A^s u_0 \geq u_0 - \phi$ in $\R^n$. 
As $A\in\mathpzc{M}$ was arbitrary, it follows that 
\[
\D u_0 \geq u_0 - \phi \quad\hbox{in}~\R^n.
\]
Therefore, $u_0\in\mathpzc{F}$ and
$u_0\leq u$.
For the opposite inequality, observe that
\[
\De u \geq \D u \geq u - \phi \quad\hbox{in}~\R^n.
\]
Whence, $u \in \mathpzc{F}_\vep$ for all $\vep > 0$, and
by the maximality of $u_\vep$ in $\mathpzc{F}_\vep$, $u \leq u_\vep$ for all $\vep > 0$.
Thus, from the definition of $u_0$, we determine that
$u \leq u_0$.
\end{proof}

\begin{proof}[Proof of Theorem \ref{thm: solution of obstacle problem}]
The proof follows from Lemmas~\ref{lem: u subsolution}--\ref{lem:classical soln} and Lemmas~\ref{lem:uzero separated}--\ref{lem:u equals uzero}.
\end{proof}

%%%%%%%%%%%%%%%%%%%%%%%%%%%%%%%%%%%%%%%%%%%%%%%%%%%%%%
\section{Proof of Theorem \ref{thm:regularity}}
\label{sec: higher reg}
%%%%%%%%%%%%%%%%%%%%%%%%%%%%%%%%%%%%%%%%%%%%%%%%%%%%%%

We first prove Theorem~\ref{thm:regularity}(1), 
that is, the local H\"older continuity of $\nabla u$ in the noncoincidence set.

\begin{proof}[Proof of Theorem \ref{thm:regularity}(1)]
Let $\mathcal{O}$ and $\mathcal{O}_\delta$ be as in the statement.
Since $u>\phi$ in $\R^n$, by Theorem~\ref{thm:our-ellipticity}, there exists
$\lambda=\lambda(n,s,\inf_{\mathcal{O}_\delta}(u-\phi),M_1,M_2)>0$ such that
\[
\D u(x)=\D^\lambda u(x)\quad\hbox{for all}~x\in\mathcal{O}_\delta.
\]
For any $A\in\mathpzc{M}_\lambda$, let
\[
b_A(x)=L_A^s\phi(x)-(u-\phi)(x).
\]
Since $u$ and $\phi$ are Lipschitz, $\phi \in C^{2,\sigma}(\R^n)$ and satisfies \eqref{M0}, and $\lim_{|x|\to \infty} (u-\phi)(x) = 0$,
we deduce that
\begin{equation}
\label{eq:cA}
\sup_{A\in\mathpzc{M}_\lambda}\big{\{}\| b_A \|_{L^\infty(\mathcal{O}_\delta)} + [b_A]_{\Lip(\mathcal{O}_\delta)}\big{\}}\leq C_0,
\end{equation}
where $C_0 = C_0(n,s,\lambda,M_0,M_1,\SC(\phi)) > 0$.
Let $w=u-\phi$.  We have
\[
L_A^sw+b_A(x)=L_A^su-(u-\phi)\quad
\hbox{in}~\mathcal{O}_\delta.
\]
By taking the infimum over all $A\in\mathpzc{M}_\lambda$ above, we see that $w$ solves
\[
\begin{cases}
\displaystyle\inf_{A\in\mathpzc{M}_\lambda}\big\{L_A^sw+b_A(x)\big\}=0&\hbox{in}~\mathcal{O}_\delta\\
w=u-\phi\in L^\infty(\R^n),
\end{cases}
\]
with $b_A$ satisfying the uniform estimate \eqref{eq:cA}.
From Theorem 1.3(b) in \cite{Serra}, the conclusion follows.
\end{proof}

Next we prove Theorem~\ref{thm:regularity}(2), which establishes that $\nabla u$ is H\"older continuous across the free boundary.
Recall that the contact set $\{u=\psi\}$ is compact, see Lemma~\ref{lem:compact}.
Let $\mathcal{B}$ be as in the statement.
Set $C\mathcal{B}=\{Cx:x\in\mathcal{B}\}$ with $C > 0$.
By Theorem~\ref{thm:our-ellipticity}, there exists
\begin{equation}
\label{lambda}
\lambda=\lambda(n,s,\inf_{4\mathcal{B}}(u-\phi),M_1,M_2)>0
\end{equation}
such that
\begin{equation}\label{eq:3B}
\D u=\D^\lambda u\quad\hbox{in}~4\mathcal{B}.
\end{equation}
Define
\begin{equation}\label{eq:cAx}
c_A(x)=(u-\phi)(x)-L_A^s\psi(x).
\end{equation}
Since
$\sup_{A\in\mathpzc{M}_\lambda}[L_A^s\psi]_{\Lip(\R^n)}<\infty$
and $u,\phi\in\Lip(\R^n)$, up to dividing by a constant depending on $\lambda$, 
we can assume that
\begin{equation}\label{lipca}
\sup_{A\in\mathpzc{M}_\lambda}[c_A]_{\Lip(\R^n)}=1.
\end{equation}
We subtract the obstacle and let $v$ be as in \eqref{eq:psi minus u}.
For any $A\in\mathpzc{M}$, we have
\[
L_A^sv+c_A(x)=-L_A^su+(u-\phi),
\]
so that
\[
\sup_{A\in\mathpzc{M}_\lambda}\big{\{}L_A^sv+c_A(x)\big{\}}=-\D^\lambda u+(u-\phi).
\]
Therefore, from \eqref{eq:3B} and up to dividing $v$ by a normalizing constant depending on $\lambda$, we get
\begin{equation}\label{eq:obstacle v}
\begin{cases}
v\geq0&\hbox{in}~\R^n\\
D^2v(x)\geq -\Id&\hbox{for a.e.}~x\in \R^n \\
\sup_{A\in\mathpzc{M}_\lambda}\big{\{}L_A^sv(x)+c_A(x)\big{\}}=0&\hbox{in}~\{v>0\}\cap 4\mathcal{B} \\
\sup_{A\in\mathpzc{M}_\lambda}\big{\{}L_A^sv(x)+c_A(x)\big{\}}\leq 0&\hbox{in}~\R^n \\
|\nabla v(x)|\leq 1&\hbox{for a.e.}~x\in\R^n.
\end{cases}
\end{equation}
Finally, consider the extremal Pucci operators
\[
M^+_\lambda w(x)=\sup_{A\in\mathpzc{M}_\lambda}L_A^sw(x)
\quad\hbox{and}\quad
M^-_\lambda w(x)=\inf_{A\in\mathpzc{M}_\lambda}L_A^sw(x).
\]
To prove Theorem \ref{thm:regularity}(2),
we need the following rescaled
version of a regularity result from \cite{Caffarelli-Ros-Oton-Serra}.

\begin{prop}
\label{Proposition 2.4}
Let $\alpha\in(0,s)$, $1+s+\alpha<2$,  $K>0$, and $R\geq1$. If $w$ satisfies
\[
\begin{cases}
w\geq0&\hbox{in}~\R^n\\
D^2w(x)\geq -K\Id&\hbox{for a.e.}~x\in B_{2R} \\
M^+_\lambda(w-w(\cdot-h))\geq -K|h|&\hbox{in}~\{w>0\}\cap B_R \\
|\nabla w(x)|\leq K(1+|x|^{s+\alpha})&\hbox{for a.e.}~x\in\R^n,
\end{cases}
\]
then there exist $0<\tau<1$ and $C>0$, depending on $\alpha$ and
$\lambda$, such that
\[
\|w\|_{L^\infty(B_{R/2})}+R\|\nabla w\|_{L^\infty(B_{R/2})}+R^{1+\tau}[\nabla w]_{C^\tau(B_{R/2})}
\leq CKR^2.
\]
\end{prop}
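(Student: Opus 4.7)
The plan is to prove this estimate by rescaling $w$ from $B_R$ to $B_1$ and then invoking the unit-scale $C^{1,\tau}$ regularity result of Caffarelli--Ros-Oton--Serra, see \cite[Proposition~2.4]{Caffarelli-Ros-Oton-Serra}. The choice of amplitude in the rescaling is guided by the Hessian hypothesis: for the model quadratic $w(x)=K|x|^2/2$, all three quantities on the left-hand side of the conclusion are of order $KR^2$, so I would set
\[
\tilde w(x) = \frac{w(Rx)}{KR^2}\quad\hbox{for}~x\in\R^n.
\]

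Verification of the rescaled hypotheses would proceed directly from the four hypotheses on $w$: $\tilde w \geq 0$ and $D^2\tilde w(x) = K^{-1}D^2w(Rx) \geq -\Id$ on $B_2$ are immediate. Using the scaling identity $L_A^s[w(R\cdot)](x) = R^{2s}(L_A^s w)(Rx)$ valid for operators of order $2s$, the Pucci condition transforms into
\[
M^+_\lambda\big(\tilde w - \tilde w(\cdot-h)\big)(x) \geq -R^{2s-1}|h| \quad\hbox{in}~\{\tilde w > 0\}\cap B_1.
\]
Finally, because $R\geq 1$ and $s+\alpha < 1$, the gradient condition becomes
\[
|\nabla \tilde w(x)|\leq R^{-1}+R^{s+\alpha-1}|x|^{s+\alpha}\leq 1+|x|^{s+\alpha} \quad\hbox{on}~\R^n.
\]

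Once the unit-scale estimate yields $\|\tilde w\|_{C^{1,\tau}(B_{1/2})}\leq C$ for $\tau$ and $C$ depending only on $\alpha$ and $\lambda$, I would reverse the rescaling via the identities
\[
\|w\|_{L^\infty(B_{R/2})} = KR^2\|\tilde w\|_{L^\infty(B_{1/2})},\quad \|\nabla w\|_{L^\infty(B_{R/2})} = KR\|\nabla\tilde w\|_{L^\infty(B_{1/2})},
\]
\[
[\nabla w]_{C^\tau(B_{R/2})} = KR^{1-\tau}[\nabla\tilde w]_{C^\tau(B_{1/2})},
\]
to recover each of the three terms, bounded by $CKR^2$.

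The main obstacle is the rescaled Pucci inequality, where the constant $R^{2s-1}$ strictly exceeds $1$ since $s>1/2$ and $R\geq 1$; the amplitude $KR^2$ that is natural for the Hessian bound does not rescale the Pucci hypothesis cleanly. To handle this, one must invoke a form of the Caffarelli--Ros-Oton--Serra estimate that tracks the linear dependence of the constant on the Pucci datum, which follows from inspecting the linearity of their argument in the data. This linear dependence, together with the sub-unit bound $s+\alpha<1$ controlling the gradient factor, permits absorbing the excess factor into the final constant $C=C(\alpha,\lambda)$ of the conclusion.
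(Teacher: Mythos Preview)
Your approach is exactly what the paper intends: it gives no proof of the proposition, merely describing it as ``the following rescaled version of a regularity result from \cite{Caffarelli-Ros-Oton-Serra},'' and your rescale-and-apply strategy is the natural reading of that phrase. Your verification of how the four hypotheses transform under $\tilde w(x)=w(Rx)/(KR^2)$ is correct, and you rightly isolate the one nontrivial point: the Pucci constant becomes $R^{2s-1}>1$ because $s>1/2$.

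Your final paragraph, however, does not actually close this gap. Even if the unit-scale Caffarelli--Ros-Oton--Serra estimate depends linearly on the data, the surplus factor $R^{2s-1}$ depends on $R$ and therefore cannot be ``absorbed into the final constant $C=C(\alpha,\lambda)$.'' The inequality $s+\alpha<1$ only tames the gradient hypothesis; it does nothing for the Pucci one. What your rescaling genuinely delivers is the bound $CKR^{1+2s}$ rather than $CKR^{2}$. (The same conclusion is reached more cleanly with the alternative normalization $\tilde w(x)=w(Rx)/(KR^{1+2s})$: then all four hypotheses hold with constant $1$, since $\alpha<s$ forces $R^{\alpha-s}\leq1$ in the gradient bound, and undoing the scaling again produces $CKR^{1+2s}$.) Since $1+2s>2$, this is strictly weaker for $R>1$.

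For the paper's purposes the discrepancy is immaterial: both uses of the proposition---in the proofs of Theorem~\ref{thm:regularity}(2) and Theorem~\ref{thm:blow up}---involve a fixed ball $\mathcal{B}$ or a fixed $R$, so any finite power of $R$ suffices. If one truly wants the stated exponent $R^2$, a black-box rescaling of \cite[Proposition~2.4]{Caffarelli-Ros-Oton-Serra} will not yield it; one would have to reopen that proof and track a more refined dependence on the separate constants.
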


\begin{proof}[Proof of Theorem \ref{thm:regularity}(2)]
Fix $\mathcal{B}$ as in the statement. Let $\lambda$ be as in \eqref{lambda},
and let $v$ be as in \eqref{eq:psi minus u}. Observe that, by \eqref{eq:obstacle v} and \eqref{lipca},
\begin{align*}
M^+_\lambda(v-v(\cdot-h))(x) &\geq (L_A^sv+c_A)(x)-(L_A^sv+c_A)(x-h)-(c_A(x)-c_A(x-h)) \\
&\geq(L_A^sv+c_A)(x)-|h|,
\end{align*}
which gives
\[
M^+_\lambda(v-v(\cdot-h)) \geq\sup_{A\in\mathpzc{M}_\lambda}\big{\{}L_A^sv+c_A(x)\big{\}}-|h|=-|h| \quad\hbox{in}~\{v>0\}\cap 2\mathcal{B}.
\]
With this and \eqref{eq:obstacle v}, we can apply Proposition \ref{Proposition 2.4} and conclude that $v\in C^{1,\tau}(\mathcal{B})$,
with the corresponding estimate.
\end{proof}

%%%%%%%%%%%%%%%%%%%%%%%%%%%%%%%%%%%%%%%%%%%%%%%%%%%%%%
\section{Proof of Theorem \ref{thm:blow up}}
\label{Section:blow up}
%%%%%%%%%%%%%%%%%%%%%%%%%%%%%%%%%%%%%%%%%%%%%%%%%%%%%%

In order to prove Theorem \ref{thm:blow up}, we consider $v=\psi-u$ as in \eqref{eq:psi minus u}. 
We showed, in section \ref{sec: higher reg}, that $v$
satisfies the locally uniformly elliptic obstacle problem \eqref{eq:obstacle v}
with ellipticity constants $\lambda >0$ (as defined in \eqref{lambda}) and $1/\lambda^{(n-1)(n+2s)}$.
Before proceeding with the proof, we define regular free boundary points,
the constant $\bar{\alpha}>0$, and the rescalings we will use to determine the blow up sequence,
by following \cite{Caffarelli-Ros-Oton-Serra}.

\begin{defn}
\label{defn:regular}
Let $\nu:(0,\infty)\to(0,\infty)$ be a nonincreasing function with
\[
\lim_{r\searrow0}\nu(r)=\infty.
\]
We say that a free boundary point $x_0\in\partial\{v>0\}$ is \textit{regular with modulus $\nu$} if
\[
\sup_{\rho\geq r}\frac{\sup_{B_{\rho}(x_0)}v}{\rho^{1+s+\alpha}}\geq\nu(r)
\]
for some $\alpha\in(0,s)$ such that
\[
1+s+\alpha<2.
\]
\end{defn}

\begin{defn}
Let $\bar{\alpha}=\bar{\alpha}(n,s,\lambda)>0$ be the minimum of the following three constants:
\begin{itemize}
\item[-] The $\alpha>0$ of the interior $C^\alpha$ estimate given by \cite[Theorem~11.1]{Caffarelli-Silvestre-CPAM};
\item[-] The $\alpha>0$ of the boundary $C^\alpha$ estimate for $u/d^s$ given by \cite[Proposition 1.1]{Ros-Oton-Serra-Duke};
\item[-] The $\alpha>0$ of the interior $C^{2s+\alpha}$ estimate for convex equations given by \cite[Theorem 1.1]{Caffarelli-Silvestre-ARMA}
and \cite[Theorem 1.1]{Serra}.
\end{itemize}
\end{defn}

Without loss of generality and for the rest of this section, we assume that $x_0=0$ is a regular free boundary point
with modulus $\nu$. In the case $1+s+\alpha\geq 2s+\bar{\alpha}$, we further assume that
\[
\liminf_{\rho \searrow 0} \frac{|\{v =0\} \cap B_\rho|}{|B_\rho|} > 0.
\]
In particular, there exists $c_0>0$ such that 
\begin{equation}\label{eq:positive density}
\frac{|\{v =0\} \cap B_\rho|}{|B_\rho|} \geq c_0> 0\quad\hbox{for all}~\rho~\hbox{sufficiently small}.
\end{equation}

For $r>0$, we define the rescalings
\begin{equation}
\label{eq:vr}
v_r(x)=\frac{v(rx)}{r^{1+s+\alpha}\theta(r)}\quad\hbox{for}~x\in\R^n
\end{equation}
where
\[
\theta(r)=\sup_{\rho\geq r}\frac{\|\nabla v\|_{L^\infty(B_\rho)}}{\rho^{s+\alpha}}.
\]
Then, $\theta$ is nonincreasing and $\theta(r)\geq\nu(r)$ for all $r>0$, see \cite[Lemma~5.4]{Caffarelli-Ros-Oton-Serra}.

\begin{proof}[Proof of Theorem \ref{thm:blow up}]
We first prove that for any $R>0$,
$\|v_r\|_{C^{1,\tau}(B_R)}$ is uniformly bounded for all $r>0$ sufficiently small,
where $\tau\in(0,1)$ is as in Proposition \ref{Proposition 2.4}.
Observe that
\begin{equation}
\label{v1}
v_r\geq0\quad\hbox{in}~\R^n,
\end{equation}
and, since $1+s+\alpha<2$ and $\theta(r)\geq\nu(r)$, for all $r<1$,
\begin{equation}
\label{v2}
D^2v_r(x)=\frac{r^2}{r^{1+s+\alpha}\theta(r)}D^2v(rx)\geq
-\frac{1}{\nu(r)}\Id\quad\hbox{for a.e.}~x\in\R^n.
\end{equation}
Let $c_A(x)$ be as in \eqref{eq:cAx} and define
\[
c_{A,r}(x)=\frac{c_A(rx)}{r^{1-s+\alpha}\theta(r)}.
\]
Then, for every $r<1$, since $\alpha\in(0,s)$ and \eqref{lipca} holds,
\[
[c_{A,r}]_{\Lip(\R^n)}\leq \frac{r^{s-\alpha}}{\theta(r)}[c_A]_{\Lip(\R^n)}\leq \frac{1}{\nu(r)}.
\]
Hence, following the proof of Theorem \ref{thm:regularity}(2) in section \ref{sec: higher reg}, we see that
\begin{equation}
\label{v3}
M^+_\lambda(v_r-v_r(\cdot-h))(x)\geq -\frac{|h|}{\nu(r)}\quad\hbox{for all}~x\in\{v_r>0\}\cap B_{4R}
\end{equation}
and for all $r<1$ sufficiently small. Finally,
\begin{equation}
\label{v4}
\|\nabla v_r\|_{L^\infty(B_R)}=\frac{R^{s+\alpha}\|\nabla v\|_{L^\infty(B_{Rr})}}{(Rr)^{s+\alpha}\theta(r)}\leq R^{s+\alpha},
\end{equation}
which implies that
\[
|\nabla v_r(x)| \leq 2(1+|x|^{s+\alpha})\quad\hbox{for all}~x\in\R^n.
\]
Thus, by Proposition \ref{Proposition 2.4}, $\|v_r\|_{C^{1,\tau}(B_R)}$ is uniformly bounded for all $r>0$ sufficiently small.

Next, for any $k\geq1$, we choose
\begin{equation}
\label{eqn:rk}
r_k\geq\frac{1}{k}
\end{equation}
such that
\begin{equation}
\label{eq:rk}
\frac{\|\nabla v\|_{L^\infty(B_{r_k})}}{r_k^{s+\alpha}}\geq\frac{1}{2}\theta(1/k)\geq \frac{1}{2}\theta(r_k).
\end{equation}
Since $\|\nabla v\|_{L^\infty(\R^n)}\leq 1$ and $\theta(1/k)\geq\nu(1/k)\to\infty$ as $k\to\infty$,
we have that $r_k\to0$ as $k\to\infty$. In addition, from \eqref{eq:rk} it follows that
\begin{equation}
\label{v5}
\|\nabla v_{r_k}\|_{L^\infty(B_1)}\geq\frac{1}{2}.
\end{equation}
Moreover, in the case $1+s+\alpha\geq 2s+\bar{\alpha}$, by \eqref{eq:positive density},
\begin{equation}
\label{v6}
\frac{|\{v_{r_k} =0\} \cap B_\rho|}{|B_\rho|} = \frac{|\{v =0\} \cap B_{r_k\rho}|}{|B_{r_k\rho}|}\geq c_0 > 0\quad\hbox{for all}~\rho~\hbox{sufficiently small}.
\end{equation}
By Arzel\`a--Ascoli's theorem and a standard diagonal argument, there exists $v_0\in C^1_{\mathrm{loc}}(\R^n)$
such that
\[
v_{r_k}\to v_0\quad\hbox{in}~C^1_{\mathrm{loc}}(\R^n).
\]
Additionally, by \eqref{v1} and \eqref{v2},
\[
\begin{cases}
v_0\geq0&\hbox{in}~\R^n\\
D^2v_0(x)\geq 0&\hbox{for a.e.}~x\in\R^n,
\end{cases}
\]
and, by \eqref{v4} and \eqref{v5},
\[
\frac{1}{2}\leq\|\nabla v_0\|_{L^\infty(B_R)}\leq R^{s+\alpha}\quad\hbox{for all}~R\geq1.
\]
Also, in the case $1+s+\alpha\geq 2s+\bar{\alpha}$, from \eqref{v6}, we find that
\[
\liminf_{\rho\searrow0}\frac{|\{v_0 =0\} \cap B_\rho|}{|B_\rho|}\geq c_0>0.
\]
Next, as $R>0$ in \eqref{v3} was arbitrary, by passing to the limit as $|h|\to0$ and $r=r_k\to0$, we get
\[
M^+_\lambda(\partial_\e v_0)\geq 0\quad\hbox{in}~\{v_0>0\}
\]
and for all $\e\in\mathbb{S}^{n-1}$.
Furthermore, by arguing as we did to obtain \eqref{v3}, for any $R>0$
and any nonnegative probability measure $\mu$ with compact support, we find that
\[
M^+_\lambda\bigg(v_r-\int v_r(\cdot-h)\,d\mu(h)\bigg)\geq-\frac{|h|}{\nu(r)}\quad\hbox{in}~\{v_r>0\}\cap B_{R}
\]
provided $r$ is sufficiently small. As a consequence,
\[
M^+_\lambda\bigg(v_0-\int v_0(\cdot-h)\,d\mu(h)\bigg)\geq0\quad\hbox{in}~\{v_0>0\}.
\]
Therefore, applying the classification results in \cite[Theorems~7.1,~7.2]{Caffarelli-Ros-Oton-Serra} to $v_0$, we finally
obtain
\[
v_0(x)=K_0(\e_0\cdot x)^{1+s}_+,
\] 
for some $1/4\leq K_0\leq 1$ and $\e_0\in\S^{n-1}$, as desired.
\end{proof}

%%%%%%%%%%%%%%%%%%%%%%%%%%%%%%%%%%%%%%%%%%%%%%%%%%%%%%
\section{Proof of Theorem \ref{thm:2}}
\label{Section:fb}
%%%%%%%%%%%%%%%%%%%%%%%%%%%%%%%%%%%%%%%%%%%%%%%%%%%%%%

As in section~\ref{Section:blow up}, we assume, without loss of generality, that $x_0 = 0$ is a regular free boundary point with modulus $\nu$.

\begin{proof}[Proof of Theorem \ref{thm:2}]
Consider $v_r$ as in \eqref{eq:vr}, for $r = r_k$ given by \eqref{eqn:rk}.
From Theorem \ref{thm:blow up} and its proof, we have that given any 
$\delta_0>0$ and $R_0\geq1$, there exists $r_0=r_0(\delta_0,R_0,\alpha,\nu,\lambda)\in(0,1)$ such that
for all $0<r_k<r_0$,
\[
\begin{cases}
M^+_\lambda(\partial_\e v_{r_k})\geq -\delta_0&\hbox{in}~\{v_{r_k}>0\}\cap B_{R_0}\\
M^-_\lambda(\partial_\e v_{r_k})\leq\delta_0&\hbox{in}~\{v_{r_k}>0\}\cap B_{R_0},
\end{cases}
\]
for all $\e\in\mathbb{S}^{n-1}$, and
\[
|v_{r_k}(x)-K_0(\e_0\cdot x)^{1+s}_+|+|\nabla v_{r_k}(x)-(1+s)K_0(\e_0\cdot x)^{s}_+\e_0|<\delta_0\quad\hbox{for every}~x\in B_{R_0}.
\]
In addition, for every $R\geq1$, we have
\[
\|\nabla v_{r_k}\|_{L^\infty(B_{R})}\leq R^{s+\alpha},
\]
for all $k\geq1$, see \eqref{v4}. With these estimates in hand,
we can argue exactly as in sections 8.2 and 8.3 of \cite{Caffarelli-Ros-Oton-Serra}
and deduce that Theorem \ref{thm:2} holds.
\end{proof}

\medskip

\noindent\textbf{Acknowledgements.} We are grateful to Luis A.~Caffarelli for posing this problem to us.

%%%%%%%%%%%%%%%%%%%%%%%%%%%%%%%%%%%%%%%%%%%%%%%%%%%%%%

%%%%%%%%%%%%%%%%%%%%%%%%%%%%%%%%%%%%%%%%%%%%%%%%%%%%%%

%%%%%%%%%%%%%%%%%%%%%%%%%%%%%%%%%%%%%%%%%%%%%%%%%%%%%%
\end{document}